\renewcommand{\arraystretch}{2}	
\newcommand{\SELF}[1]{\todo[color=green!40]{#1}} 
\newcommand{\OMIT}[1]{\todo[color=gray!30]{#1}}  
\newcommand{\CITE}[1]{\todo[color=cyan!30]{#1}}  
\newtheorem{theorem}{Theorem}[section]
\newtheorem{proposition}[theorem]{Proposition}
\newtheorem{corollary}[theorem]{Corollary}
\newtheorem{lemma}[theorem]{Lemma}
\theoremstyle{definition}
\newtheorem{definition}[theorem]{Definition} 
\newtheorem{example}[theorem]{Example}
\declaretheoremstyle[
spaceabove=6pt, spacebelow=6pt,
headfont=\normalfont\bfseries,
notefont=\normalfont\bfseries, 
notebraces={}{},
bodyfont=\normalfont\itshape
]{Estilo1}
\newcommand\inner[1] 		{\langle #1 \rangle}
\def\N{\mathds{N}}
\def\Z{\mathds{Z}}
\def\R{\mathds{R}}
\def\C{\mathds{C}}
\def\F{\mathds{F}}
\def\PR{\mathds{P}}
\def\I{\mathcal{I}}
\def\WW{\mathcal{W}}
\def\LL{\mathcal{L}}
\def\II{\mathcal{I}}		
\def\CC{\mathcal{C}}
\def\AA{\mathbf{A}}
\def\BB{\mathbf{B}}
\def\CC{\mathbf{C}}
\def\PP{\mathbf{P}}
\def\ii{\mathbf{i}}				
\def\jj{\mathbf{j}}				
\def\im	 			{\mathrm{i}}
\def\Plucker		{Pl\"{u}cker}
\def\pperp		{\simperp}
\DeclareMathOperator{\Span}{span}
\DeclareMathOperator{\Ann}{Ann}
\DeclareMathOperator{\Tr}{Tr}
\DeclareMathOperator{\Hom}{Hom}
\DeclareMathOperator{\diam}{diam}
\DeclareMathOperator{\area}{area}
\def\lcontr		{\lrcorner\,}				
\def\wrt					{w.r.t.\ }
\def\ie						{i.e.\ }
\def\eg						{e.g.\ }
\begin{document}

\title{Asymmetric Metrics on the Full Grassmannian of Subspaces of Different Dimensions}

\author{Andr\'e L. G. Mandolesi 
               \thanks{Instituto de Matemática e Estatística, Universidade Federal da Bahia, Av. Adhemar de Barros s/n, 40170-110, Salvador - BA, Brazil. E-mail: \texttt{andre.mandolesi@ufba.br}}}
               
\date{\today}

\maketitle

\abstract{
Metrics on Grassmannians have a wide array of applications:
machine learning, wireless communication, computer vision, etc.
But the available distances between subspaces of distinct dimensions present problems,
and the dimensional asymmetry of the subspaces calls for the use of asymmetric metrics.
We extend the Fubini-Study metric as an asymmetric angle with useful properties,
and whose relations to products of Grassmann and Clifford geometric algebras make it easy to compute. 
We also describe related angles that provide extra information, and a method to extend other Grassmannian metrics to asymmetric metrics on the full Grassmannian.

\vspace{.5em}
\noindent
{\bf Keywords:} Grassmannian, Grassmann manifold, metric, asymmetric metric, distance between subspaces, angle between subspaces.

\vspace{3pt}

\noindent
{\bf MSC 2020:}	Primary 14M15; 	
				Secondary 15A75, 	
				51K99	
}

\section{Introduction}

Subspaces represent data in many areas:
machine learning \cite{Hamm2008,Huang2018,Lerman2011}, 
	\CITE{Zhang2018}
computer vision \cite{Lui2012,Turaga_2008,Vishwanathan2006},
	\CITE{wireless communication: Du2018, Love2003, Zheng2002}
coding theory \cite{Barg2002},  
etc.
	\CITE{Ashikhmin2010 \\ language processing Hall2000 \\
	recommender systems Boumal2015}
Data sets are compared using various metrics on Grassmannians, sets of subspaces of a given dimension \cite{Edelman1999,Qiu2005,Stewart1990,Ye2016}: Fubini-Study, geodesic, chordal, etc. 
The first one is natural for quantum computation and other uses of quantum theory \cite{Bengtsson2017,Nielsen2010,Ortega2002},
	\CITE{Só Fubini-Study em $\PR(H)$. Also	Brody2001, Yu2019}
appearing also in wireless communication 
\cite{Agrawal2001,Dhillon2008,Love2005}.
Grassmannians of complex subspaces are used in both areas.
	\CITE{Agrawal2001, Dhillon2008, Love2003, Love2005, Pereira2021}
	
The full Grassmannian of subspaces of different dimensions is used for
image recognition \cite{Basri2011,Draper2014,Sun_2007,Wang_2006},
numerical linear algebra \cite{Beattie2005,Sorensen2002},
wireless communication \cite{Pereira2022},
	\CITE{Pereira2021}
etc.
But distances available for subspaces of distinct dimensions, like the 
gap and containment gap \cite{Beattie2005,Kato1995,Sorensen2002}, 
directional and symmetric distances \cite{Sun_2007,Wang_2006}, 
	\CITE{Bagherinia2011, Figueiredo2010, Sharafuddin2010, Zuccon2009}
projection Frobenius \cite{Basri2011,Draper2014,Pereira2022},
	\CITE{Pereira2021}
and others \cite{Ye2016}, have shortcomings: some are not metrics, others give little information or lack useful properties.

Part of the problem is a natural asymmetry between subspaces of distinct dimensions, which usual (symmetric) metrics fail to express.
Intuitively, a small region around a line does not contain a plane, but a neighborhood of a plane contains lines;
a line can be closer or farther from being in a given plane, but a plane is never any closer to being contained in a line;
a plane can be close to containing a given line, but not vice versa.
Formalizing these ideas requires an asymmetric metric.

Asymmetric metrics \cite{Anguelov2016,Kunzi2001,Mennucci2014}, in which the distance, time, cost, etc. to go from $A$ to $B$ is not the same as from $B$ to $A$, appear naturally in many situations: \eg a city with one-way streets, rush-hour traffic, going uphill or downhill, etc.
An example on the full Grassmannian is the containment gap,
	\CITE{Beattie2005,Sorensen2002}
but its asymmetry is usually neglected instead of put to good use, and many authors use instead the (symmetrized) gap.
	\CITE{Kato1995}
Both gaps are rough distances, not suited for all applications.

The Fubini-Study metric \cite{Dhillon2008,Gluck1967,Love2005}
is an angle whose cosine (squared, in the complex case) measures volume contraction in orthogonal projections between subspaces.
It extends to an asymmetric metric on the full Grassmannian, given by an asymmetric angle \cite{Mandolesi_Grassmann}
with better properties than similar symmetric angles \cite{Gluck1967,Gunawan2005,Jiang1996}.
Links with products of Grassmann and Clifford algebras \cite{Dorst2007,Mandolesi_Products,Mandolesi_Contractions} give formulas to compute it.
Angles with orthogonal complements can be used to gain extra information.

Our focus on the asymmetric Fubini-Study metric was motivated by use in quantum information theory, but its useful properties make it well suited for general applications involving subspaces of various dimensions.
We also show how other Grassmannian metrics can be extended to asymmetric metrics on the full Grassmannian.
In particular, the Binet-Cauchy metric \cite{Hamm2008} becomes the sine of the asymmetric angle.

\Cref{sc:preliminaries} sets up notation and reviews concepts and results. 
We study the asymmetric angle in \Cref{sc:asymmetric angle}, and related angles in \Cref{sc:Related angles}.
\Cref{sc:Other asymmetric metrics} gives a method to obtain asymmetric metrics.
\Cref{sc:conclusion} closes with a few remarks.
Some inequalities are proven in \Cref{sc:Distance inequalities}.

\section{Preliminaries}\label{sc:preliminaries}

In this article $X=\F^n$ for $\F = \R$ or $\C$, with inner product  $\inner{\cdot,\cdot}$ (Hermitian product if $\F=\C$, conjugate linear in the left  entry).
A \emph{$p$-subspace} is a $p$-dimensional subspace, 
a \emph{line} is a 1-subspace, 
$\F v=\{cv:c\in\F\}$ for $v\in X$, and
$U(X)$ is the unitary group of $X$ (orthogonal group, if $\F=\R$).
For a subspace $V$,
$P_V: X\rightarrow V$ is the orthogonal projection, and
its underlying real space is $V_\R$ ($=V$ if $\F=\R$),
with inner product $\operatorname{Re}\inner{\cdot,\cdot}$.
We use the term `distance' loosely, and `metric' in the formal sense (of metric spaces).
When we say `projected', it means `orthogonally projected'.

\emph{Euclidean} and \emph{Hermitian angles} for nonzero $v,w\in X$ are, respectively, 
$\theta_{v,w} = \cos^{-1}\frac{\operatorname{Re}\inner{v,w}}{\|v\| \|w\|} \in[0,\pi]$ and 
$\gamma_{v,w} = \cos^{-1}\frac{|\inner{v,w}|}{\|v\| \|w\|} \in[0,\frac{\pi}{2}]$. 
Also, let $\theta_{0,v} = \theta_{0,0} = \gamma_{0,v} =\gamma_{0,0} = 0$ and $\theta_{v,0} = \gamma_{v,0}  = \frac{\pi}{2}$.
If $\F=\R$, $\theta_{v,w}$ is the usual angle and $\gamma_{v,w} = \min\{\theta_{v,w},\pi-\theta_{v,w}\}$.
If $\F=\C$, $\theta_{v,w}$ is the usual angle in $X_\R$ 
and $\gamma_{v,w} = \theta_{v,P_{\C w} v}$ \cite{Scharnhorst2001}.
If $\inner{v,w} \geq 0$ (so $\theta_{v,w} = \gamma_{v,w}$) we say $v$ and $w$ are \emph{aligned}, what happens when $P_{\F w} v= \lambda w$ for $\lambda \geq 0$.
The same definitions will later apply to multivectors.

For $q=0,1,2,\ldots$ let $\II^q = \bigcup_{p=0}^q \II_p^q$ with $\II_0^q=\{\emptyset\}$ and, for $1\leq p\leq q$, $\II_p^q= \{ (i_1,\ldots,i_p)\in\N^p : 1\leq i_1 < \cdots<i_p\leq q \}$.
We also write a multi-index $(i_1,\ldots,i_p)$ as $i_1\cdots i_p$.
For $\ii,\jj\in\II^q$, we form $\ii \cup \jj, \ii \cap \jj, \ii-\jj \in \II^q$ from the union and intersection of their indices, 
and by removing from $\ii$ any indices of $\jj$.
We write $\ii\subset \jj$ if all indices of $\ii$ are in $\jj$. 
If $\ii \cap \jj = \emptyset$, $\epsilon_{\ii\jj}$ is the sign of the permutation that puts $\ii\jj$ (the indices of $\ii$ followed by those of $\jj$) in increasing order.	
For $\ii\in\II_p^q$, let $\ii' = (1,\ldots, q)-\ii \in\II_{q-p}^q$.

\subsection{Grassmann algebra}

Grassmann's exterior algebra \cite{Browne2012,Winitzki2010,Yokonuma1992} is a natural formalism for working with subspaces. 
It is a graded algebra $\bigwedge X = \bigoplus_{p\in\Z} \bigwedge^p X$ of \emph{multivectors}, with $\bigwedge^0 X = \F$, $\bigwedge^1 X = X$ and $\bigwedge^p X = \{0\}$ for $p \not\in [0,n]$. It has a bilinear associative \emph{exterior product} $\wedge : \bigwedge^p X \times \bigwedge^q X \rightarrow \bigwedge^{p+q} X$ such that, for $\kappa,\lambda \in \F$ and $u,v\in X$, $\kappa\wedge\lambda = \kappa\lambda$, $\lambda \wedge v = \lambda v$ and $u \wedge v = -v\wedge u$  (so $v\wedge v=0$).
If $A\in \bigwedge^p X$ and $B\in \bigwedge^q X$, $A\wedge B = (-1)^{pq} B\wedge A$.
Elements of $\bigwedge^p X$ have \emph{grade} $p$ and are linear combinations of \emph{$p$-blades} $B=v_1\wedge\cdots\wedge v_p$ for $v_1,\ldots,v_p\in X$. 
If $B\neq 0$, it \emph{represents} a $p$-subspace $[B]=\Span\{v_1,\ldots,v_p\} = \{v\in X: v\wedge B=0\}$.
A scalar $\lambda\in\bigwedge^0 X$ is a $0$-blade, and $[\lambda] = \{0\}$ (but we only say it represents $\{0\}$ if $\lambda\neq 0$).
	\SELF{Mas só tem $V=\Ann(\lambda)$ e $\bigwedge^0 V=\Span\{\lambda\}$ se  $\lambda \neq 0$}

The inner product of $A=v_1\wedge\cdots\wedge v_p$ and $B=w_1\wedge\cdots\wedge w_p$ is $\inner{A,B} = \det\!\big(\inner{v_i,w_j}\big)$, and $\inner{\kappa,\lambda}=\bar{\kappa}\lambda$ for  $\kappa,\lambda \in\bigwedge^0 X$.
It is extended linearly (sesquilinearly, if $\F=\C$) with distinct $\bigwedge^p X$'s being  orthogonal.
If $\F=\R$, $\|A\|=\sqrt{\inner{A,A}}$ is the $p$-volume of the parallelotope spanned by $v_1,\ldots,v_p$ (Fig.\,\ref{fig:blades}).
If $\F=\C$, $\|A\|^2$ is the $2p$-volume of the parallelotope spanned by $v_1,\im v_1,\ldots,v_p, \im v_p$.
If $[A] \perp [B]$, $\|A\wedge B\| = \|A\|\|B\|$.
For $u,v \in X$, $\|u\wedge v\| = \|u\|\|v\|\sin \gamma_{u,v}$.

\begin{figure}
	\centering
	\includegraphics[width=0.75\linewidth]{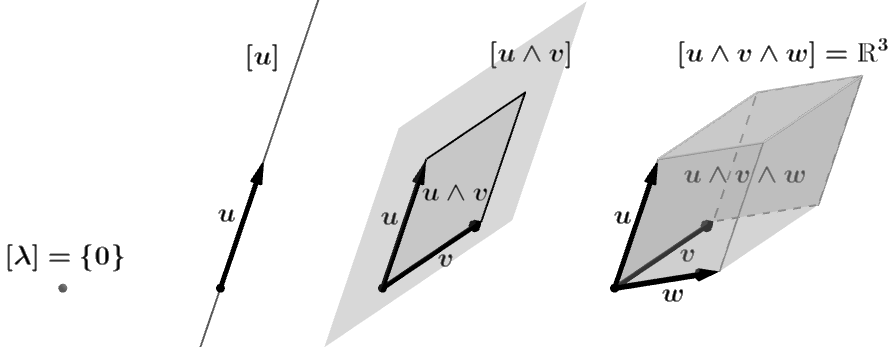}
	\caption{In $\bigwedge \R^3$, a 0-blade is a scalar $\lambda$, a 1-blade is a vector $u$, a 2-blade is shown as a parallelogram $u\wedge v$, and a 3-blade as a parallelepiped $u\wedge v\wedge w$. Their spaces are a point $[\lambda]=\{0\}$, a line $[u]$, a plane $[u\wedge v]$ and the space $[u\wedge v\wedge w] = \R^3$. Their norms are $|\lambda|$, the length of $u$, the area of $u\wedge v$ and the volume of $u\wedge v\wedge w$.}
	\label{fig:blades}
\end{figure}

We write just $P_A$ for $P_{[A]}$, and, given a subspace $V\subset X$, use $P_V$ for the orthogonal projection $P_{\bigwedge V}:\bigwedge X\rightarrow \bigwedge V$, as it extends $P_V:X\rightarrow V$ via $P_V 1 = 1$ and $P_V(A\wedge B) = P_V A \wedge P_V B$ for $A,B\in \bigwedge X$.

Given $v_1,\ldots,v_q \in X$, we write $v_\ii = v_{i_1}\wedge\cdots\wedge v_{i_p}$ for $\ii=(i_1,\ldots,i_p)\in\II_p^q$, and $v_\emptyset = 1$.
If $\beta = (v_1,\ldots,v_q)$ is a basis of $V$, $\{v_\ii\}_{\ii\in\II_p^q}$ and $\{v_\ii\}_{\ii\in\II^q}$ are bases of $\bigwedge^p V$ and $\bigwedge V$, and are orthonormal if $\beta$ is.
As $\dim \bigwedge^p V = \binom{q}{p}$, $\bigwedge^q V = \Span\{v_1\wedge\cdots\wedge v_q\}$ is a line in $\bigwedge X$.
Each $[v_\ii]$ is a \emph{coordinate subspace}.
For $\ii,\jj\in\II^q$, 
if $\ii\cap\jj = \emptyset$ then
$v_{\ii} \wedge v_{\jj} = \epsilon_{\ii\jj} v_{\ii \cup \jj}$, otherwise $v_{\ii} \wedge v_{\jj} = 0$.
For nonzero blades, if $[A]\cap[B] = \{0\}$ then $[A\wedge B] = [A]\oplus [B]$, otherwise $A\wedge B = 0$.
For any multivectors, with those bases one obtains:

\begin{lemma}\label{pr:wedge disjoint multivectors}
	Let $A\in\bigwedge V$ and $B\in\bigwedge W$ for disjoint
	subspaces $V,W\subset X$. Then $A\wedge B = 0 \Leftrightarrow A=0$ or $B=0$.
	\SELF{$(v_i),(w_j)$ bases $V,W$ \\ $\Rightarrow$ $(v_i,w_j)$ basis $V\oplus W$. \\[3pt]
		$(v_\ii)_{\ii\in\cup \I_i^p}$ basis $\bigwedge V$, \\ $(w_\jj)_{\jj\in\cup \I_j^q}$ basis $\bigwedge W$, \\ $(v_\ii\wedge w_\jj)$ basis $\bigwedge(V\oplus W)$. \\[3pt]
		$A=\sum a_\ii v_\ii$, $B=\sum b_\jj w_\jj$, \\$A\wedge B=\sum a_\ii b_\jj v_\ii\wedge w_\jj$. \\[3pt]
		$A\wedge B=0$  $\Rightarrow$ $a_\ii b_\jj=0 \ \forall \ii,\jj$ \\ 
		Some $a_\ii\neq 0$ $\Rightarrow$ $b_\jj=0\ \forall \jj$, e vice versa.} 
\end{lemma}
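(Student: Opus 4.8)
The plan is to reduce everything to the linear independence of basis blades. The backward implication is immediate, since $A\wedge 0 = 0 = 0\wedge B$, so the whole content lies in the forward direction: assuming $A\wedge B = 0$, I must conclude $A=0$ or $B=0$.

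For this I would first exploit that $V,W$ are disjoint, i.e. $V\cap W=\{0\}$, so that $V\oplus W$ is a genuine direct sum. Concatenating a basis $(v_1,\ldots,v_p)$ of $V$ with a basis $(w_1,\ldots,w_r)$ of $W$ then yields a basis $(v_1,\ldots,v_p,w_1,\ldots,w_r)$ of $V\oplus W$. By the reviewed fact that a basis of a subspace induces a basis of its Grassmann algebra, the blades $v_\ii\wedge w_\jj$, with $\ii\in\II^p$ and $\jj\in\II^r$, form a basis of $\bigwedge(V\oplus W)$. Here I would note that, because every $v$-index precedes every $w$-index in the chosen ordering, these products are exactly the basis blades $u_\kk$ of $V\oplus W$ (with $\kk$ splitting into its $v$-part $\ii$ and its shifted $w$-part $\jj$), with no sign corrections needed. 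In particular the $v_\ii\wedge w_\jj$ are linearly independent.

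Next I would expand $A=\sum_{\ii\in\II^p}a_\ii v_\ii$ and $B=\sum_{\jj\in\II^r}b_\jj w_\jj$ in the respective bases of $\bigwedge V$ and $\bigwedge W$. Bilinearity and associativity of $\wedge$ give $A\wedge B=\sum_{\ii,\jj}a_\ii b_\jj\,v_\ii\wedge w_\jj$. If $A\wedge B=0$, the linear independence of the $v_\ii\wedge w_\jj$ forces $a_\ii b_\jj=0$ for every pair $\ii,\jj$. Assuming $A\neq 0$, I pick $\ii_0$ with $a_{\ii_0}\neq 0$; then $b_\jj=0$ for all $\jj$, so $B=0$. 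Hence $A=0$ or $B=0$, as claimed.

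The only step requiring genuine justification is the linear independence of the combined blades $v_\ii\wedge w_\jj$; everything else is routine bilinear bookkeeping. I expect this to be the main (indeed the only real) obstacle, and it is dispatched entirely by the disjointness hypothesis, which is precisely what makes the concatenated list an honest basis of $V\oplus W$ rather than a merely spanning family carrying linear relations.
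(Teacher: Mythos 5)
Your proof is correct and follows essentially the same route as the paper's own argument: concatenate bases of $V$ and $W$ (valid by disjointness) to get a basis of $V\oplus W$, observe that the blades $v_\ii\wedge w_\jj$ form a basis of $\bigwedge(V\oplus W)$, expand $A\wedge B=\sum a_\ii b_\jj\, v_\ii\wedge w_\jj$, and conclude from linear independence that all products $a_\ii b_\jj$ vanish, forcing $A=0$ or $B=0$. Your extra remark that no sign corrections arise because every $v$-index precedes every $w$-index is a small but accurate refinement of the same argument.
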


\subsection{Algebraic varieties}\label{sc:Projective spaces and Grassmannians}


The \emph{projective space} \cite{Griffiths1994} of $X$ is $\PR(X) = \{$lines of $X\}$.
\emph{Angular, chordal} and \emph{gap distances} for $K=\Span\{u\}$ and $L=\Span\{v\}$ are  
$\theta_{K,L} = \gamma_{u,v}$, 
$c_{K,L} = \|u-v\| = 2\sin\frac{\gamma_{u,v}}{2}$ and 
$g_{K,L} = \|u-P_{L}u\| = \sin \gamma_{u,v}$, respectively (Fig.\,\ref{fig:distances}).
As argued in \cite{Qiu2005}, $\theta_{K,L}$ is more fundamental, since $c_{K,L}$ and $g_{K,L}$ derive from it via concave functions, 
and while for these the triangle inequality attains equality only in trivial cases, 
	\SELF{if 2 of the lines coincide}
$\PR(X)$ is a geodesic metric space with the \emph{Fubini-Study metric} 
	\CITE{Goldman1999} 
$d_{FS}(K,L) = \theta_{K,L}$.

\begin{figure}
	\centering
	\includegraphics[width=0.35\linewidth]{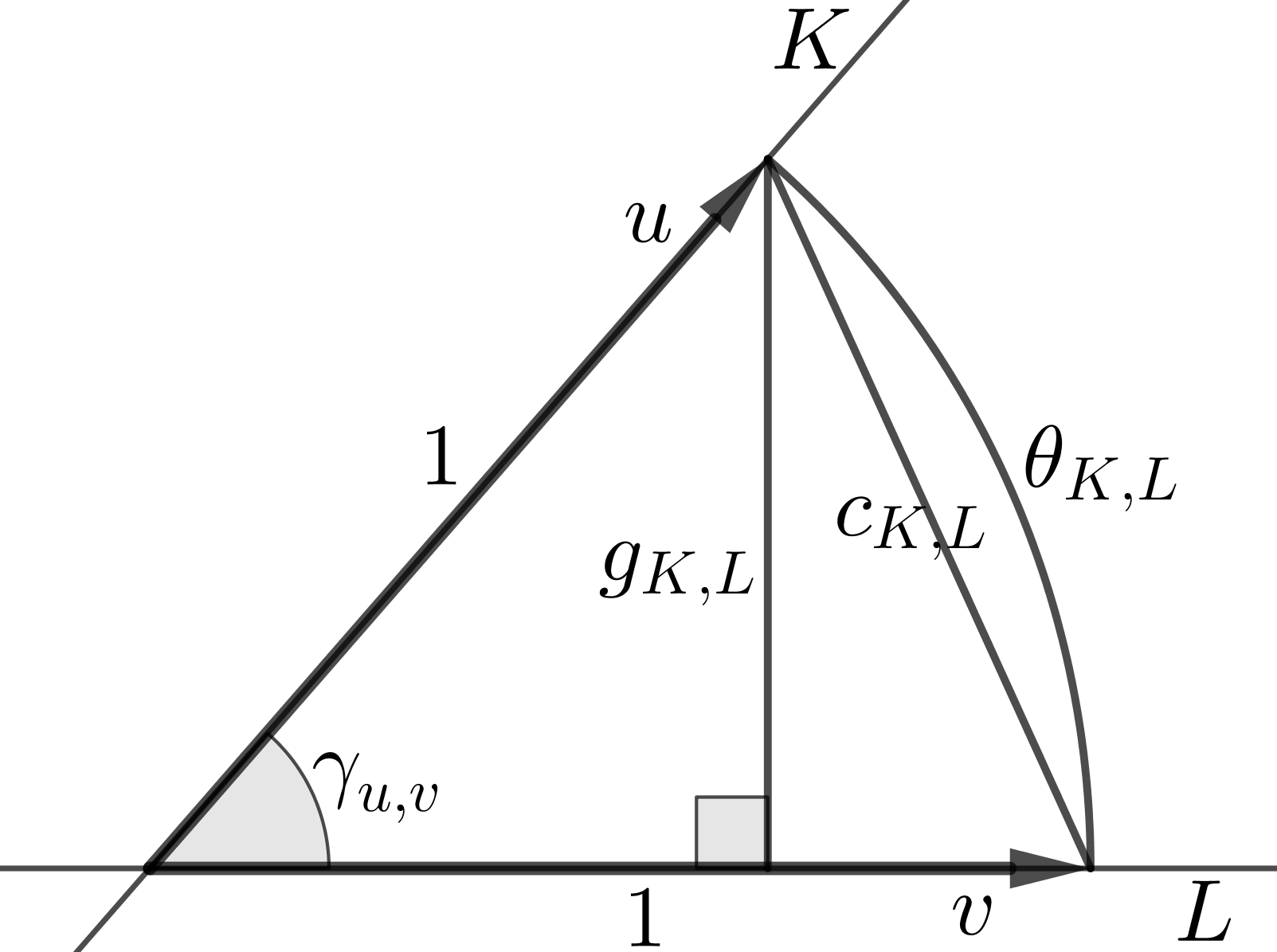}
	\caption{Angular, chordal and gap distances between lines ($u$ and $v$ aligned)}
	\label{fig:distances}
\end{figure}

We prove the following spherical triangle inequality in \Cref{sc:Distance inequalities},
\CITE{real: Reid2005 p.38 \\
	complex: Goldman1999 p.16, em termos de Fubini}
as usual proofs omit details we need when $\F=\C$.
Of course, the result also holds in $\PR(\bigwedge X)$ with $u,v,w$ being multivectors.

\begin{proposition}\label{pr:spherical triangle inequality lines}
	$\theta_{J,L} \leq \theta_{J,K} + \theta_{K,L}$ for $J,K,L \in \PR(X)$, with equality if, and only if, $J=\Span\{u\}$, $K=\Span\{v\}$, $L=\Span\{w\}$ 
	\SELF{deixar $\Span\{u\}$ ao invés de $[u]$ para não confundir quando  usar com lines spanned by blades}
	for aligned $u,v,w\in X$ 
	with $v=\kappa u + \lambda w$ for $\kappa,\lambda \geq 0$.
\end{proposition}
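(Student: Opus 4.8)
The plan is to reduce the statement to the classical spherical triangle inequality on the unit sphere of the real space $X_\R$, and then to transfer it to the Hermitian angle by a judicious choice of phases. Two facts drive the reduction. First, multiplying a spanning vector by a unit-modulus scalar (a sign $\pm1$ if $\F=\R$, a factor $e^{\im\phi}$ if $\F=\C$) changes neither the line it spans nor the angle $\gamma$, so I may take all representatives of unit length and adjust their phases freely. Second, for unit $u,w\in X$,
\[
\cos\theta_{u,w}=\operatorname{Re}\inner{u,w}\le|\inner{u,w}|=\cos\gamma_{u,w},
\]
so $\gamma_{u,w}\le\theta_{u,w}$, with equality exactly when $u$ and $w$ are aligned; indeed $\gamma_{u,w}=\min_c\theta_{u,cw}$ over unit-modulus $c$.

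Next I would establish the Euclidean spherical triangle inequality with a sharp equality clause. For unit $a,b,c\in X_\R$, write $a=\cos\alpha\,b+\sin\alpha\,b_1$ and $c=\cos\beta\,b+\sin\beta\,b_2$ with $b_1,b_2\perp b$ unit and $\alpha=\theta_{a,b},\,\beta=\theta_{b,c}\in[0,\pi]$ (the offending terms vanish if $\sin\alpha$ or $\sin\beta$ is $0$). Then
\[
\inner{a,c}=\cos\alpha\cos\beta+\sin\alpha\sin\beta\,\inner{b_1,b_2}\ge\cos\alpha\cos\beta-\sin\alpha\sin\beta=\cos(\alpha+\beta),
\]
whence $\theta_{a,c}\le\alpha+\beta$ whenever $\alpha+\beta\le\pi$. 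Equality forces $\inner{b_1,b_2}=-1$, i.e.\ $b_1=-b_2$; unwinding this puts $a,b,c$ on a common great circle with $b$ on the minimizing arc from $a$ to $c$, which is equivalent to $b=\kappa a+\lambda c$ for some $\kappa,\lambda\ge0$.

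I would then assemble the bound. Fix a unit $u\in J$, choose a unit $v\in K$ aligned with $u$, and then a unit $w\in L$ aligned with $v$. Since the representatives are aligned, $\theta_{u,v}=\gamma_{u,v}\le\tfrac{\pi}{2}$ and $\theta_{v,w}=\gamma_{v,w}\le\tfrac{\pi}{2}$, so $\alpha+\beta\le\pi$ and the previous step applies to $a=u,\,b=v,\,c=w$. Combining it with the first inequality of the first paragraph gives
\[
\theta_{J,L}=\gamma_{u,w}\le\theta_{u,w}\le\theta_{u,v}+\theta_{v,w}=\gamma_{u,v}+\gamma_{v,w}=\theta_{J,K}+\theta_{K,L}.
\]

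Finally comes the equality analysis, which is where the complex case demands the care that the usual proofs skip. Overall equality requires equality in both steps at once: $\gamma_{u,w}=\theta_{u,w}$ forces $u$ and $w$ aligned, so $u,v,w$ become pairwise aligned, and the Euclidean equality clause forces $v=\kappa u+\lambda w$ with $\kappa,\lambda\ge0$; conversely, these two conditions run the chain backwards to equalities, matching the stated criterion. The main obstacle is bookkeeping: in $\F=\C$ the phases of $v$ and $w$ are already pinned by the alignment choices, so the extra demand that $u$ and $w$ be aligned is a genuine (not automatic) consequence of equality that must be verified to be consistent with those choices, and one must confirm that the degenerate spherical configurations (an angle equal to $0$ or $\pi$, or $u=-w$) are precisely the ones in which the first inequality $\gamma_{u,w}<\theta_{u,w}$ is strict, so they never yield overall equality. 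Allowing $\kappa$ or $\lambda$ to vanish absorbs the cases $K=J$ or $K=L$, and the bound $\gamma\le\tfrac{\pi}{2}$ disposes of the range where the right-hand side already exceeds $\tfrac{\pi}{2}$.
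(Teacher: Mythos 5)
Your proposal is correct, and its computational core --- decompose two unit vectors against a pivot, bound the cross term via Cauchy--Schwarz, read the equality case off the resulting identity --- is the same as the paper's, but the two arguments are arranged genuinely differently. The paper places the pivot at the endpoint $L$: it fixes a unit $w\in L$ and aligns \emph{both} $u$ and $v$ to it by normalized projections $u=P_Jw/\|P_Jw\|$ and $v=P_Kw/\|P_Kw\|$, so that $\theta_{J,L}=\theta_{u,w}$ and $\theta_{K,L}=\theta_{v,w}$, and then proves the \emph{difference} form $\cos\theta_{J,K}=|\inner{u,v}|\le\cos(\theta_{u,w}-\theta_{v,w})$, i.e.\ $\theta_{J,K}\ge\theta_{J,L}-\theta_{K,L}$, in a single Hermitian computation valid for $\F=\R$ and $\F=\C$ alike, with the equality clause (explicit $\kappa,\lambda>0$) extracted from the same formula. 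You place the pivot at the middle line $K$: chain alignment ($v$ to $u$, then $w$ to $v$) turns the two right-hand terms into Euclidean angles, and the left-hand side is handled by $\gamma_{u,w}\le\theta_{u,w}$ followed by a standalone real spherical triangle inequality in $X_\R$ (the \emph{sum} form). Your arrangement buys modularity: the classical real geometry is isolated as a lemma, and the complex content is confined to the inequality $\gamma\le\theta$ and phase choices. Its cost is two provisos that the paper's difference form never needs: the sum form requires $\alpha+\beta\le\pi$, and your Euclidean equality clause (equality equivalent to $b=\kappa a+\lambda c$ with $\kappa,\lambda\ge0$) is false as stated when $a$ and $c$ are antipodal, a configuration you must --- and correctly do --- exclude on the grounds that $\gamma_{u,w}<\theta_{u,w}$ is strict there. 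One sentence of yours is miscalibrated, though harmlessly: the angle-zero degeneracies are not among the configurations ruled out by strictness of the first inequality; they are legitimate equality cases, absorbed by letting $\kappa$ or $\lambda$ vanish, exactly as your closing sentence says.
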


Equality means $K$ is in a minimal geodesic between $J$ and $L$.
It happens when $u,v,w$ are in an isotropic
	\CITE{\emph{totally real} {Goldman1999}, \emph{antiholomorphic} {Rosenfeld1997}, but some authors use these differently}
real plane (a real plane where $\inner{\cdot,\cdot}$ is real),
with $\R v$ in the smaller pair of angles formed by $\R u$ and $\R w$.
The next example shows the need for alignment.

\begin{example}
	In $X=\C^2$, let $u=(\im,0)$, $w=(\frac{1}{2},\frac{\sqrt{3}}{2})$ and $v=u+w$.
	As $\gamma_{u,w}=60^\circ$ and $\gamma_{u,v}=\gamma_{v,w}\cong 38^\circ$, 
	$\C v$ is not in a geodesic segment between $\C u$ and $\C w$ in $\PR(X)$. 
	But $\theta_{u,w}=90^\circ$ and $\theta_{u,v}=\theta_{v,w}=45^\circ$, 
	so $\R v$ lies in a geodesic segment between $\R u$ and $\R w$ in $\PR(X_\R)$.
\end{example}

For a $q$-subspace $V$, the \emph{Grassmannian} $G_p(V) = \{p$-subspaces of $V\}$ is a compact manifold \cite{Griffiths1994,Kozlov2000III,Kozlov2000I}.
	\OMIT{the compact group $U(V)$ acts transitively}
For $p>q$, $G_p(V)=\emptyset$.
The \emph{full Grassmannian} is
$G(V)=\bigcup_{p=0}^q G_p(V) = \{$subspaces of $V\}$. 
The \emph{\Plucker\  embedding} $G(X)\hookrightarrow \PR(\bigwedge X)$ maps $U \in G_p(X)$ to its line $\bigwedge^p U \in \PR(\bigwedge X)$,
and lets us identify $G(X)$ with the set of nonzero blades modulo scalar multiplication.

\subsection{Principal angles and partial orthogonality}

Principal or canonical angles \cite{Afriat1957,Bjorck1973,Qiu2005} 
	\CITE{Gluck1967,Golub2013}
are widely used in the study of Grassmannians and other areas.

\begin{definition}\label{df:principal}
	For nonzero $V,W \in G(X)$, orthonormal bases $\beta_V = (e_1,\ldots,e_p)$ and $\beta_W= (f_1,\ldots,f_q)$ are \emph{associated principal bases}, formed by \emph{principal vectors}, if $\inner{e_i,f_j} = 0$ for $i\neq j$ and $\inner{e_i,f_i} = \cos\theta_i$ for $1\leq i \leq m=\min\{p,q\}$ and \emph{principal angles} $0\leq \theta_1\leq\cdots\leq\theta_m\leq\frac \pi 2$.
\end{definition}

We also say $\beta_W$ is a principal basis of $W$ \wrt $V$.
Note that $\theta_i = \theta_{e_i,f_i} = \gamma_{e_i,f_i}$.
The number of null $\theta_i$'s is $\dim (V\cap W)$. 

If $\sigma_1\geq\cdots\geq\sigma_m$ are the singular values of the orthogonal projection $P:V\rightarrow W$ then $\cos\theta_i = \sigma_i$.
So the $\cos^2\theta_i$'s are the eigenvalues of $P^*P$ if $p\leq q$, or $PP^*$ if $p>q$, 
while the $e_i$'s and $f_i$'s are orthonormal eigenvectors of $P^*P$ and $PP^*$, respectively.
The $\theta_i$'s are uniquely defined, but the $e_i$'s and $f_i$'s are not.

A recursive description is that $e_1$ and $f_1$ form the minimal angle $\theta_1 = \min\left\{ \theta_{v,w} : 0\neq v\in V, 0\neq w\in W \right\}$; in their orthogonal complements we find $e_2$, $f_2$ and $\theta_2$ in the same way; and so on.
For $i>m$ other vectors are chosen to complete an orthonormal basis.
Geometrically, the unit sphere of $V$ projects to an ellipsoid in $W$,
and if $\F=\R$ the $e_i$'s for $1\leq i\leq m$ project onto its semi-axes, of lengths $\cos\theta_i$, and the $f_i$'s point along them. If $\F=\C$, for each $i$ there are two semi-axes of equal lengths where $e_i$ and $\im e_i$ project, so in the underlying real spaces each $\theta_i$ is twice repeated.

\begin{example}\label{ex:real principal angles}
	Let $(f_1,\ldots,f_5)$ be the canonical basis of $\R^5$. Then $e_1=\frac{f_1+f_3}{\sqrt{2}}$, $e_2=\frac{f_2+f_4}{\sqrt{2}}$, $f_1$, $f_2$ and $f_5$ are principal vectors for $V=[e_{12}]$ and $W=[f_{125}]$, with principal angles $\theta_1=\theta_2=45^\circ$.
\end{example}

\begin{example}\label{ex:complex principal angles}
	In $\C^4$, we have $e_1=(\frac{\sqrt{2}}{2},\frac{\sqrt{2}}{2},0,0)$, $e_2=(0,0,\frac{\im}{2},\frac{\sqrt{3}}{2})$, $f_1=(\frac{1+\im}{2},\frac{1-\im}{2},0,0)$ and $f_2=(0,0,\im ,0)$ as principal vectors for $V=[e_{12}]$ and $W=[f_{12}]$, with principal angles $\theta_1=45^\circ$ and $\theta_2=60^\circ$.
	The underlying real subspaces $V_\R,W_\R\subset\R^8$ have principal vectors
	\[\begin{aligned}
		e_1 &= (\tfrac{\sqrt{2}}{2},0,\tfrac{\sqrt{2}}{2},0,0,0,0,0), &f_1 &= (\tfrac12,\tfrac12,\tfrac12,-\tfrac12,0,0,0,0), \\
		\tilde{e}_1 & = \im e_1 = (0,\tfrac{\sqrt{2}}{2},0,\tfrac{\sqrt{2}}{2},0,0,0,0), &\tilde{f}_1 &= \im f_1 = (-\tfrac12,\tfrac12,\tfrac12,\tfrac12,0,0,0,0), \\
		e_2 &= (0,0,0,0,0,\tfrac{1}{2},\tfrac{\sqrt{3}}{2},0), &f_2 &= (0,0,0,0,0,1,0,0),\\
		\tilde{e}_2 &= \im e_2 = (0,0,0,0,-\tfrac{1}{2},0,0,\tfrac{\sqrt{3}}{2}), \quad&\tilde{f}_2 &= \im f_2 = (0,0,0,0,-1,0,0,0), \end{aligned}\]
	with principal angles $\theta_1=\tilde{\theta}_1=45^\circ$ and $\theta_2=\tilde{\theta}_2=60^\circ$. 
\end{example}

We will need the following results.
The first one is an easy generalization of \cite[Thm. 3]{Wong1967}, and shows a unitarily invariant distance between subspaces must be a function of the dimensions and principal angles \cite{Qiu2005,Stewart1990},
all of which are needed to fully describe their relative position.
 
 \begin{proposition}[\cite{Wong1967}]\label{pr:relative position}
 	For nonzero $V,V'\in G_p(X)$ and $W,W'\in G_q(X)$, there is $T\in U(X)$ with $T(V)=V'$ and $T(W)=W'$ if, and only if, $V$ and $W$ have the same principal angles as $V'$ and $W'$.
 \end{proposition}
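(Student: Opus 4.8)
The plan is to prove both implications using associated principal bases, with the ``only if'' direction immediate and the converse resting on a Gram-matrix argument. For the ``only if'' direction, suppose $T\in U(X)$ satisfies $T(V)=V'$ and $T(W)=W'$. I would take associated principal bases $(e_1,\dots,e_p)$ of $V$ and $(f_1,\dots,f_q)$ of $W$ as in \Cref{df:principal}, and observe that $(Te_1,\dots,Te_p)$ and $(Tf_1,\dots,Tf_q)$ are orthonormal bases of $V'$ and $W'$. Since $T$ preserves the inner product, $\inner{Te_i,Tf_j}=\inner{e_i,f_j}$, so these images satisfy exactly the relations of \Cref{df:principal} with the same angles $\theta_i$; as the principal angles of a pair are uniquely defined, $V'$ and $W'$ have the same principal angles as $V$ and $W$.

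For the ``if'' direction I would construct $T$ explicitly. Choose associated principal bases $(e_i),(f_j)$ for $(V,W)$ and $(e_i'),(f_j')$ for $(V',W')$, both realizing the common angles $\theta_1,\dots,\theta_m$ with $m=\min\{p,q\}$, and complete $V+W$ with an orthonormal basis of its orthogonal complement, doing the same on the primed side. The key observation is that the two resulting spanning sets of $X$ have identical Gram matrices: the $e$'s are orthonormal among themselves, likewise the $f$'s, the complement vectors are orthonormal and orthogonal to everything else, and the only cross terms are $\inner{e_i,f_j}=\delta_{ij}\cos\theta_i=\inner{e_i',f_j'}$. For this matching to be meaningful I first need the two complements to have equal dimension, which follows from $\dim(V+W)=p+q-\dim(V\cap W)$ together with the fact (noted after \Cref{df:principal}) that $\dim(V\cap W)$ equals the number of vanishing principal angles, hence is the same for both pairs.

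Once the Gram matrices agree, I would invoke the standard fact that a correspondence between spanning sets of two inner-product spaces that preserves all pairwise inner products extends to a unique unitary: any linear relation $\sum_\alpha c_\alpha x_\alpha = 0$ among the first set forces $\|\sum_\alpha c_\alpha y_\alpha\|^2=\sum_{\alpha,\beta}\overline{c_\alpha}c_\beta\inner{y_\alpha,y_\beta}=\sum_{\alpha,\beta}\overline{c_\alpha}c_\beta\inner{x_\alpha,x_\beta}=0$ among the images, so the assignment $e_i\mapsto e_i'$, $f_j\mapsto f_j'$ (and complement to complement) is a well-defined element $T\in U(X)$; by construction $T(V)=\Span\{e_i'\}=V'$ and $T(W)=\Span\{f_j'\}=W'$. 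The delicate points, which I expect to be the main obstacle, are the bookkeeping ones in the degenerate cases: when $\theta_i=0$ one has $e_i=f_i$ and $e_i'=f_i'$, so the spanning set is genuinely redundant and the correspondence must be checked to stay consistent (it does, precisely because the Gram matrices coincide), and in the complex case one must ensure $\inner{e_i,f_i}=\cos\theta_i$ is the actual Hermitian inner product and real, so that each block $\Span\{e_i,f_i\}$ is complex-isometric to $\Span\{e_i',f_i'\}$.
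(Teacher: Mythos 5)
Your argument is correct, and it is worth pointing out that the paper itself offers no proof of this proposition: it is attributed to \cite{Wong1967} (Theorem 3 there covers the real case), with the generalization to $\F=\C$ and to pairs of different dimensions merely asserted to be easy. What you wrote is, in effect, the expansion of that assertion into a complete self-contained argument. Your ``only if'' direction is the expected one: a unitary carries associated principal bases to orthonormal bases satisfying exactly the relations of \Cref{df:principal} with the same $\theta_i$, and uniqueness of the principal angles finishes it. For the converse, your Gram-matrix device is the cleanest route: the union of the two principal bases with an orthonormal basis of $(V+W)^\perp$ spans $X$; the hypothesis forces the two Gram matrices to coincide, since all entries are $\delta$'s and the real numbers $\inner{e_i,f_j}=\delta_{ij}\cos\theta_i$ (real even when $\F=\C$, by the definition of principal bases); and a Gram-preserving correspondence between spanning sets extends to a linear isometry, hence an element of $U(X)$ in finite dimension. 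Your well-definedness computation is precisely what makes redundancy in the spanning set harmless, e.g.\ $e_i=f_i$ when $\theta_i=0$, so the degenerate cases need no separate treatment. The one step that genuinely required checking, and which you did check, is that the two orthogonal complements have equal dimension; this uses the fact, recorded right after \Cref{df:principal}, that $\dim(V\cap W)$ equals the number of vanishing principal angles, so it is determined by the data assumed equal. In short: where the paper delegates to the literature, your proof supplies the full argument, valid simultaneously for $\F=\R$ and $\F=\C$ and for $p\neq q$, which is exactly what the phrase ``easy generalization'' in the paper leaves implicit.
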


 \begin{proposition}[\cite{Qiu2005}]\label{pr:thetas perps}
 	For $V,W\neq \{0\}$ or $X$, the nonzero principal angles of $V^\perp$ and $W^\perp$ are the same as those of $V$ and $W$.
 \end{proposition}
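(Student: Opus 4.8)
The plan is to exploit the simultaneous orthogonal decomposition of $X$ adapted to the pair $(V,W)$ (the principal-angle, or cosine--sine, decomposition) and to observe that it serves $(V^\perp,W^\perp)$ equally well, so that both lists of principal angles can be read off from the same blocks. Assuming without loss of generality that $\dim V=p\le q=\dim W$, I would fix associated principal bases $(e_1,\ldots,e_p)$ of $V$ and $(f_1,\ldots,f_q)$ of $W$ with principal angles $\theta_1\le\cdots\le\theta_p$, as in \Cref{df:principal}. Since $\inner{e_i,f_j}=0$ for $i\ne j$, the planes $S_i=\Span\{e_i,f_i\}$ ($1\le i\le p$), the space $R=\Span\{f_{p+1},\ldots,f_q\}$, and $Y=\bigl(\bigoplus_i S_i\oplus R\bigr)^\perp$ are mutually orthogonal and give $X=\bigoplus_{i=1}^p S_i\oplus R\oplus Y$. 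The blocks fall into three types: a null angle $\theta_i=0$ forces $e_i=f_i$, so $S_i$ is a line in $V\cap W$; a right angle $\theta_i=\tfrac\pi2$ forces $e_i\perp f_i$, with $e_i\in V\cap W^\perp$ and $f_i\in V^\perp\cap W$; and for $0<\theta_i<\tfrac\pi2$ the plane $S_i$ is genuinely two-dimensional. Moreover $R\subset V^\perp\cap W$ and $Y\subset V^\perp\cap W^\perp$.

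The first key step is the two-dimensional computation inside a generic block: writing $f_i=\cos\theta_i\,e_i+\sin\theta_i\,g_i$ for a unit $g_i\in S_i$ with $g_i\perp e_i$, the lines $V^\perp\cap S_i=\Span\{g_i\}$ and $W^\perp\cap S_i=\Span\{-\sin\theta_i\,e_i+\cos\theta_i\,g_i\}$ again make Hermitian angle $\theta_i$ --- a one-line inner-product check that is valid for $\F=\C$ precisely because $\theta_i$ is real. Since the whole decomposition is invariant under $P_{V^\perp}=\Id-P_V$ and $P_{W^\perp}=\Id-P_W$, it simultaneously realizes the principal-angle decomposition of $(V^\perp,W^\perp)$; the second step is then to read off the eigenvalues of $P_{W^\perp}P_{V^\perp}P_{W^\perp}$ on $W^\perp$ (the smaller of the two complements) block by block, obtaining $\cos^2\theta_i$ on each generic $W^\perp\cap S_i$, the eigenvalue $1$ on $Y\subset V^\perp\cap W^\perp$ (angle $0$), and the eigenvalue $0$ on the right-angle directions $e_i\in V\cap W^\perp$ (angle $\tfrac\pi2$).

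Comparing the two lists finishes the argument. For $(V,W)$ the nonzero principal angles are $\dim(V\cap W^\perp)$ copies of $\tfrac\pi2$ together with the generic $\theta_i\in(0,\tfrac\pi2)$; for $(V^\perp,W^\perp)$ one gets the very same generic $\theta_i$'s and again $\dim(W^\perp\cap V)=\dim(V\cap W^\perp)$ copies of $\tfrac\pi2$, while the null angles merely change multiplicity (from $\dim(V\cap W)$ to $\dim(V^\perp\cap W^\perp)$), which is irrelevant. I expect the only real obstacle to be this last bookkeeping of the boundary angles rather than the generic blocks: one must verify that $\tfrac\pi2$ occurs with equal multiplicity on both sides, the point being that although $\dim(V^\perp\cap W)=q-p+\dim(V\cap W^\perp)$ can exceed $\dim(V\cap W^\perp)$, the surplus $q-p$ directions are absorbed because $V^\perp$ is larger than $W^\perp$ and the pair has only $\min\{n-p,n-q\}=n-q$ principal angles.
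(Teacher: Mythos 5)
Your proof is correct, but there is nothing in the paper to compare it against: \Cref{pr:thetas perps} is stated with an attribution to \cite{Qiu2005} and no proof is given in the paper itself, so any valid argument is necessarily ``different from the paper's.'' Your derivation is sound and self-contained. The joint orthogonal decomposition $X=\bigoplus_{i=1}^p S_i\oplus R\oplus Y$ built from associated principal bases (a subspace form of the CS decomposition) is indeed invariant under $P_V$ and $P_W$, hence under $P_{V^\perp}=\Id-P_V$ and $P_{W^\perp}=\Id-P_W$, so the same blocks serve the complementary pair; the two-dimensional computation in a generic block is legitimate over $\C$ because every coefficient appearing in it is real; and the spectrum of $P_{W^\perp}P_{V^\perp}P_{W^\perp}$ restricted to $W^\perp$ (the smaller complement, since $p\le q$ gives $\dim W^\perp\le\dim V^\perp$) is exactly $\cos^2\theta_i$ on each generic line $W^\perp\cap S_i$, $1$ on $Y$, and $0$ on $V\cap W^\perp$, which together exhaust $W^\perp$ since the null-angle blocks and $R$ meet $W^\perp$ trivially. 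The one delicate point, which you identify and resolve correctly, is the multiplicity of $\tfrac\pi2$: because the principal angles of $(V^\perp,W^\perp)$ are read off the smaller space $W^\perp$, that multiplicity is $\dim(W^\perp\cap V)$, matching the multiplicity $\dim(V\cap W^\perp)$ for $(V,W)$, while the larger intersection $V^\perp\cap W$, which carries the surplus $q-p$ directions of $R$, never enters the list. The hypothesis $V,W\neq\{0\},X$ is used only to guarantee that all four subspaces are nonzero so that their principal angles are defined, and your argument works verbatim under exactly that assumption.
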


\begin{lemma}\label{pr:P ei}
	With the notation of \Cref{df:principal}, let $A = e_1 \wedge\cdots\wedge e_p$.
	\begin{enumerate}[i)]
		\item If $V\not\perp W$ then $P_W(V) = [f_1\wedge\cdots\wedge f_k]$ for $k = \max\{i:\theta_i\neq \frac{\pi}{2}\}$, and the principal angles of $V$ and $P_W(V)$ are $\theta_1,\ldots,\theta_k$. \label{it:PV}
		\item $P_W A = \cos\theta_1\cdots\cos\theta_p\, f_1\wedge\cdots\wedge f_p$ if $p\leq q$, otherwise $P_W A = 0$.\label{it:P e1...ep}
		\item $[P_W A] = P_W(V) \Leftrightarrow P_W A \neq 0$. \label{it:[P_W A] = P_W[A]}
		\SELF{requer $W\neq\{0\}$}
	\end{enumerate}
\end{lemma}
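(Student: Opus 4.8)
The plan is to reduce all three parts to a single computation, the action of $P_W$ on the principal vectors $e_i$. Writing the orthogonal projection as $P_W v = \sum_{j=1}^q \inner{f_j, v} f_j$ and using that, by \Cref{df:principal}, $\inner{f_j, e_i} = \overline{\inner{e_i, f_j}}$ vanishes for $i\neq j$ and equals $\cos\theta_i$ (a real number) for $i=j\leq m$, I obtain
\[
P_W e_i =
\begin{cases}
\cos\theta_i\, f_i & \text{if } 1 \leq i \leq m,\\
0 & \text{if } m < i \leq p.
\end{cases}
\]
Everything else follows from this formula together with the multiplicative property $P_W(A\wedge B) = P_W A \wedge P_W B$ recalled in the preliminaries.

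For \ref{it:P e1...ep}, I would expand $P_W A = P_W e_1 \wedge\cdots\wedge P_W e_p$. If $p\leq q$ then $m=p$, every factor is $\cos\theta_i\, f_i$, and collecting the scalars gives $\cos\theta_1\cdots\cos\theta_p\, f_1\wedge\cdots\wedge f_p$. If $p>q$ then $m=q<p$, so $P_W e_i = 0$ for $i>q$ and at least one factor in the wedge vanishes, forcing $P_W A = 0$.

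For \ref{it:PV}, note first that $V\not\perp W$ means $\theta_1\neq\frac\pi2$, so by the ordering $\theta_1\leq\cdots\leq\theta_m$ the index $k=\max\{i:\theta_i\neq\frac\pi2\}$ is well defined and $\cos\theta_i\neq 0$ exactly for $i\leq k$. Since $P_W(V)=\Span\{P_W e_1,\ldots,P_W e_p\}$ by linearity, and the nonzero images are precisely $\cos\theta_1\, f_1,\ldots,\cos\theta_k\, f_k$ (all other $e_i$, including the excess ones with $i>m$, project to $0$), I get $P_W(V)=\Span\{f_1,\ldots,f_k\}=[f_1\wedge\cdots\wedge f_k]$. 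To read off the principal angles of $V$ and $P_W(V)$ I would simply verify that $(e_1,\ldots,e_p)$ and $(f_1,\ldots,f_k)$ meet the conditions of \Cref{df:principal}: orthonormality is inherited, $\inner{e_i,f_j}=0$ for $i\neq j$, $\inner{e_i,f_i}=\cos\theta_i$ for $1\leq i\leq k=\min\{p,k\}$, and $0\leq\theta_1\leq\cdots\leq\theta_k<\frac\pi2$. Hence they are associated principal bases and the principal angles are exactly $\theta_1,\ldots,\theta_k$.

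Finally, \ref{it:[P_W A] = P_W[A]} combines the two. If $P_W A\neq 0$, then by \ref{it:P e1...ep} we are necessarily in the case $p\leq q$ with every $\cos\theta_i\neq 0$, so $k=p$ and $V\not\perp W$, and \ref{it:PV} yields $P_W(V)=[f_1\wedge\cdots\wedge f_p]=[P_W A]$. Conversely, the zero blade represents no subspace, so the equality $[P_W A]=P_W(V)$ can hold only when $P_W A\neq 0$. I expect the only delicate points to be this convention about the zero blade and the careful check in \ref{it:PV} that the degenerate directions ($\theta_i=\frac\pi2$) and the surplus vectors ($i>m$) project to $0$ and therefore do not enlarge $P_W(V)$; once the formula for $P_W e_i$ is in hand, the remainder is bookkeeping.
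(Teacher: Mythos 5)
Your proposal is correct and follows essentially the same route as the paper: the paper's entire proof is the single formula $P_W e_i = \cos\theta_i\, f_i$ for $i\leq m$ and $P_W e_i = 0$ for $i>m$, from which parts \emph{i)}--\emph{iii)} follow exactly as you spell out, using the multiplicativity $P_W(A\wedge B)=P_W A\wedge P_W B$ from the preliminaries. Your added care about the degenerate directions, the surplus vectors, and the zero-blade convention (where the paper's own margin note records that $W\neq\{0\}$ is needed, which is guaranteed here by \Cref{df:principal}) is just the bookkeeping the paper leaves implicit.
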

\begin{proof}
	$P_W e_i = f_i\,\cos\theta_i$ for $1\leq i\leq m$, and $P_W e_i = 0$ if $i>m$.
	\OMIT{$\inner{A.B} = \det(\inner{e_i,f_j}) = \prod_i \inner{e_i,f_i} = \prod_i \cos\theta_i$}
\end{proof}

\begin{definition}
	For $V,W \in G(X)$, when $W^\perp \cap V \neq \{0\}$ we say $V$ is \emph{partially orthogonal} to $W$, and write $V\pperp W$.
\end{definition}

If $p=q$ then $V \pperp W  \Leftrightarrow W\pperp V$. In general the relation is asymmetric.

\begin{proposition}\label{pr:pperp}
	For $V,W\in G(X)$, $V\pperp W$ is equivalent to:
	\begin{enumerate}[i)]
	\item $\dim P_W(V)< \dim V$. 
	\item $\dim W< \dim V$ or a principal angle is $\frac{\pi}{2}$.\label{it:pperp q<p ou pi2}
	\item $P_W A = 0$ for a blade $A$ representing $V$.
	\end{enumerate} 
\end{proposition}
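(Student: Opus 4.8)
The plan is to take condition (i) as a hub and show that the defining relation $V \pperp W$ (that is, $W^\perp \cap V \neq \{0\}$), together with (ii) and (iii), is each equivalent to it. Since (i), (ii) and (iii) all involve $P_W$ applied to $V$, its image, or a representing blade, routing everything through the image dimension keeps the argument uniform.

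First I would dispose of $V \pperp W \Leftrightarrow$ (i) by rank--nullity. The restriction $P_W|_V \colon V \to W$ is linear with kernel $\{v \in V : P_W v = 0\} = V \cap W^\perp$, so $\dim P_W(V) = \dim V - \dim(V \cap W^\perp)$. Hence $\dim P_W(V) < \dim V$ holds exactly when $V \cap W^\perp \neq \{0\}$, which is the definition of $V \pperp W$.

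Next, for (i) $\Leftrightarrow$ (iii), I would take the blade $A = e_1 \wedge \cdots \wedge e_p$ built from a principal basis of $V$; any other blade representing $V$ differs from it by a nonzero scalar, so the vanishing of $P_W A$ is independent of the choice. Since $P_W$ is multiplicative on $\bigwedge X$, we have $P_W A = P_W e_1 \wedge \cdots \wedge P_W e_p$. A wedge of vectors vanishes precisely when the vectors are linearly dependent; as $P_W e_1,\ldots,P_W e_p$ span $P_W(V)$, they are dependent iff $\dim P_W(V) < p = \dim V$. Thus $P_W A = 0 \Leftrightarrow$ (i).

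Finally, for (i) $\Leftrightarrow$ (ii), I would invoke the projection formula from the proof of \Cref{pr:P ei}: $P_W e_i = \cos\theta_i\, f_i$ for $1 \le i \le m = \min\{p,q\}$ and $P_W e_i = 0$ for $i > m$. Because the $f_i$ are orthonormal, $\dim P_W(V)$ equals the number of indices $i \le m$ with $\theta_i \neq \frac{\pi}{2}$. Comparing this with $\dim V = p$ and splitting into the cases $p \le q$ (where $m = p$, so (i) holds iff some $\theta_i = \frac{\pi}{2}$) and $p > q$ (where $m = q < p$, so (i) holds automatically, and indeed $\dim W < \dim V$) reproduces (ii) exactly. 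I expect the only delicate point to be this bookkeeping: one must keep track that $m = \min\{p,q\}$ and that the two clauses of (ii) match the two cases, and the degenerate cases $V = \{0\}$ or $W = \{0\}$, where no principal angles are defined, should be checked directly against all three conditions.
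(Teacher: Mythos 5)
Your proof is correct and takes essentially the same route as the paper's: the paper's one-line proof likewise reduces everything to the principal-basis projection formula $P_W e_i = \cos\theta_i\, f_i$ of \Cref{pr:P ei}, which is exactly what drives your equivalences (i)\,$\Leftrightarrow$\,(ii) and (i)\,$\Leftrightarrow$\,(iii), with rank--nullity handling the definition\,$\Leftrightarrow$\,(i) step. Your explicit treatment of the degenerate cases $V=\{0\}$ or $W=\{0\}$ is a detail the paper leaves implicit, since its proof only claims the result ``for $V,W\neq\{0\}$''.
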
 
\begin{proof}
	For $V,W\neq\{0\}$ it follows from \Cref{pr:P ei}.
		\OMIT{$(i \Leftrightarrow ii)$ Immediate \\
			$(ii \Leftrightarrow iii)$ \ref{pr:P ei}\ref{it:PV} \\
			$(iii \Leftrightarrow iv)$ \ref{pr:P ei}\ref{it:P e1...ep} }
\end{proof}

\begin{definition}\label{df:PO}
	For $V\in G_p(X)$ and $W\in G_q(X)$, 
	decompose $W = W_P \oplus W_\perp$ with $W_P=\{0\}$ and $W_\perp = W$ if $m = \min\{p,q\} = 0$, otherwise
	$W_P=[f_1 \wedge\cdots\wedge f_m]$ and $W_\perp=[f_{m+1}\wedge\cdots\wedge f_q]$ ($=\{0\}$ if $m=q$) for a principal basis $(f_1,\ldots,f_q)$ of $W$ \wrt $V$.
	We refer to $W_P$ as a \emph{projective subspace} of $W$ \wrt $V$.
\end{definition}

If $V\not\pperp W$ then $W_P=P_W(V)$ and $W_\perp = V^\perp \cap W$,
otherwise they depend on the principal basis, with $W_P \supset P_W(V)$ and $W_\perp \subset V^\perp \cap W$ (strict inclusions if $p \leq q$).
	\OMIT{\ref{pr:P ei}\ref{it:PV}, \Cref{pr:pperp}\ref{it:pperp q<p ou pi2}}
The following lemma is immediate:

\begin{lemma}\label{pr:thetas WP Wperp}
	Nonzero $V,W\in G(X)$ have the same principal angles as $V$ and $W_P$, and the same nonzero principal angles as $V\oplus W_\perp$ and $W$. 
\end{lemma}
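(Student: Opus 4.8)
The plan is to fix associated principal bases $\beta_V = (e_1,\ldots,e_p)$ of $V$ and $\beta_W = (f_1,\ldots,f_q)$ of $W$ as in \Cref{df:principal}, with principal angles $\theta_1\leq\cdots\leq\theta_m$, $m=\min\{p,q\}$, and simply read off the principal angles of the two modified pairs by inspecting the cross inner products $\inner{e_i,f_j}$, which by hypothesis vanish for $i\neq j$ and equal $\cos\theta_i$ for $i=j$. Both claims are a matter of extracting a sub-basis or adjoining extra basis vectors while preserving this diagonal structure, so the whole argument is a direct verification against the definition.

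For the first claim I would split on whether $p\geq q$ or $p<q$. When $p\geq q$ we have $m=q$, so by \Cref{df:PO} $W_P = [f_1\wedge\cdots\wedge f_q] = W$ and there is nothing to prove. When $p<q$ we have $m=p$ and $W_P=[f_1\wedge\cdots\wedge f_p]$; then $\beta_V$ and $(f_1,\ldots,f_p)$ are orthonormal bases of $V$ and $W_P$ whose cross inner products $\inner{e_i,f_j}$ for $1\leq i,j\leq p$ still meet the conditions of \Cref{df:principal}. Hence they are associated principal bases of $V$ and $W_P$, with principal angles $\theta_1,\ldots,\theta_p = \theta_1,\ldots,\theta_m$, exactly those of $V$ and $W$.

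For the second claim I would again split on $p\geq q$ versus $p<q$. When $p\geq q$ we get $W_\perp=\{0\}$ and $V\oplus W_\perp = V$, so the statement is trivial. When $p<q$ (so $m=p$), the key observation is that $\inner{e_i,f_j}=0$ for all $i\leq p<j\leq q$ (since there $i\neq j$), hence $V\perp W_\perp=[f_{p+1}\wedge\cdots\wedge f_q]$ and $V\oplus W_\perp$ is an orthogonal direct sum of dimension $q=\dim W$, with orthonormal basis $(e_1,\ldots,e_p,f_{p+1},\ldots,f_q)$. Pairing this basis with $\beta_W$, the cross inner product matrix is diagonal with entries $\cos\theta_1,\ldots,\cos\theta_p,1,\ldots,1$; reordering so the angles increase exhibits associated principal bases of $V\oplus W_\perp$ and $W$ with principal angles $\theta_1,\ldots,\theta_p$ together with $q-p$ zeros. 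Discarding the zeros leaves precisely the nonzero $\theta_i$, i.e. the nonzero principal angles of $V$ and $W$.

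I expect no genuine obstacle, since this is essentially bookkeeping; the only points needing care are the case split (which decides whether $W_P=W$ or $W_\perp=\{0\}$) and the increasing-order convention of \Cref{df:principal}, because the angles newly introduced in the second claim are zeros that must be listed first. One can sidestep the ordering issue entirely by invoking the singular-value characterization recalled after \Cref{df:principal}: in the orthonormal bases above, the matrix of the projection $P_W$ on $V\oplus W_\perp$ is exactly that diagonal cross-Gram matrix, so its singular values are $\cos\theta_1,\ldots,\cos\theta_p,1,\ldots,1$ and the principal angles follow as their arccosines, with the $1$'s contributing only zero angles.
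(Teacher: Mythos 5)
Your proposal is correct and matches the paper's intent: the paper states this lemma as immediate from \Cref{df:PO}, precisely because the principal basis $(f_1,\ldots,f_q)$ used there makes the cross-Gram matrices for $(V,W_P)$ and $(V\oplus W_\perp,W)$ diagonal by construction, which is exactly the verification you carry out. Your write-up simply makes explicit the bookkeeping (case split on $p\lessgtr q$, reordering so zero angles come first, or the singular-value characterization) that the paper leaves to the reader.
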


\subsection{Asymmetric metrics}\label{sc:Asymmetric metrics}

Asymmetric metrics \cite{Anguelov2016,Mennucci2013,Mennucci2014} do not require $d(x,y)=d(y,x)$,
and appear for example in directed graphs or Finsler manifolds, generalizing metrics as Finsler geometry generalizes the Riemannian one \cite{Flores2013}.
Other terms are quasi-metric or quasi-distance \cite{Albert1941,Cobzas2012,Flores2013} and $T_0$-quasi-pseudometric \cite{Kazeem2014,Kunzi2009}.
Ref.\ \cite{Kunzi2001} reviews the subject from a topological perspective.

\begin{definition}\label{df:asymmetric metric}
	An \emph{asymmetric metric}  on a non-empty set $M$ is a function $d:M\times M\rightarrow [0,\infty]$ satisfying, for all $x,y,z\in M$:
	\begin{enumerate}[i)]
		\item \emph{Separation condition}: $d(x,y)=d(y,x)=0 \Leftrightarrow x=y$. \label{it:d=0 x=y}
		\item \emph{Oriented triangle inequality}: $d(x,z) \leq d(x,y)+d(y,z)$. \label{it:oriented triang ineq}
	\end{enumerate}
\end{definition}

The order of $x,y,z$ is important in \ref{it:oriented triang ineq}.
Instead of \ref{it:d=0 x=y}, some authors \cite{Chenchiah2009,Wilson1931} use $d(x,y)=0 \Leftrightarrow x=y$, which is too restrictive for some purposes: e.g., intuitively, the distance from a line to a plane containing it is $0$.

With the \emph{conjugate} distance $d^-(x,y) = d(y,x)$, we have \emph{max} and \emph{mean symmetrized} metrics $\hat{d}=\max\{d,d^-\}$ and $\bar{d} = \frac{d+d^-}{2}$.
	\CITE{cite[p.7]{Deza2016}}
Symmetrizing by the $\min$ does not preserve the triangle inequality.
Some commonly used metrics, like the gap \cite{Kato1995} and the Hausdorff distance \cite{Huttenlocher1993}, 
	\CITE{Knauer2011 \\ Blumenthal1970 p.\,24 proves oriented triangle ineq for directed Hausdorff (inside proof for Hausdorff)}
are symmetrized versions of asymmetric metrics.

Asymmetric metrics give a choice of \emph{backward, forward} or \emph{symmetric topologies} 
	\CITE{Flores2013, Mennucci2013, Mennucci2014}
$\tau^-$, $\tau^+$, $\tau$ generated, respectively, by \emph{backward balls} $B^-_r(x) = \{y\in M:d(y,x)<r\}$, \emph{forward balls} $B^+_r(x) = \{y\in M:d(x,y)<r\}$, or \emph{symmetric balls} $B_r(x) = B^+_r(x) \cap B^-_r(x)$.
	\SELF{$= \{y\in M:\hat{d}(x,y)<r\}$}
While $\tau$ is Hausdorff, as it is the metric topology of $\hat{d}$, $\tau^\pm$ are just $T_0$.
	\OMIT{have $d(x,y)=0$ for $x\neq y$}
Also, $d$ is continuous in $\tau$ 
	\OMIT{$d(x',y)-d(x,y)\leq d(x',x)$, $d(x,y)-d(x',y)\leq d(x,x')$, $|d(x',y)-d(x,y)|\leq \hat{d}(x,x')$ and likewise with $y'$}
but not in $\tau^\pm$,
	\SELF{in both $\tau^\pm$, $d$ can be discontin in both variables}
as \ref{it:oriented triang ineq} gives
$\max\{d(x,y) - d(x,z) , d(z,x) - d(y,x)\} \leq d(z,y)$ 
instead of $|d(x,y)-d(x,z)| \leq d(y,z)$.
On the other hand, there are more continuous paths with $\tau^\pm$ than $\tau$: in $\tau^-$ (resp. $\tau^+$), $p:\R\rightarrow M$ is continuous at $t\in\R$ if, given $\epsilon>0$, there is $\delta>0$ with $d(p(s),p(t))<\epsilon$ (resp. $d(p(t),p(s))<\epsilon$) for all $s\in (t-\delta,t+\delta)$, while $\tau$ requires both conditions.

\subsection{Metrics on Grassmannians}\label{sc:Distances on Grassmannians}

Usual metrics on $G_p(X)$ \cite{Edelman1999,Qiu2005,Stewart1990,Ye2016} are obtained by embedding it in other metric spaces, or as geodesic distances,
but are ultimately based on the distances of Fig.\,\ref{fig:distances}, fitting into the scheme of \Cref{tab:metrics same dim}.

Let $V,W\in G_p(X)$ have principal angles $\theta_1\leq\cdots\leq\theta_p$ and associated principal bases $(e_1,\ldots,e_p)$ and $(f_1,\ldots,f_p)$.
Also, let $\mathbf{E}$ and $\mathbf{F}$ be $n\times p$ matrices formed with these vectors, $A=e_1\wedge\cdots\wedge e_p$, $B=f_1\wedge\cdots\wedge f_p$,
$\|\cdot\|_F$ be the Frobenius norm, and $\|\cdot\|_{\text{op}}$ the operator norm.
We have:

\begin{table}[]
	\centering
	\renewcommand{\arraystretch}{1}
	\begin{tabular}{cccc}
		\toprule
		Type & angular & chordal & gap \\
		\cmidrule(lr){1-1} \cmidrule(lr){2-2} \cmidrule(lr){3-3} \cmidrule(lr){4-4}
		$l^2$ & \makecell{geodesic \\ $d_g = \sqrt{\sum_{i=1}^p \theta_{[e_i],[f_i]}^2}$} & \makecell{chordal Frobenius \\ $d_{cF} = \sqrt{\sum_{i=1}^p c_{[e_i],[f_i]}^2}$} & \makecell{projection Frobenius \\ $d_{pF} = \sqrt{\sum_{i=1}^p g_{[e_i],[f_i]}^2}$}
		\\		
		\cmidrule(lr){1-1} \cmidrule(lr){2-2} \cmidrule(lr){3-3} \cmidrule(lr){4-4}
		$\wedge$ & \makecell{ Fubini-Study \\ $d_{FS} = \theta_{\bigwedge^p V,\bigwedge^p W}$} & \makecell{chordal-$\wedge$ \\ $d_{c\wedge} = c_{\bigwedge^p V,\bigwedge^p W}$} & \makecell{Binet-Cauchy \\ $d_{BC} = g_{\bigwedge^p V,\bigwedge^p W}$}
		\\
		\cmidrule(lr){1-1} \cmidrule(lr){2-2} \cmidrule(lr){3-3} \cmidrule(lr){4-4}
		max & \makecell{Asimov \\ $d_{A} = \theta_{[e_p],[f_p]}$} & \makecell{chordal 2-norm \\ $d_{c2} = c_{[e_p],[f_p]}$} & \makecell{projection 2-norm \\ $d_{p2} = g_{[e_p],[f_p]}$}
		\\
		\bottomrule
	\end{tabular}
	\caption{Metrics on $G_p(X)$ in terms of the angular, chordal or gap distances between principal lines $[e_i]$ and $[f_i]$ of $V$ and $W$, or between lines $\bigwedge^p V$ and $\bigwedge^p W$ in $\bigwedge X$}
	\label{tab:metrics same dim}
\end{table} 

\begin{enumerate}[1)]
	\item \emph{$l^2$ metrics}: given by the $l^2$ norm of the vector formed by the angular, chordal or gap distances of principal lines $[e_i]$ and $[f_i]$.
	\begin{enumerate}[a)]
		\item \emph{Geodesic} \cite{Dhillon2008,Kozlov2000III,Wong1967}:
			\CITE{Zhang2018,Zuccon2009}
		$d_g = \sqrt{\sum_{i=1}^p \theta_i^2}$ is the canonical metric, being the geodesic distance for the unique\footnote{Except in $G_2(\R^4)$ \cite[p.\,2249]{Kozlov2000I}, \cite[p.\,591]{Wong1967}.} 
		(up to scaling) Riemannian metric invariant by $U(X)$, obtained by identifying the tangent space at $V$ with $\Hom(V,V^\perp)$ and using the Hilbert-Schmidt product.
			\SELF{$\inner{S,T} = \Tr(S^*T)$ \\ Bendokat2020 com fator $\frac12$}
		Also called \emph{Grassmann distance} \cite{Deza2016,Ye2016}.
			\CITE{Used in Lerman2011}
			\SELF{max $\frac{\pi\sqrt{p}}{2}$, when $V\perp W$  (if $X$ large enough)}
		
		\item \label{it:cF}
		\emph{Chordal Frobenius} \cite{Edelman1999}: 
			\CITE{$\rho_b$ in {Stewart1990} p.95,99. \\ Paige1984}
		embedding $G_p(X)$ in $\F^{n\times p}$ with $\|\cdot\|_F$,
			\SELF{decomp in orthon basis} 
		$d_{cF}  = \|\mathbf{E}-\mathbf{F}\|_F = \sqrt{\sum_{i=1}^p \|e_i - f_i\|^2} = 2\sqrt{\sum_{i=1}^p \sin^2 \frac{\theta_i}{2}}$.
			\SELF{$= \sqrt{2p-2\sum_{i=1}^p \cos\theta_i}$; max $\sqrt{2p}$, if $V\perp W$}
		Also called \emph{Procrustes distance} \cite{Chikuse2012,Hamm2008,Turaga_2008}. 
		
		\item \label{it:pF}
		\emph{Projection Frobenius} \cite{Draper2014,Hamm2008}: 
		embedding $G_p(X)$ in the set of projection matrices with $\|\cdot\|_F$, we find
		$d_{pF} = \frac{1}{\sqrt{2}} \|P_V-P_W\|_F = \sqrt{\sum_{i=1}^p \|e_i - P_W e_i\|^2} = \sqrt{\sum_{i=1}^p \sin^2 \theta_i}$.
			\SELF{max $\sqrt{p}$, when $V\perp W$}
		Often called \emph{chordal distance}, due to another embedding in a sphere \cite{Barg2002,Conway1996,Dhillon2008,Ye2016}.
			\CITE{\cite{Deza2016}: Frobenius distance is $\|P_V-P_W\|_F$}
	\end{enumerate}

	\item \emph{$\wedge$ metrics}: obtained via the \Plucker\ embedding, with the angular, chordal or gap distance of $\bigwedge^p V = \Span\{A\}$ and $\bigwedge^p W = \Span\{B\}$.
	\begin{enumerate}[a)]
		\item \emph{Fubini-Study} \cite{Dhillon2008,Love2005,Stewart1990}: 
		$d_{FS} =  \gamma_{A,B} = \cos^{-1}(\prod_{i=1}^p \cos\theta_i)$.
			\CITE{$\rho_\theta$ in {Stewart1990} p.96,99. Parece surgir em Lu1963}
			\SELF{max $\frac{\pi}{2}$, when $V\pperp W$}
		It is a geodesic distance through the ambient space $\PR(\bigwedge^p X)$, and an angle that measures volume contraction \cite{Gluck1967}.

		\item \emph{chordal-$\wedge$}: $d_{c\wedge} = \|A-B\| = 2\sin \frac{\gamma_{A,B}}{2} = \sqrt{2-2\prod_{i=1}^p \cos\theta_i}$.
		This metric does not seem to have been considered before.
		
		\item \emph{Binet-Cauchy} \cite{Hamm2008,Vishwanathan2006,Wolf2003}:
		 $d_{BC} = \|A-P_W A\| = \sin\gamma_{A,B} = \sqrt{1-\prod_{i=1}^p \cos^2\theta_i}$.
			\SELF{$= \|\Psi_A - P_{\Psi_B} \Psi_A\|$, \emph{Grassmann vector}  $\Psi_A$ {Wolf2003} formed by \Plucker\ coord in orthonormal basis;
			max  $1$, when $V\pperp W$}
			\CITE{$\rho_s$ in {Stewart1990} p.96,99. Parece surgir em Lu1963}
	\end{enumerate}

	\item \emph{max metrics}: maximum angular, chordal or gap distance of principal lines  (so, for $[e_p]$ and $[f_p]$).
	\begin{enumerate}[a)]
		\item \emph{Asimov} \cite{Asimov1985}: $d_A = \theta_p$ is the geodesic distance for a Finsler metric 
			\SELF{dada por norma no espaço tangente, simétrica ou não}
		given by $\|\cdot\|_{\text{op}}$ in the tangent space $\Hom(V,V^\perp)$  \cite{Weinstein2000}.
			\SELF{largest angular dist. from $S(V)$ to $S(W)$; max $\frac\pi2$ when $V\pperp W$; cos is largest semi-axis of $P_W(S(V))$}
		
		\item \emph{Chordal 2-norm} \cite{Barg2002}: 
		as in (\ref{it:cF}), but with $\|\cdot\|_{\text{op}}$, we have
		$d_{c2} = \|\mathbf{E}-\mathbf{F}\|_{\text{op}} = \|e_p-f_p\| = 2\sin \frac{\theta_p}{2}$.
			\SELF{$= \max\limits_{v\in S(V)} \min\limits_{w \in S(W)} \|v-w\|$ largest distance from $S(V)$ to $S(W)$ (Hausdorff dist.); max $\sqrt{2}$, when $V\pperp W$}
		The `2-norm' refers to the norm in $X$.
		Also called \emph{spectral distance} \cite{Deza2016,Ye2016}.
		
		\item \emph{Projection 2-norm} \cite{Edelman1999}:
			\CITE{Barg2002,Ye2016}	
		as in (\ref{it:pF}), but with $\|\cdot\|_{\text{op}}$, we have
		$d_{p2} = \|P_V-P_W\|_{\text{op}} = \|e_p - P_W e_p\| = \sin \theta_p = \max\limits_{v\in V, \|v\|=1} \|v-P_W v\|$.
			\SELF{largest dist. from $S(V)$ to $W$; max $1$ when $V\pperp W$; in normed spaces gap is not metric {Kato1995}}
		Also called \emph{gap} \cite{Kato1995,Stewart1990} or \emph{min-correlation} \cite{Hamm2008}.
			\CITE{\emph{containment gap} or \emph{projection distance} in {Deza2016}}
	\end{enumerate}
\end{enumerate}

Inequalities in \Cref{sc:Distance inequalities} show these metrics (Fig.\,\ref{fig:distances Gp}) give the same topology on $G_p(X)$,
and decrease as we move right (if $V \neq W$) or down (if $\dim(V\cap W) < p-1$) in \Cref{tab:metrics same dim}.
For small principal angles, $l^2$ and $\wedge$ metrics converge as\-ymp\-tot\-i\-cal\-ly to $d_g$, so their embeddings are isometric (in the Riemannian sense) \cite{Edelman1999}.

\begin{figure}
	\centering
	\begin{subfigure}[b]{0.325\textwidth}
		\includegraphics[width=\textwidth]{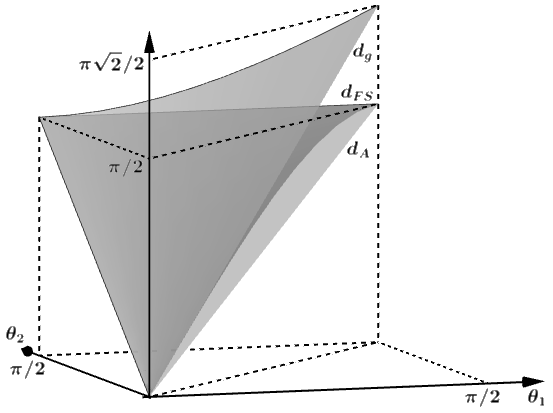}
		\caption{angular metrics}
		\label{fig:spherical triangle inequality_1}
	\end{subfigure}
	\begin{subfigure}[b]{0.325\textwidth}
		\includegraphics[width=\textwidth]{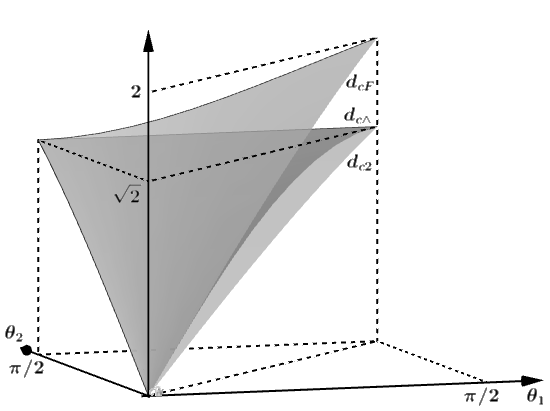}
		\caption{chordal metrics}
		\label{fig:spherical triangle inequality_2}
	\end{subfigure}
	\begin{subfigure}[b]{0.325\textwidth}
		\includegraphics[width=\textwidth]{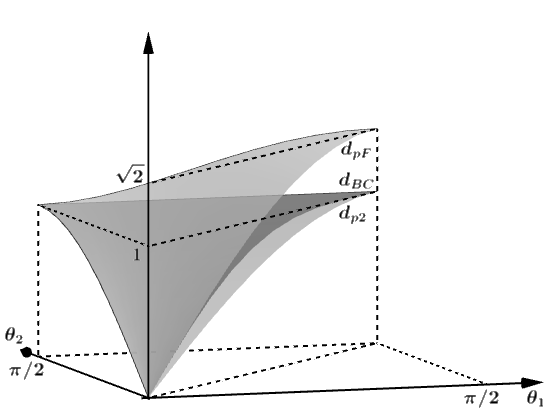}
		\caption{gap metrics}
		\label{fig:spherical triangle inequality_2}
	\end{subfigure}
	\caption{Metrics on $G_2(X)$ as functions of principal angles $\theta_1\leq\theta_2$}
	\label{fig:distances Gp}
\end{figure}

The $l^2$ metrics are maximized when $V \perp W$ (assuming $\dim X \geq 2p$).
	\SELF{To allow $\theta_i=\frac\pi2$ for all $i$}
The $\wedge$ ones, when $V\pperp W$, so once $\theta_p=\frac\pi2$
they ignore the other $\theta_i$'s.
Max metrics also maximize when $V\pperp W$, but always take only $\theta_p$ into account, being unsuitable for applications in which many small differences between subspaces can be more relevant than a single large one.

We note that the following are not metrics on $G_p(X)$:

\begin{itemize}
	\item The \emph{max-correlation} \cite{Hamm2008} or \emph{spectral distance} \cite{Dhillon2008} $d_{mc} = \sin \theta_1$ 
		\SELF{smallest distance from $S(V)$ to $W$; max $1$ when $V\perp W$}
	does not satisfy a triangle inequality, and $d_{mc} = 0 \Leftrightarrow V\cap W \neq \{0\}$.
	
	\item The \emph{Martin metric} for ARMA
		\CITE{Auto-Regressive Moving Average} 
	models \cite{Martin_2000} is presented in \cite{Deza2016,Ye2016} as a metric $d_M = \sqrt{-\log \prod_{i=1}^p\cos^2\theta_i}$ for subspaces.
	But this formula is obtained in \cite{De_Cock_2002} for specific subspaces associated to the models.
		\CITE{spanned by vector of the form $(1,\alpha,\alpha^2,\ldots)$ with $|\alpha|<1$}
	For arbitrary subspaces $d_M$ does not satisfy a triangle inequality (\eg take lines in $\R^2$), and it is $\infty$ when $V\pperp W$.
\end{itemize}

\subsection{Distances on the full Grassmannian}\label{sc:Distances on the full Grassmannian}

Let $V\in G_p(X)$ and $W\in G_q(X)$ have principal angles $\theta_1\leq\cdots\leq\theta_{\min\{p,q\}}$ and associated principal bases $(e_1,\ldots,e_p)$ and $(f_1,\ldots,f_q)$.
On the full Grassmannian $G(X)$ we have the following distances:

\begin{itemize}	
	\item \emph{Full Fubini-Study metric}: obtained via the full \Plucker\ embedding, it extends $d_{FS}$ trivially: as blades of distinct grades are orthogonal, $d_{FS}(V,W) = \frac{\pi}{2}$ whenever $p\neq q$.
	
	\item \emph{Containment gap} \cite{Beattie2005,Kato1995}: asymmetric metric generalizing $d_{p2}$ via 
		\SELF{$\delta(U,W) = \max\limits_{\|u\|=1} \|u-P_W u\| \leq \max\limits_{\|u\|=1} \|u-P_W P_V u\| \leq \max\limits_{\|u\|=1} (\|u-P_V u\| + \|P_V u-P_W P_V u\|) \leq \delta(U,V) + \max\limits_{\|u\|=1} \|\frac{P_V u}{\|P_V u\|}-P_W \frac{P_V u}{\|P_V u\|}\| \leq \delta(U,V) + \max\limits_{\|v\|=1} \|v-P_W v\| = \delta(U,V) + \delta(V,W)$}
	\begin{equation*}
		\delta(V,W) = \max\limits_{v\in V, \|v\|=1} \|v-P_W v\| =
		\begin{cases}
			\sin \theta_p &\text{if } p\leq q, \\
			1 &\text{if } p>q.
		\end{cases}
	\end{equation*}
	Since $\delta(V,W) =0 \Leftrightarrow V\subset W$, it shows how far $V$ is from being contained in $W$.
	Its maximum occurs when $V\pperp W$.
		
	\item \emph{Gap} \cite{Kato1995}: 
		\CITE{Stewart1990 não pois só trata equal dim}
	$\hat{\delta}(V,W) = \max\{\delta(V,W),\delta(W,V)\} = \|P_V - P_W\|_{\text{op}}$.
		\SELF{$\|T\|=\sup\{\frac{\|Tx\|}{\|x\|}\}$ \\ To prove, assume $p=q$, then $\|(P_V - P_W)e_i\| = \|e_i - P_W e_i\|\leq \delta(V,W)$ and $\|(P_V - P_W)e_i^\perp\| = \|P_W e_i^\perp\| = \sin\theta_i \leq \delta(V,W)$}	
	It is a metric extending $d_{p2}$ trivially, with $\hat{\delta}(V,W) = 1$ for $p\neq q$.

	\item \emph{Projection Frobenius} \cite{Basri2011,Draper2014,Pereira2022}: $d_{pF} = \sqrt{\sum_{i=1}^{\min\{p,q\}} \sin^2 \theta_i}$ does not satisfy a triangle inequality (\eg take two lines and their plane).

	\item \emph{Directional distance} \cite{Wang_2006}: generalizes $d_{pF}$ via
		\SELF{Instead of the $e_i$'s one can use any orthonormal basis of $V$.}
		\SELF{$\min\limits_{W'\in\Omega_p^-(W)}\{d_{pF}(V,W')^2\}$ if $p\leq q$, \\	 $\max\limits_{W'\in\Omega_p^+(W)}\{d_{pF}(V,W')^2\}$ if $p>q$}
	\begin{equation*}
		\vec{d}(V,W)^2 = \sum_{i=1}^p \|e_i-P_W e_i\|^2 =
		\begin{cases}
			\sum_{i=1}^p \sin^2 \theta_i &\text{if } p\leq q, \\
			p-q+\sum_{i=1}^q \sin^2 \theta_i &\text{if } p>q.
		\end{cases}
	\end{equation*}
	It is not clear whether it satisfies a triangle inequality. 
	For fixed $p$ and $q$, its minimum is $\sqrt{\max\{0,p-q\}}$, if $V\subset W$ or $W \subset V$.

	\item \emph{Symmetric distance} \cite{Sun_2007,Wang_2006}:
		\CITE{Bagherinia2011, Figueiredo2010, Sharafuddin2010, Zuccon2009}
	$d_s (V,W) = \max\{\vec{d}(V,W),\vec{d}(W,V)\} = \frac{1}{\sqrt{2}} \|P_V-P_W\|_F = \sqrt{|p-q|+\sum_{i=1}^{\min\{p,q\}} \sin^2 \theta_i}$.
	It is a metric.
	For fixed $p$ and $q$, its minimum is $\sqrt{|p-q|}$, if $V\subset W$ or $W\subset V$.
\end{itemize}

For $p\neq q$, $d_{FS}$ and $\hat{\delta}$ have fixed values, so do not give any new information.
As $\delta$ and $\hat{\delta}$ do not take all $\theta_i$'s into account, they are rather rough distances.
Lack of a triangle inequality limits the usefulness of $d_{pF}$ and (possibly) $\vec{d}$.
For $p\neq q$, $d_s$ is a nontrivial metric,
and its nonzero minimum may be useful if subspaces of distinct dimensions must be kept apart.
But if they are similar when one is almost contained in the other,
it is inconvenient to have this expressed by $d_s(V,W) < \sqrt{|p-q|} + \epsilon$, specially if $p$ or $q$ are not known beforehand (\eg if the subspaces are approximate representations obtained by truncating the spectrum of an operator).
	\CITE{Gruber2009}
Other metrics obtained in \cite{Ye2016} have similar problems.

\section{Asymmetric angle}\label{sc:asymmetric angle}

We first describe an angle between subspaces of arbitrary dimensions, and later we show it extends $d_{FS}$ as an asymmetric metric on $G(X)$.

\begin{definition}
	Let $V,W\in G(X)$, and $A$ be a blade representing $V$.
	The \emph{asymmetric angle}\footnote{Formerly called \emph{Grassmann angle} \cite{Mandolesi_Grassmann,Mandolesi_Pythagorean}.} 
	from $V$ to $W$ is $\Theta_{V,W} = \cos^{-1} \frac{\|P_W A\|}{\|A\|} \in[0,\frac{\pi}{2}]$.
		\SELF{$\Theta_{\{0\},W}=0$ for any $W$, \\ $\Theta_{V,\{0\}}=\frac{\pi}{2}$ for $V\neq\{0\}$.}
\end{definition}

Similar angles \cite{Gluck1967,Gunawan2005,Jiang1996} that project from the smaller to the larger subspace do not satisfy a triangle inequality.
We project from $V$ to $W$ even if $\dim V>\dim W$, in which case $\Theta_{V,W}=\frac{\pi}{2}$.
As a result, in general $\Theta_{V,W}\neq \Theta_{W,V}$ if dimensions are different. 
This reflects the dimensional asymmetry of subspaces, and lets the angle carry some dimensional data ($\Theta_{V,W}\neq\frac{\pi}{2} \Rightarrow \dim P_W(V) = \dim V \leq \dim W$),
what simplifies proofs (\eg in \Cref{pr:full Grassmannian}).
Many of our results, like the triangle inequality, only hold in full generality thanks to the angle asymmetry. 
	\SELF{\ref{pr:spherical Pythagorean theorem}, \ref{pr:Theta},\ref{pr:products Theta}, 
	\ref{pr:formula any base dimension}, \ref{pr:formula complementary angle bases}}

By \Cref{pr:relative position}, having equal asymmetric angles does not mean pairs of subspaces can be related via $U(X)$, so $\Theta_{V,W}$ does not describe completely the relative position of $V$ and $W$.
What it does is codify information about projection factors \cite{Mandolesi_Pythagorean}.

\begin{definition}
	Let $V,W \in G(X)$, $k=\dim V_\R$ and $|\cdot|_k$ be the $k$-dimensional Lebesgue measure in $X_\R$. 
	The \emph{projection factor} of $V$ on $W$ is $\pi_{V,W}=\frac{|P_W(S)|_k}{|S|_k}$, for a Lebesgue measurable set $S\subset V$ with $|S|_k\neq 0$.
\end{definition}

\begin{proposition}\label{pr:projection factor Theta}
	$\pi_{V,W} = \begin{cases}
		\cos\Theta_{V,W} &\text{if $\F=\R$}, \\
		\cos^2\Theta_{V,W} &\text{if $\F=\C$.}
	\end{cases}$
\end{proposition}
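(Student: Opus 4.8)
The plan is to read both quantities as volume-contraction ratios of the orthogonal projection $P_W$ and match them directly, using the geometric meaning of blade norms recalled in \Cref{sc:preliminaries}. Fix a blade $A=v_1\wedge\cdots\wedge v_p$ representing $V$ (the ratio $\|P_WA\|/\|A\|$ defining $\Theta_{V,W}$ is unchanged by rescaling $A$, so any spanning blade works). Since the extended projection satisfies $P_W(A\wedge B)=P_WA\wedge P_WB$, induction gives $P_WA=P_Wv_1\wedge\cdots\wedge P_Wv_p$, so $P_WA$ is the blade built from the projected edge vectors.

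For $\F=\R$, let $S$ be the parallelotope spanned by $v_1,\ldots,v_p$. Then $\|A\|=|S|_p$ and $\|P_WA\|=|P_W(S)|_p$ are exactly the $p$-volumes of $S$ and of its projection, whence $\cos\Theta_{V,W}=\|P_WA\|/\|A\|=|P_W(S)|_p/|S|_p$. It remains to identify this with $\pi_{V,W}$: because $P_W|_V$ is $\R$-linear, the ratio $|P_W(S')|_p/|S'|_p$ is the same constant (the volume factor of the linear map) for every measurable $S'\subset V$ of positive measure, including the value $0$ when $P_W|_V$ is non-injective (the case $p>q$, where also $\Theta_{V,W}=\frac{\pi}{2}$). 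This is precisely what the definition of $\pi_{V,W}$ asserts, so $\pi_{V,W}=\cos\Theta_{V,W}$.

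For $\F=\C$, take $S$ to be the real $2p$-parallelotope spanned by $v_1,\im v_1,\ldots,v_p,\im v_p$, so that $\|A\|^2=|S|_{2p}$. Complex linearity of $P_W$ gives $P_W(\im v_i)=\im P_Wv_i$, so the projected edges $P_Wv_1,\im P_Wv_1,\ldots,P_Wv_p,\im P_Wv_p$ span exactly $P_W(S)$ and, by the same norm interpretation applied to the blade $P_WA$, we get $\|P_WA\|^2=|P_W(S)|_{2p}$. Dividing, $\cos^2\Theta_{V,W}=\|P_WA\|^2/\|A\|^2=|P_W(S)|_{2p}/|S|_{2p}$, and the same linear-map argument (now applied to $P_W|_{V_\R}$, with measure $|\cdot|_{2p}$ on $X_\R$) identifies the right-hand side with $\pi_{V,W}$; the degenerate case $p>q$ again yields $0=\cos^2\frac{\pi}{2}$.

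The only nontrivial point is the well-definedness step folded into both cases: that the projection factor, defined via an arbitrary measurable $S$, equals the volume ratio of the specific parallelotope chosen. I would justify it once, citing the standard fact that an $\R$-linear map $T$ scales $k$-dimensional Lebesgue measure by a fixed factor $\sqrt{\det(T^{*}T)}$ (the product of its singular values, or $0$ if $T$ is not injective), which simultaneously handles the nondegenerate and degenerate ranges of $\dim V$ versus $\dim W$ and removes any dependence on $S$. Everything else is bookkeeping with the blade-norm volume interpretations and the complex-linearity identity $P_W\im v=\im P_Wv$.
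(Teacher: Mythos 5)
Your proof is correct and takes essentially the same route as the paper's: the paper's one-line proof simply observes that $\|A\|$ and $\|P_W A\|$ (squared, if $\F=\C$) are the $k$-volumes of a parallelotope and of its projection, which is exactly your argument. The details you supply --- the multiplicativity $P_W A = P_W v_1\wedge\cdots\wedge P_W v_p$, the identity $P_W(\im v)=\im P_W v$, and the well-definedness of $\pi_{V,W}$ via the constant volume-scaling factor of a linear map --- are precisely what the paper leaves implicit in the word \emph{Immediate}.
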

\begin{proof}
	Immediate, as $\|A\|$ and $\|P_W A\|$ (squared, if $\F=\C$) are $k$-volumes of a parallelotope and its projection.
\end{proof}

So $\cos\Theta_{V,W}$ (squared, if $\F=\C$) measures the contraction of volumes orthogonally projected from $V$ to $W$ (Fig.\,\ref{fig:projecao}).

\begin{figure}
	\centering
	\includegraphics[width=0.6\linewidth]{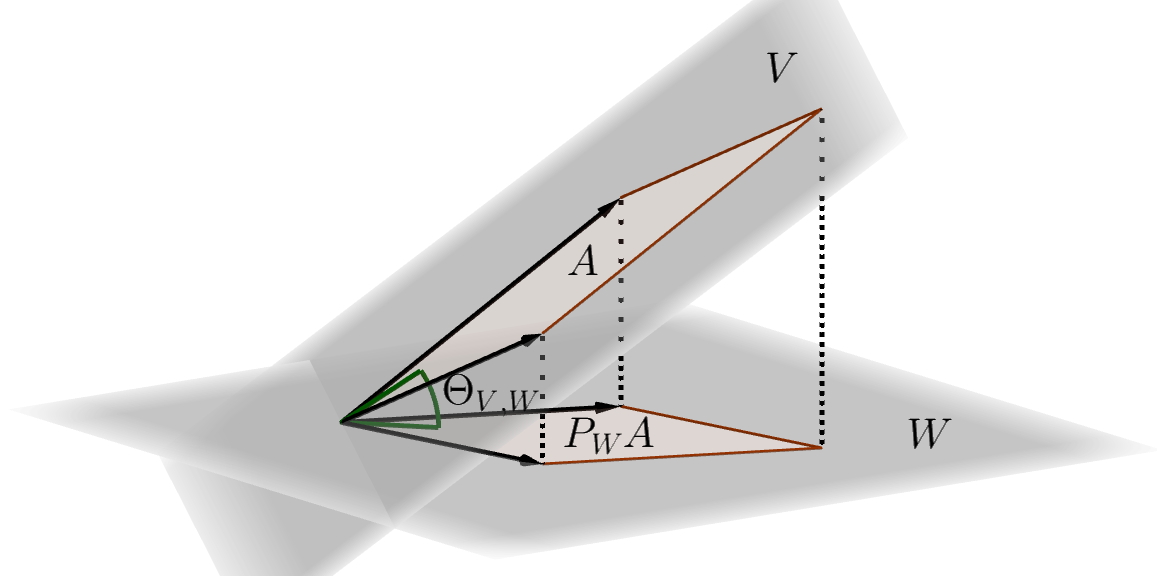}
	\caption{The area of $A$ contracts by $\cos\Theta_{V,W}$ when projected on $W$}
	\label{fig:projecao}
\end{figure}

\begin{corollary}\label{pr:Theta R C}
	$\cos\Theta_{V_\R,W_\R} = \cos^2 \Theta_{V,W}$, if $\F=\C$.
\end{corollary}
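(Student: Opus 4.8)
The plan is to prove the identity by routing through the projection factors of \Cref{pr:projection factor Theta}, applying its complex case to the pair $V,W$ and its real case to the pair $V_\R,W_\R$, and then showing that the two projection factors coincide.

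First I would observe that $V_\R$ and $W_\R$ are genuine real subspaces of the real inner product space $X_\R$ (with inner product $\operatorname{Re}\inner{\cdot,\cdot}$), so \Cref{pr:projection factor Theta} applies to them in its $\F=\R$ form, giving $\pi_{V_\R,W_\R} = \cos\Theta_{V_\R,W_\R}$. Here the underlying real space of $V_\R$ is $V_\R$ itself, so the Lebesgue measure used is the $k$-dimensional one with $k=\dim V_\R$, exactly the $k$ appearing in the definition of $\pi_{V,W}$. Likewise the complex case applied to $V,W$ gives $\pi_{V,W}=\cos^2\Theta_{V,W}$.

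The key step is to identify the two orthogonal projections. I would show that the real orthogonal projection $P_{W_\R}\colon X_\R\to W_\R$ (\wrt $\operatorname{Re}\inner{\cdot,\cdot}$) coincides, as a real-linear map, with the complex orthogonal projection $P_W$. This holds precisely because $W$ is a complex subspace: for any $x\in X$ and $w\in W$ we have $\inner{x-P_W x,\,w}=0$, hence $\operatorname{Re}\inner{x-P_W x,\,w}=0$; since $P_W x\in W=W_\R$ while $x-P_W x$ is $\operatorname{Re}$-orthogonal to all of $W_\R$, uniqueness of orthogonal projections forces $P_{W_\R}=P_W$. Consequently, as $V$ and $V_\R$ are the same set of points and the two projections agree on it, the projection factors coincide: both $\pi_{V,W}$ and $\pi_{V_\R,W_\R}$ equal the ratio $|P_W(S)|_k/|S|_k$ for a measurable $S\subset V$ of positive $k$-measure.

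Chaining the three facts yields $\cos^2\Theta_{V,W}=\pi_{V,W}=\pi_{V_\R,W_\R}=\cos\Theta_{V_\R,W_\R}$, as claimed. The only point requiring genuine care is the equality $P_{W_\R}=P_W$, which is where the complex structure of $W$ is actually used; everything else is a direct translation through the definition of the projection factor. I expect this identification of the real and complex orthogonal projections to be the main (though routine) obstacle, and working entirely through $\pi$ lets me avoid ever having to construct an explicit blade representing $V_\R$ in $\bigwedge X_\R$.
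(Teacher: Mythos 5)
Your proof is correct and takes essentially the same approach as the paper: both deduce the identity by equating the projection factors $\pi_{V,W}$ and $\pi_{V_\R,W_\R}$ and applying \Cref{pr:projection factor Theta} in its complex and real forms. The only difference is that you spell out the identification $P_{W_\R}=P_W$ of the real and complex orthogonal projections, a detail the paper's one-line proof leaves implicit.
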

\begin{proof}
	$\pi_{V,W} = \pi_{V_\R,W_\R}$, as the Lebesgue measure is taken in $X_\R$.
\end{proof}

So, in general, $\Theta_{V_\R,W_\R} > \Theta_{V,W}$ if $\F=\C$, what may seem strange since $X_\R \cong X$ as metric spaces. 
An explanation is that these angles are different ways to encode the same projection factor.
One might say $\Theta_{V,W}$ should be defined as equal to $\Theta_{V_\R,W_\R}$, but formulas would differ if $\F=\C$: \eg $|\inner{A,B}| = \|A\| \|B\|\sqrt{\cos \Theta_{V_\R,W_\R}}$ in \Cref{pr:products Theta}.
Another inconvenient is that working in $X_\R$ increases dimensions and wastes  symmetries of the complex structure, leading to repeated principal angles (see the observations and examples after \Cref{pr:Pythagorean identity}).
Quantum theory gives us another reason to prefer $\Theta_{V,W}$:
the Bures angle \cite{Bengtsson2017} for pure quantum states $\psi$ and $\phi$ corresponds to $\Theta_{\C\psi,\C\phi}$.

\begin{proposition}\label{pr:Theta prod cos}
	For nonzero $V\in G_p(X)$ and $W\in G_q(X)$,
	with principal angles $\theta_1,\ldots,\theta_{\min\{p,q\}}$,
	\begin{equation}\label{eq:Theta prod cos}
		\cos\Theta_{V,W} = 
		\begin{cases}
			\prod_{i=1}^p \cos\theta_i &\text{if } p\leq q, \\
			0 &\text{otherwise.}
		\end{cases}
	\end{equation}
\end{proposition}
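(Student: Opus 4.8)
The plan is to exploit the freedom in the choice of blade representing $V$ and reduce everything to \Cref{pr:P ei}\ref{it:P e1...ep}. First I would fix associated principal bases $\beta_V=(e_1,\ldots,e_p)$ and $\beta_W=(f_1,\ldots,f_q)$ as in \Cref{df:principal}, and take $A=e_1\wedge\cdots\wedge e_p$ as the blade representing $V$. Since $\beta_V$ is orthonormal, $\inner{A,A}=\det(\inner{e_i,e_j})=1$, so $\|A\|=1$; this is the convenient normalization that makes the ratio defining $\Theta_{V,W}$ just $\|P_W A\|$. I would first note (or recall) that $\Theta_{V,W}$ does not depend on which blade represents $V$: any other such blade is $\lambda A$ with $\lambda\neq 0$, and the factor $|\lambda|$ cancels in $\|P_W A\|/\|A\|$, so I am free to use this particular $A$.

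Next I would apply \Cref{pr:P ei}\ref{it:P e1...ep} directly. If $p>q$ it gives $P_W A=0$, hence $\|P_W A\|=0$ and $\cos\Theta_{V,W}=0$, which is the second case. If $p\leq q$ it gives $P_W A=\cos\theta_1\cdots\cos\theta_p\,f_1\wedge\cdots\wedge f_p$. Because $\beta_W$ is orthonormal, $\|f_1\wedge\cdots\wedge f_p\|=1$, and since every $\theta_i\in[0,\frac{\pi}{2}]$ each cosine is nonnegative, so $\|P_W A\|=\prod_{i=1}^p\cos\theta_i$. Dividing by $\|A\|=1$ yields $\cos\Theta_{V,W}=\prod_{i=1}^p\cos\theta_i$, the first case.

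There is no real obstacle here: the content is carried entirely by \Cref{pr:P ei}\ref{it:P e1...ep}, which identifies $P_W A$ explicitly on the principal blade, and the proposition is essentially a restatement of that identity in terms of $\cos\Theta_{V,W}$. The only points requiring a word of care are the well-definedness that licenses the choice of $A$, and the observation that $\cos\theta_i\geq 0$, so that the product of cosines is genuinely the norm $\|P_W A\|$ and not merely its absolute value.
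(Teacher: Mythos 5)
Your proposal is correct and takes essentially the same route as the paper: the paper's entire proof is the one-line citation of \Cref{pr:P ei}\ref{it:P e1...ep}, which is exactly the lemma your argument rests on. You simply make explicit the details the paper leaves implicit (independence of the choice of representing blade, the normalization $\|A\|=\|f_1\wedge\cdots\wedge f_p\|=1$, and $\cos\theta_i\geq 0$ so the product really is the norm), all of which are fine.
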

\begin{proof}
	Follows from \Cref{pr:P ei}\ref{it:P e1...ep}.	
\end{proof}

In general $\Theta_{V,W} > \theta_i \ \forall i$, so no line of $V$ makes such angle with $W$.
	\SELF{$\Theta_{V,W}\geq \theta_m$, equality iff $\theta_m=\frac{\pi}{2}$, or $\dim V\leq\dim W$ and $\theta_i=0$ for all $i<m$}
This formula gives another way to look at \Cref{pr:projection factor Theta}:
if $\F=\R$, each principal axis $\R e_i\subset V$ projected to $W$ contracts by $\cos\theta_i$, so $p$-volumes contract by $\cos\Theta_{V,W}$;
if $\F=\C$, each $\cos\theta_i$ describes the contraction of 2 real axes, $\R e_i$ and $\R(\im e_i)$, so $2p$-volumes in $V$ contract by $\cos^2\Theta_{V,W}$.

\begin{example}\label{ex:Theta > theta}
	In \Cref{ex:real principal angles}, all lines in $V$ make a $45^\circ$ angle with $W$, but $\Theta_{V,W}= \cos^{-1}(\frac{\sqrt{2}}{2}\cdot\frac{\sqrt{2}}{2}) = 60^\circ$, so that, when projected from $V$ to $W$, lengths contract by $\frac{\sqrt{2}}{2}$, areas by $\frac12$.
	As $\dim W > \dim V$, volumes vanish when projected from $W$ to $V$, and $\Theta_{W,V} = 90^\circ$.
\end{example}

\begin{example}\label{ex:complex asymmetric angle}
	In \Cref{ex:complex principal angles},
	$\Theta_{V,W}=\cos^{-1}(\frac{\sqrt{2}}{2}\cdot\frac{1}{2})\cong 69.3^\circ$, while $\Theta_{V_\R,W_\R}=\cos^{-1}(\frac{\sqrt{2}}{2}\cdot\frac{\sqrt{2}}{2}\cdot\frac{1}{2}\cdot\frac{1}{2}) \cong 82.8^\circ$. 
	Both angles convey the same information, that $4$-volumes in $V$ contract by a factor $\cos^2\Theta_{V,W}=\cos \Theta_{V_\R,W_\R}=\frac{1}{8}$ when projected on $W$.
\end{example}

For equal dimensions, $\Theta_{V,W}$ is symmetric, equals the Fubini-Study metric (so it is an angle between lines in $\bigwedge X$), and is related to the Binet-Cauchy and chordal-$\wedge$ metrics:

\begin{corollary}\label{pr:Theta equal dim}
	If $\dim V = \dim W$ then $\Theta_{V,W} = \Theta_{W,V} = d_{FS}(V,W)$, $d_{BC}(V,W) = \sin \Theta_{V,W}$ and $d_{c\wedge}(V,W) = 2\sin\frac{\Theta_{V,W}}{2}$.
\end{corollary}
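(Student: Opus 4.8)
The plan is to reduce all four equalities to the single formula $\cos\Theta_{V,W} = \prod_{i=1}^p \cos\theta_i$ supplied by \Cref{pr:Theta prod cos}, which applies here because $p = q$. The degenerate case $V = W = \{0\}$ is trivial (both angles are $0$ and all three distances vanish), so I would assume $V,W$ nonzero throughout.

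First I would settle the symmetry $\Theta_{V,W} = \Theta_{W,V}$. The principal angles of \Cref{df:principal} are a symmetric invariant of the unordered pair $\{V,W\}$: the defining conditions $\inner{e_i,f_j}=0$ for $i\neq j$ and $\inner{e_i,f_i}=\cos\theta_i$ are unchanged when the roles of $V$ and $W$ (and of their two principal bases) are swapped. Hence the ordered pair $(W,V)$ has the same principal angles $\theta_1,\ldots,\theta_p$, and \Cref{pr:Theta prod cos} gives $\cos\Theta_{W,V} = \prod_{i=1}^p \cos\theta_i = \cos\Theta_{V,W}$. Since both angles lie in $[0,\frac{\pi}{2}]$, they coincide.

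Next comes the identification with Fubini-Study. Taking the unit blades $A = e_1\wedge\cdots\wedge e_p$ and $B = f_1\wedge\cdots\wedge f_p$ built from associated principal bases, the blade inner product gives $\inner{A,B} = \det(\inner{e_i,f_j}) = \prod_{i=1}^p \cos\theta_i \geq 0$, so $d_{FS}(V,W) = \gamma_{A,B} = \cos^{-1}\bigl(\prod_{i=1}^p \cos\theta_i\bigr)$. This is exactly $\cos^{-1}(\cos\Theta_{V,W})$ with $\Theta_{V,W}\in[0,\frac{\pi}{2}]$, whence $\Theta_{V,W} = d_{FS}(V,W)$.

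The last two identities are then immediate substitutions into the $\wedge$-metric formulas of \Cref{tab:metrics same dim}: since $\gamma_{A,B} = \Theta_{V,W}$, the Binet-Cauchy distance $d_{BC}(V,W) = \sin\gamma_{A,B}$ becomes $\sin\Theta_{V,W}$, and $d_{c\wedge}(V,W) = 2\sin\frac{\gamma_{A,B}}{2}$ becomes $2\sin\frac{\Theta_{V,W}}{2}$. I do not expect a genuine obstacle here, since the whole content is already packaged in \Cref{pr:Theta prod cos} and the table; the only step deserving an explicit word is the symmetry of the principal angles under exchange of the two subspaces, which is precisely what forces $\Theta$ to be symmetric in the equal-dimensional case.
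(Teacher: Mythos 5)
Your proof is correct and takes exactly the route the paper intends: the corollary is stated without proof as an immediate consequence of \Cref{pr:Theta prod cos} and the formulas $d_{FS} = \cos^{-1}(\prod_i \cos\theta_i)$, $d_{BC} = \sin\gamma_{A,B}$, $d_{c\wedge} = 2\sin\frac{\gamma_{A,B}}{2}$ from \Cref{tab:metrics same dim}, which is precisely what you spell out. Your added remarks (symmetry of the principal angles under exchange of $V$ and $W$, and alignment $\inner{A,B}\geq 0$ for principal blades so that $\gamma_{A,B}$ equals the common value of the two asymmetric angles) are just the details the paper leaves implicit.
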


If $p = \dim V \neq \dim W$, $\Theta_{V,W}$ is an angle between a line $\LL = \bigwedge^p V$ and a subspace $\WW = \bigwedge^p W$
(given, if $\LL=\Span\{A\}$, by $\theta_{\LL,\WW} = \theta_{A,P_{\WW}A}$). 
	\OMIT{$ = \cos^{-1} \frac{\|P_{\WW} A\|}{\|A\|}$}

\begin{proposition}\label{pr:angle external powers}
	$\Theta_{V,W}= \theta_{\bigwedge^p V,\bigwedge^p W}$ for $V,W\in G(X)$ and $p=\dim V$.
\end{proposition}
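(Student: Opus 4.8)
The plan is to show that both sides equal the common ratio $\|P_W A\|/\|A\|$ for a blade $A=e_1\wedge\cdots\wedge e_p$ representing $V$. By definition $\cos\Theta_{V,W}=\|P_W A\|/\|A\|$, where $P_W$ denotes the grade-preserving projection $P_{\bigwedge W}$ on the whole exterior algebra, so the task reduces to identifying this ratio with $\cos\theta_{\bigwedge^p V,\bigwedge^p W}$.

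The crux of the argument, and the one step that genuinely requires justification, is that the full-algebra projection of $A$ coincides with its projection onto the single graded piece $\WW=\bigwedge^p W$. I would argue this in either of two ways. Using $P_W(A\wedge B)=P_W A\wedge P_W B$, one gets $P_W A=P_W e_1\wedge\cdots\wedge P_W e_p$ with each $P_W e_i\in W$, whence $P_W A\in\bigwedge^p W=\WW$. Alternatively, since the distinct $\bigwedge^k X$ are mutually orthogonal, $A\in\bigwedge^p X$ is orthogonal to every $\bigwedge^k W$ with $k\neq p$, so in the orthogonal decomposition $\bigwedge W=\bigoplus_k\bigwedge^k W$ the projection of $A$ lands entirely in $\WW$. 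Either way $P_W A=P_\WW A$.

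Finally I would unfold the right-hand side. By the stated convention, $\theta_{\bigwedge^p V,\bigwedge^p W}=\theta_{\LL,\WW}=\theta_{A,P_\WW A}$ for $\LL=\Span\{A\}$. Because $\inner{A,P_\WW A}=\|P_\WW A\|^2\geq 0$, the multivectors $A$ and $P_\WW A$ are aligned, so $\theta_{A,P_\WW A}=\gamma_{A,P_\WW A}$ and $\cos\theta_{\LL,\WW}=\frac{|\inner{A,P_\WW A}|}{\|A\|\,\|P_\WW A\|}=\frac{\|P_\WW A\|}{\|A\|}$. Combining with the previous step yields $\cos\theta_{\bigwedge^p V,\bigwedge^p W}=\|P_W A\|/\|A\|=\cos\Theta_{V,W}$, and since both angles lie in $[0,\frac{\pi}{2}]$ they are equal.

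Everything beyond the identification $P_W A=P_\WW A$ is bookkeeping, and this uniform computation already absorbs the degenerate cases once the conventions are checked to match: when $P_\WW A=0$ (for instance $\dim W<p$) both sides give $\frac{\pi}{2}$ through $\theta_{v,0}=\frac{\pi}{2}$, while for $p=0$ one takes $A$ to be a nonzero scalar with $P_W A=A$, so both sides give $0$.
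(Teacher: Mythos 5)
Your proof is correct and takes essentially the same route as the paper's: both hinge on the identity $P_{\bigwedge^p W}A = P_W A$ for $A\in\bigwedge^p V$, and then identify $\theta_{\bigwedge^p V,\bigwedge^p W}=\theta_{A,P_W A}=\cos^{-1}\frac{\|P_W A\|}{\|A\|}=\Theta_{V,W}$. You simply spell out what the paper leaves implicit (the two justifications of the key identity, the alignment of $A$ with $P_{\WW}A$, and the degenerate cases).
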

\begin{proof}
	For $0\neq A \in \bigwedge^p V$, we have $P_{\bigwedge^p W} A = P_W A$, so that 
	$\Theta_{V,W} = \cos^{-1} \frac{\|P_W A\|}{\|A\|} = \theta_{A,P_W A} = \theta_{\bigwedge^p V,\bigwedge^p W}$.
\end{proof}

If $p > \dim W$ then $\WW=\{0\}$ and $\theta_{\LL,\WW} = \frac\pi2$, so this result reflects the angle asymmetry.
It gives another reason why $\Theta_{V_\R,W_\R} \neq \Theta_{V,W}$ if $\F=\C$:
these are angles in exterior algebras, and $\bigwedge(X_\R) \not\cong \bigwedge X$.

\begin{proposition}\label{pr:Theta}
	Let $V,W\in G(X)$.
	\begin{enumerate}[i)]
		\item $\Theta_{V,W}=0 \Leftrightarrow V\subset W$, and $\Theta_{V,W}=\frac \pi 2 \Leftrightarrow V \pperp W$. \label{it:Theta 0 pi2}
		\item $\Theta_{V,W}=\Theta_{V,P_W(V)}$.\label{it:Theta P(V)}
		\item $\Theta_{V,W}=\Theta_{V,W\oplus U}$ for any subspace $U\perp (V+W)$.\label{it:Theta W+U} 
		\item $\Theta_{V,W}=\Theta_{V',W'}$ for $V' = (V\cap W)^\perp \cap V$ and $W' = (V\cap W)^\perp \cap W$. \label{it:orth complem inter} 
		\item $\Theta_{T(V),T(W)} = \Theta_{V,W}$ for any $T\in U(X)$. \label{it:transformation}
		\item $\Theta_{V^\perp,W^\perp} = \Theta_{W,V}$. \label{it:Theta perp perp}
	\end{enumerate}
\end{proposition}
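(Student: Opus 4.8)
The plan is to reduce every item to the product-of-cosines formula of \Cref{pr:Theta prod cos} together with the elementary facts about principal angles and projected blades recorded earlier, dealing with the asymmetric and degenerate cases by hand. Throughout I would fix $V\in G_p(X)$, $W\in G_q(X)$, take $A=e_1\wedge\cdots\wedge e_p$ on a principal basis of $V$ \wrt $W$, and use $\cos\Theta_{V,W}=\|P_WA\|/\|A\|$.

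For (i), the equivalence $\Theta_{V,W}=\frac\pi2\Leftrightarrow P_WA=0\Leftrightarrow V\pperp W$ is immediate from \Cref{pr:pperp}(iii); for the first equivalence I would use that $P_W$ is an orthogonal projection, so $\|P_WA\|=\|A\|$ iff $P_WA=A$, i.e.\ $A\in\bigwedge^pW$, which means $V\subset W$ (equivalently, by \Cref{pr:Theta prod cos}, all principal angles vanish, i.e.\ $\dim(V\cap W)=p$). For (iii), the hypothesis $U\perp(V+W)$ gives $U\perp V$, so $P_U$ kills each $e_i$ and $P_{W\oplus U}e_i=P_We_i$; since projection is multiplicative on blades, $P_{W\oplus U}A=P_WA$ and the angles coincide. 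For (ii), if $V\pperp W$ then $\dim P_W(V)<\dim V$ forces $V\pperp P_W(V)$ by \Cref{pr:pperp}(i), so both angles equal $\frac\pi2$; otherwise $P_W(V)=W_P$ and \Cref{pr:thetas WP Wperp} shows $V$ has the same principal angles \wrt $W$ and \wrt $P_W(V)$, so \Cref{pr:Theta prod cos} gives equality.

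For (iv), I would pass to a principal basis: the $\dim(V\cap W)=r$ vanishing angles correspond to principal vectors with $e_i=f_i$ spanning $V\cap W$, so $V'=\Span\{e_{r+1},\ldots,e_p\}$, $W'=\Span\{f_{r+1},\ldots,f_q\}$, and the restricted families are associated principal bases of $V',W'$ with principal angles $\theta_{r+1}\leq\cdots$, i.e.\ exactly the nonzero angles of $V,W$. Since the vanishing angles contribute factors $1$, \Cref{pr:Theta prod cos} yields $\cos\Theta_{V',W'}=\cos\Theta_{V,W}$ (both sides $0$ when $p>q$). For (v), a unitary $T$ carries a principal basis of $V,W$ to one of $T(V),T(W)$ with identical principal angles, so \Cref{pr:Theta prod cos} applies; alternatively $P_{T(W)}=TP_WT^{-1}$ and $TA$ represents $T(V)$, giving $\|P_{T(W)}(TA)\|/\|TA\|=\|P_WA\|/\|A\|$ directly.

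The substantive case is (vi). Comparing dimensions, $\Theta_{V^\perp,W^\perp}$ projects an $(n-p)$-space onto an $(n-q)$-space while $\Theta_{W,V}$ projects $q$ onto $p$, and $n-p\leq n-q\Leftrightarrow q\leq p$, so by the dimension clause of \Cref{pr:Theta prod cos} both angles are $\frac\pi2$ exactly when $q>p$. When $q\leq p$ I would invoke \Cref{pr:thetas perps}: the nonzero principal angles of $V^\perp,W^\perp$ coincide with those of $V,W$, so, as zero angles contribute trivial factors, $\cos\Theta_{V^\perp,W^\perp}=\prod\cos\theta_i^\perp=\prod\cos\theta_i=\cos\Theta_{W,V}$, each product running over all (equivalently all nonzero) principal angles. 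The main obstacle is the bookkeeping at the boundary: \Cref{pr:thetas perps} excludes $V,W\in\{\{0\},X\}$, so these degenerate cases must be verified directly against the conventions $\Theta_{\{0\},\cdot}=0$ and $\Theta_{\cdot,\{0\}}=\frac\pi2$, and I must confirm the counts of vanishing angles match so that the products over $n-p$ and over $q$ factors genuinely agree.
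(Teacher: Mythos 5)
Your proposal is correct and follows essentially the same route as the paper's proof: every item is reduced to \Cref{pr:Theta prod cos} together with \Cref{pr:pperp}, \Cref{pr:P ei}/\Cref{pr:thetas WP Wperp}, unitary invariance of principal angles (\Cref{pr:relative position}), and \Cref{pr:thetas perps} with the dimension flip for (vi). The only (harmless) variations are your direct blade computations $P_{W\oplus U}A=P_WA$ in (iii) and $P_{T(W)}=TP_WT^{-1}$ in (v), and your explicit attention to the degenerate cases $V,W\in\{\{0\},X\}$, which the paper leaves implicit.
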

\begin{proof}
	Follow from \Cref{pr:Theta prod cos} combined with:
	\emph{(\ref{it:Theta 0 pi2})} \Cref{pr:pperp}\ref{it:pperp q<p ou pi2};
	\emph{(\ref{it:Theta P(V)})} \ref{it:Theta 0 pi2} and \Cref{pr:P ei}\ref{it:PV};
	\emph{(\ref{it:transformation})} \Cref{pr:relative position};
	\emph{(\ref{it:Theta perp perp})} \Cref{pr:thetas perps} and $\dim V^\perp > \dim W^\perp \Leftrightarrow \dim W > \dim V$.
\end{proof}

These properties rely on the angle asymmetry (\eg take \ref{it:Theta P(V)} with perpendicular planes in $\R^3$),
and \ref{it:Theta 0 pi2} sheds light on it: $\Theta_{V,W}$ shows how far $V$ is from being contained in $W$, and is $\frac \pi 2$  if $V$ has a line orthogonal to $W$. 
A line $L$ can go from being contained to being orthogonal to a plane $W$, so $\Theta_{L,W}$ can have any value;  $W$ is never any closer to being contained in $L$, and always has a line orthogonal to $L$, so $\Theta_{W,L}$ is always $\frac{\pi}{2}$.

\begin{proposition}\label{pr:determinant projection}
$\cos^2\Theta_{V,W}=\det(\bar{\mathbf{P}}^T \mathbf{P})$, where $\mathbf{P}$ is a matrix for the orthogonal projection $V\rightarrow W$ in orthonormal bases of $V$ and $W$.
If $\dim V=\dim W$ then $\cos\Theta_{V,W}=|\det \mathbf{P}|$.
\end{proposition}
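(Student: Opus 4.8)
The plan is to compute $\cos\Theta_{V,W}$ directly from the definition using a unit blade built from an orthonormal basis, and to recognize the resulting Gram determinant as $\det(\bar{\mathbf{P}}^T\mathbf{P})$. Let $p=\dim V$, $q=\dim W$, and fix orthonormal bases $(e_1,\ldots,e_p)$ of $V$ and $(f_1,\ldots,f_q)$ of $W$, with $\mathbf{P}$ the $q\times p$ matrix whose entries are $\mathbf{P}_{ki}=\inner{f_k,e_i}$ (the matrix of the projection $V\to W$). Take $A=e_1\wedge\cdots\wedge e_p$, which has $\|A\|=1$, so that $\cos\Theta_{V,W}=\|P_W A\|$ and hence $\cos^2\Theta_{V,W}=\|P_W A\|^2$.

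First I would use the multiplicative property $P_W(A\wedge B)=P_W A\wedge P_W B$ to write $P_W A=(P_W e_1)\wedge\cdots\wedge(P_W e_p)$, and then apply the Gram-determinant formula for inner products of blades to get $\|P_W A\|^2=\det\big(\inner{P_W e_i,P_W e_j}\big)$. The key step is to identify this Gram matrix with $\bar{\mathbf{P}}^T\mathbf{P}$: since $P_W e_i=\sum_k \mathbf{P}_{ki}\,f_k$ and the $f_k$ are orthonormal, $\inner{P_W e_i,P_W e_j}=\sum_k \overline{\mathbf{P}_{ki}}\,\mathbf{P}_{kj}=(\bar{\mathbf{P}}^T\mathbf{P})_{ij}$, where the conjugation is forced by the inner product being conjugate-linear in the left entry. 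Taking determinants yields $\cos^2\Theta_{V,W}=\det(\bar{\mathbf{P}}^T\mathbf{P})$.

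Two cases deserve a remark. When $p>q$ the vectors $P_W e_1,\ldots,P_W e_p$ lie in the $q$-dimensional space $W$ and so are dependent; thus $P_W A=0$ and the Gram matrix is singular, giving $0$ on both sides, in agreement with \Cref{pr:Theta prod cos}. When $\dim V=\dim W$ the matrix $\mathbf{P}$ is square, so $\det(\bar{\mathbf{P}}^T\mathbf{P})=\overline{\det\mathbf{P}}\,\det\mathbf{P}=|\det\mathbf{P}|^2$; since $\cos\Theta_{V,W}\ge 0$, taking square roots gives $\cos\Theta_{V,W}=|\det\mathbf{P}|$.

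I expect no serious obstacle here; the only things to watch are the complex-conjugation bookkeeping in the Gram matrix (so that one gets $\bar{\mathbf{P}}^T$ rather than $\mathbf{P}^T$) and the degenerate case $p>q$ where $\mathbf{P}$ is not square. As a cross-check, the identity also follows from the singular-value description of $P$: the eigenvalues of $\bar{\mathbf{P}}^T\mathbf{P}=P^*P$ are $\cos^2\theta_1,\ldots,\cos^2\theta_{\min\{p,q\}}$, padded with zeros when $p>q$, whose product is $\prod_i\cos^2\theta_i$ if $p\le q$ and $0$ otherwise, which is exactly $\cos^2\Theta_{V,W}$ by \Cref{pr:Theta prod cos}.
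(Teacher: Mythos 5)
Your proof is correct, but it takes a genuinely different route from the paper's. The paper proves the identity by working in associated principal bases: there $\mathbf{P}$ is the $q\times p$ ``diagonal'' matrix of the $\cos\theta_i$'s, so $\det(\bar{\mathbf{P}}^T\mathbf{P})=\prod_i\cos^2\theta_i$ (or $0$ when $p>q$, since the diagonal of $\bar{\mathbf{P}}^T\mathbf{P}$ then contains zeros), and the claim is immediate from \Cref{pr:Theta prod cos} --- essentially your closing ``cross-check'' promoted to the whole argument. Your main argument instead computes $\|P_W A\|^2$ directly in an \emph{arbitrary} orthonormal basis, using $P_W(e_1\wedge\cdots\wedge e_p)=P_We_1\wedge\cdots\wedge P_We_p$ and the Gram-determinant formula $\inner{A,B}=\det\big(\inner{v_i,w_j}\big)$, then identifies the Gram matrix with $\bar{\mathbf{P}}^T\mathbf{P}$. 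This buys something the paper leaves implicit: since the statement is asserted for arbitrary orthonormal bases, the paper's one-line argument tacitly relies on the unitary invariance of $\det(\bar{\mathbf{P}}^T\mathbf{P})$ (it is the product of squared singular values) to pass from principal bases to general ones, whereas your computation needs no such invariance and no principal angles at all, at the cost of invoking two facts from the preliminaries (multiplicativity of $P_W$ on wedge products and the inner-product formula for blades). Your treatment of the degenerate case $p>q$ and of the conjugation bookkeeping that produces $\bar{\mathbf{P}}^T$ rather than $\mathbf{P}^T$ is also correct.
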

\begin{proof}
	Follows from \eqref{eq:Theta prod cos}, as in associated principal bases $\mathbf{P}$ is a $q\times p$ diagonal matrix with the $\cos\theta_i$'s.
		\OMIT{If $p>q$ the diagonal of $\bar{\mathbf{P}}^T \mathbf{P}$ has $0$'s}
\end{proof}

\begin{figure}
	\centering
	\begin{subfigure}[b]{0.5\textwidth}
		\includegraphics[width=\textwidth]{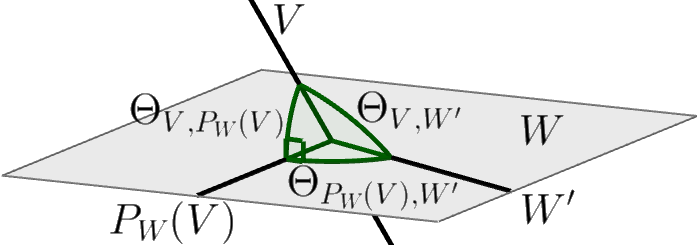}
		\caption{$\cos\Theta_{V,W'} = \cos\Theta_{V,P_W(V)}  \cos\Theta_{P_W(V),W'}$}
		\label{fig:spherical Pythagorean}
	\end{subfigure}
	\qquad
	\begin{subfigure}[b]{0.43\textwidth}
		\includegraphics[width=\textwidth]{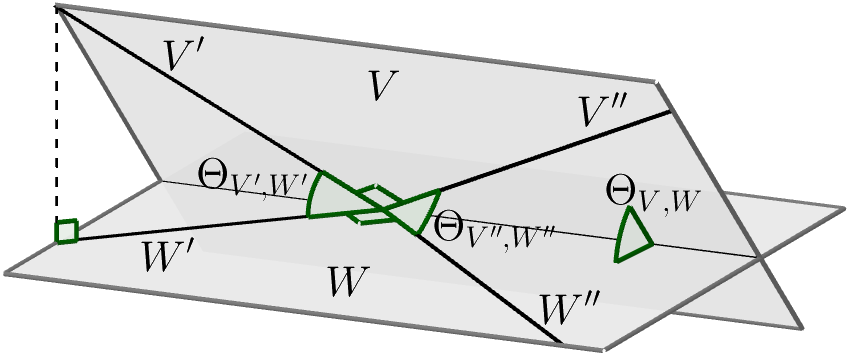}
		\caption{$\cos \Theta_{V,W} = \cos \Theta_{V',W'} \cos \Theta_{V'',W''}$}
		\label{fig:partition}
	\end{subfigure}
	\caption{Generalized spherical Pythagorean theorem, and angles for orthogonal partitions (the figures use lines and planes, but the formulas hold for any dimensions)}
	\label{fig:3 cos}
\end{figure}

\begin{proposition}\label{pr:spherical Pythagorean theorem}
$\cos\Theta_{V,W'} = \cos\Theta_{V,P_W(V)} \cos\Theta_{P_W(V),W'}$ for any $V,W,W'\in G(X)$ with $W'\subset W$.
\end{proposition}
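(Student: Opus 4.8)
The plan is to push everything through a single blade $A$ representing $V$, together with the projection identity $P_{W'}=P_{W'}\circ P_W$ that holds precisely because $W'\subset W$. The three cosines then telescope.

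First I would dispose of the degenerate case $V\pperp W$. Here $\Theta_{V,W}=\frac{\pi}{2}$ by \Cref{pr:Theta}\ref{it:Theta 0 pi2}, and since $W'\subset W$ gives $W^\perp\subset W'^\perp$, we get $W'^\perp\cap V\supseteq W^\perp\cap V\neq\{0\}$, so $V\pperp W'$ and $\Theta_{V,W'}=\frac{\pi}{2}$ as well. As \Cref{pr:Theta}\ref{it:Theta P(V)} yields $\Theta_{V,P_W(V)}=\Theta_{V,W}=\frac{\pi}{2}$, both sides of the asserted identity vanish. The trivial cases $V=\{0\}$ or $W'=\{0\}$ are checked directly from the conventions $\Theta_{\{0\},\cdot}=0$ and $\Theta_{\cdot,\{0\}}=\frac{\pi}{2}$.

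Now assume $V\not\pperp W$. Let $A$ be a blade representing $V$ and set $B=P_W A$. By \Cref{pr:P ei}\ref{it:[P_W A] = P_W[A]} we have $B\neq 0$ and $[B]=P_W(V)$, so $B$ is a blade representing $P_W(V)$. The key structural fact is that, since $W'\subset W$, the orthogonal projections on $X$ satisfy $P_{W'}v=P_{W'}P_W v$ for every $v\in X$; because the extension of each projection to $\bigwedge X$ is the algebra homomorphism determined by $P_V 1=1$ and $P_V(A\wedge B)=P_V A\wedge P_V B$, and two such homomorphisms that agree on $\bigwedge^0 X\oplus\bigwedge^1 X$ agree everywhere, we conclude $P_{W'}=P_{W'}P_W$ on all of $\bigwedge X$. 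In particular $P_{W'}A=P_{W'}B$. With these ingredients, using \Cref{pr:Theta}\ref{it:Theta P(V)} for the first factor and the definition of $\Theta$ for the second,
\[
\cos\Theta_{V,P_W(V)}\,\cos\Theta_{P_W(V),W'} = \frac{\|P_W A\|}{\|A\|}\cdot\frac{\|P_{W'}B\|}{\|B\|} = \frac{\|B\|}{\|A\|}\cdot\frac{\|P_{W'}B\|}{\|B\|} = \frac{\|P_{W'}A\|}{\|A\|} = \cos\Theta_{V,W'},
\]
where the third equality cancels $\|B\|=\|P_W A\|$ and uses $P_{W'}B=P_{W'}A$.

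I expect the only genuinely delicate point to be the justification that $P_{W'}=P_{W'}P_W$ persists from $X$ to the whole exterior algebra (rather than being an accident on a particular blade), together with the bookkeeping of the degenerate case $V\pperp W$ and the verification that $B$ is a bona fide blade for $P_W(V)$. Once these are secured, the multiplicative/telescoping structure of the three cosines carries out the rest of the argument with no further computation.
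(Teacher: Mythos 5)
Your proof is correct, but it follows a genuinely different route from the paper's. Both arguments share the same skeleton --- dispose of the case $V\pperp W$, then exploit that the projection onto $W'$ factors through $P_W(V)$ precisely because $W'\subset W$ --- but the paper realizes this factorization with matrices: it writes the projections $V\to W'$, $V\to P_W(V)$, $P_W(V)\to W'$ as $\mathbf{P}_1=\mathbf{P}_3\mathbf{P}_2$ in orthonormal bases, uses that $\mathbf{P}_2$ is square to get $\det(\bar{\mathbf{P}}_1^T\mathbf{P}_1)=|\det\mathbf{P}_2|^2\,\det(\bar{\mathbf{P}}_3^T\mathbf{P}_3)$, and concludes by \Cref{pr:determinant projection}. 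You instead stay inside the exterior algebra: you lift $P_{W'}=P_{W'}\circ P_W$ from $X$ to all of $\bigwedge X$ (your homomorphism argument is sound, since both maps are unital wedge-homomorphisms agreeing on $\bigwedge^0 X\oplus\bigwedge^1 X$, which generates $\bigwedge X$; alternatively one can check the identity factor by factor on a blade $v_1\wedge\cdots\wedge v_p$), take $B=P_W A$ as a blade representing $P_W(V)$ --- here the nonvanishing $P_W A\neq 0$ really comes from \Cref{pr:pperp}, with \Cref{pr:P ei} then giving $[B]=P_W(V)$ --- and telescope norms, with \Cref{pr:Theta}\ref{it:Theta P(V)} supplying $\cos\Theta_{V,P_W(V)}=\|P_W A\|/\|A\|$. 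What your version buys: it is coordinate-free, needs neither \Cref{pr:determinant projection} nor any determinant identity nor principal angles, and it isolates exactly where the hypotheses $W'\subset W$ and $V\not\pperp W$ enter. What the paper's version buys: with \Cref{pr:determinant projection} already in hand it is a two-line computation, and the matrix identity it rests on is reusable elsewhere. A minor remark: your separate treatment of $W'=\{0\}$ is unnecessary, since the main computation already covers it ($P_{\{0\}}$ annihilates any blade of positive grade); only $V=\{0\}$ and $V\pperp W$ genuinely require side arguments.
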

\begin{proof}
\Cref{pr:Theta}\ref{it:Theta 0 pi2} lets us assume $V\not\pperp W$, so $\dim P_W(V)=\dim V$. 
	\OMIT{\ref{pr:pperp}}
Let $\mathbf{P}_1$, $\mathbf{P}_2$ and $\mathbf{P}_3$ be matrices for the orthogonal projections $V \rightarrow W'$, $V \rightarrow P_W(V)$ and $P_W(V) \rightarrow W'$, respectively, in orthonormal bases.
Since $\mathbf{P}_1 = \mathbf{P}_3 \mathbf{P}_2$
	\OMIT{$v=P_Wv+b$, $b\perp W$, so $P_{W'}^V v = P_{W'} v = P_{W'} P_W v+0 = P_{W'}^{PV} P_{PV}^V v$}
and $\mathbf{P}_2$ is square, 
$\det(\bar{\mathbf{P}}_1^T \mathbf{P}_1) = |\det \mathbf{P}_2|^2 \cdot \det(\bar{\mathbf{P}}_3^T \mathbf{P}_3)$.
	\OMIT{$= \det(\bar{P}_2^T\bar{P}_3^TP_3P_2) = \det \bar{P}_2^T \det(\bar{P}_3^TP_3)\det P_2$}
The result follows from \Cref{pr:determinant projection}.
\end{proof}

This formula generalizes the spherical Pythagorean theorem.
It relies on the angle asymmetry (\eg let $U=V\cap W$ for planes $V,W\subset \R^3$), as does the next result (\eg partition $V=\R^3$ into a line $V'$ and a plane $V''$, and let $W$ be another plane).
Fig.\,\ref{fig:3 cos} illustrates them.

\begin{proposition}\label{pr:Theta orthog partition}
	$\cos \Theta_{V,W} = \cos \Theta_{V',W'} \cos \Theta_{V'',W''}$ 
	for orthogonal partitions $V = V'\oplus V''$ and $W=W'\oplus W''$ with $W' = P_W(V')$.
\end{proposition}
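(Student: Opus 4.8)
The plan is to pass to the \Plucker\ picture and exploit that $P_W$ is a wedge homomorphism. I would choose blades $A'$ and $A''$ representing $V'$ and $V''$; since $V'\perp V''$ and $V=V'\oplus V''$, the product $A=A'\wedge A''$ is a blade representing $V$ with $\|A\|=\|A'\|\,\|A''\|$ (by the norm multiplicativity for orthogonal blades recalled in \Cref{sc:preliminaries}). Using $P_W(B\wedge C)=P_WB\wedge P_WC$, the quantity $\cos\Theta_{V,W}=\|P_WA\|/\|A\|$ then hinges on computing $\|P_WA'\wedge P_WA''\|$.

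The crux is that the hypothesis $W'=P_W(V')$ pins down $P_WA'$. By \Cref{pr:P ei}\ref{it:[P_W A] = P_W[A]}, when $P_WA'\neq0$ we have $[P_WA']=P_W(V')=W'$, so $P_WA'$ is a top-degree blade of $\bigwedge W'$ and $\dim W'=\dim V'$. Next I would expand $P_WA''$ by applying $P_W=P_{W'}+P_{W''}$ to each vector factor of $A''$. Every term that keeps a factor lying in $W'$ is annihilated upon wedging with the top-degree element $P_WA'\in\bigwedge W'$, since $u\wedge P_WA'=0$ for all $u\in W'=[P_WA']$; hence only the pure-$W''$ term survives and $P_WA'\wedge P_WA''=P_WA'\wedge P_{W''}A''$. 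Because $[P_WA']\subset W'\perp W''\supset[P_{W''}A'']$, the norm factorizes as $\|P_WA'\|\,\|P_{W''}A''\|$, so dividing by $\|A'\|\,\|A''\|$ yields $\cos\Theta_{V,W}=\big(\|P_WA'\|/\|A'\|\big)\big(\|P_{W''}A''\|/\|A''\|\big)$. The first factor is $\cos\Theta_{V',W}$, which equals $\cos\Theta_{V',W'}$ by \Cref{pr:Theta}\ref{it:Theta P(V)} (as $W'=P_W(V')$), and the second is $\cos\Theta_{V'',W''}$ by definition, giving the claim.

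The main obstacle is the selection step together with the degenerate cases, which must be cleared first so that $P_WA'$ really is a top-degree blade of $\bigwedge W'$ (the step needs $\bigwedge^{\dim W'+1}W'=0$). When $P_WA'=0$, i.e.\ $V'\pperp W$ (\Cref{pr:pperp}), both $\cos\Theta_{V,W}$ and the factor $\cos\Theta_{V',W'}$ vanish and the identity reduces to $0=0$; the trivial summands $V'=\{0\}$ or $V''=\{0\}$ are handled by the blade convention $A'=1$ or $A''=1$ and collapse to \Cref{pr:Theta}\ref{it:Theta P(V)}. One could instead argue via \Cref{pr:determinant projection}, choosing orthonormal bases adapted to both partitions so that the matrix of $P_W\colon V\to W$ is block upper-triangular; but the off-diagonal block $P_{W'}|_{V''}$ obstructs a direct factorization of $\det(\bar{\mathbf{P}}^T\mathbf{P})$, whereas the exterior-algebra computation makes the splitting transparent.
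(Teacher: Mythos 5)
Your proof is correct and takes essentially the same route as the paper's: both represent $V$ by the wedge of blades for $V'$ and $V''$, dispose of the case $V'\pperp W$ separately, use that $P_W A'$ is a top-degree blade of $\bigwedge W'$ to annihilate the $W'$-components and get $P_W A'\wedge P_W A'' = P_W A'\wedge P_{W''}A''$, factor the norm by the orthogonality $W'\perp W''$, and conclude via $\Theta_{V',W}=\Theta_{V',W'}$ from \Cref{pr:Theta}\ref{it:Theta P(V)}. The only cosmetic difference is that the paper takes the blades to be unit blades, so no final division is needed.
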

\begin{proof}
	Given unit blades $A$ and $B$ with $[A] = V'$ and $[B] = V''$, $A \wedge B$ is a unit blade with $[A\wedge B] = V$.
	If $V' \pperp W$ then $\Theta_{V,W} = \Theta_{V',W'} = \frac\pi2$.
		\OMIT{\ref{pr:Theta}\ref{it:Theta 0 pi2}}
	If $V' \not\pperp W$ then $W'' = [P_W A]^\perp \cap W$, 
		\OMIT{\ref{pr:P ei}\ref{it:[P_W A] = P_W[A]}, \ref{pr:pperp}}
	so $P_W A \wedge P_W B = P_W A \wedge P_{W''} B$
		\OMIT{but $P_W B \neq P_U B$}
	and $\cos\Theta_{V,W} = \|P_W(A\wedge B)\| = \|P_W A \wedge P_W B\| = \|P_W A \wedge P_{W''} B\| = \|P_W A\| \|P_{W''} B\| = \cos\Theta_{V',W'} \cos\Theta_{V'',W''}$.
		\OMIT{then use \ref{pr:Theta}\ref{it:Theta P(V)}}
\end{proof}

\begin{proposition}\label{pr:Theta min max Theta subspaces}
	Let $V,V',W,W' \in G(X)$ with $V'\subset V$ and $W'\subset W$.
	\begin{enumerate}[i)]
		\item $\Theta_{V,W'} \geq \Theta_{V,W}$, with equality if and only if $V\pperp W$ or $P_W(V)\subset W'$.\label{it:W' sub W}
		\item $\Theta_{V',W} \leq \Theta_{V,W}$, with equality if and only if $V'\pperp W$ or $V'^\perp \cap V \subset W$.\label{it:V' sub V}		
	\end{enumerate}
\end{proposition}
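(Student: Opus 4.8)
The plan is to derive each inequality from one of the multiplicative formulas already at hand, and then read off the equality case from the single factor that gets suppressed.

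For part \ref{it:W' sub W} I would invoke the generalized spherical Pythagorean theorem (\Cref{pr:spherical Pythagorean theorem}): since $W'\subset W$, it gives $\cos\Theta_{V,W'} = \cos\Theta_{V,P_W(V)}\,\cos\Theta_{P_W(V),W'}$, and \Cref{pr:Theta}\ref{it:Theta P(V)} turns the first factor into $\cos\Theta_{V,W}$, so $\cos\Theta_{V,W'} = \cos\Theta_{V,W}\,\cos\Theta_{P_W(V),W'}$. As the last cosine lies in $[0,1]$, this yields $\cos\Theta_{V,W'}\leq\cos\Theta_{V,W}$, hence $\Theta_{V,W'}\geq\Theta_{V,W}$. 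For equality I split into two cases: if $\cos\Theta_{V,W}=0$, i.e.\ $V\pperp W$ by \Cref{pr:Theta}\ref{it:Theta 0 pi2}, both sides vanish (consistently, $W'\subset W$ forces $V\pperp W'$); otherwise one cancels $\cos\Theta_{V,W}$, so equality holds exactly when $\cos\Theta_{P_W(V),W'}=1$, i.e.\ $\Theta_{P_W(V),W'}=0$, i.e.\ $P_W(V)\subset W'$ (again \Cref{pr:Theta}\ref{it:Theta 0 pi2}). This gives the stated alternative $V\pperp W$ or $P_W(V)\subset W'$.

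For part \ref{it:V' sub V} I would apply the orthogonal-partition formula (\Cref{pr:Theta orthog partition}) to $V=V'\oplus V''$ with $V''=V'^\perp\cap V$. Setting $W''=P_W(V')^\perp\cap W$, it gives $\cos\Theta_{V,W}=\cos\Theta_{V',P_W(V')}\,\cos\Theta_{V'',W''}$, and \Cref{pr:Theta}\ref{it:Theta P(V)} rewrites the first factor as $\cos\Theta_{V',W}$. Thus $\cos\Theta_{V,W}=\cos\Theta_{V',W}\,\cos\Theta_{V'',W''}\leq\cos\Theta_{V',W}$, i.e.\ $\Theta_{V',W}\leq\Theta_{V,W}$. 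For equality: if $\cos\Theta_{V',W}=0$, i.e.\ $V'\pperp W$, then $V'\subset V$ forces $V\pperp W$ and both sides equal $\frac\pi2$; otherwise equality means $\cos\Theta_{V'',W''}=1$, i.e.\ $V''\subset W''$.

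The one step requiring a genuine argument is translating $V''\subset W''$ into the clean condition $V'^\perp\cap V\subset W$, i.e.\ $V''\subset W$; this is the main obstacle, everything else being cancellation in the product formulas. Since $W''\subset W$, one implication is immediate. Conversely, assuming $V''\subset W$ I must verify $V''\perp P_W(V')$, so that $V''\subset P_W(V')^\perp\cap W=W''$. This follows from self-adjointness of $P_W$: for $v'\in V'$ and $u\in V''\subset W$ one has $\inner{P_W v',u}=\inner{v',P_W u}=\inner{v',u}=0$, because $P_W u=u$ and $V'\perp V''$. Hence $V''\subset W''\Leftrightarrow V''\subset W$, and the equality condition reads $V'\pperp W$ or $V'^\perp\cap V\subset W$, as claimed. (Alternatively, part \ref{it:V' sub V} can be deduced from part \ref{it:W' sub W} through the complement identity $\Theta_{V,W}=\Theta_{W^\perp,V^\perp}$ of \Cref{pr:Theta}\ref{it:Theta perp perp}, since $V'\subset V$ gives $V^\perp\subset V'^\perp$; but I expect the direct route above to extract the equality case more transparently.)
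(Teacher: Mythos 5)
Your proof is correct and is essentially the paper's own argument: part \emph{i)} comes from \Cref{pr:spherical Pythagorean theorem} combined with \Cref{pr:Theta}\ref{it:Theta P(V)}, and part \emph{ii)} from \Cref{pr:Theta orthog partition} applied to $V = V'\oplus(V'^\perp\cap V)$, with the equality cases read off from the suppressed factor exactly as the paper does. The only difference is that you spell out the equivalence $V'^\perp\cap V\subset (P_W(V'))^\perp\cap W \Leftrightarrow V'^\perp\cap V\subset W$ via self-adjointness of $P_W$, a step the paper asserts in parentheses without proof.
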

\begin{proof}
	\emph{(\ref{it:W' sub W})} By Propositions \ref{pr:Theta}\ref{it:Theta P(V)} and \ref{pr:spherical Pythagorean theorem},
	$\cos\Theta_{V,W'} \leq \cos\Theta_{V,W}$, with equality if, and only if, $\Theta_{V,W} = \frac\pi2$ or $\Theta_{P_W(V),W'} = 0$.
		\OMIT{\ref{pr:Theta}\ref{it:Theta 0 pi2} leva ao enunciado}
	\emph{(\ref{it:V' sub V})} By \Cref{pr:Theta orthog partition},
	$\cos\Theta_{V,W} \leq \cos\Theta_{V',P_W(V')}$, 
		\OMIT{then use \ref{pr:Theta}\ref{it:Theta P(V)}}
	with equality if, and only if, $V'\pperp W$ 
		\OMIT{\ref{pr:Theta}\ref{it:Theta 0 pi2}}
	or $V'^\perp \cap V \subset (P_W (V'))^\perp \cap W$ ($\Leftrightarrow V'^\perp \cap V \subset W$).
\end{proof}

This implies the minimum angle in certain sets of subspaces is $\Theta_{V,W}$.
The decomposition in \Cref{df:PO} leads to special subspaces attaining this minimum (Fig.\,\ref{fig:angle VWperp}):
if $p=\dim V \leq \dim W = q$ then $\dim W_P = p$ and $\dim V \oplus W_\perp = q$, so the minima are attained in $G_p(W)$ and $G_q(X)$.

\begin{corollary}\label{pr:extrema}
	For $V,W \in G(X)$, 
	$\min\{\Theta_{V,U}:U \in G(W)\} = \Theta_{V,W} = \Theta_{V,W_P}$ and $\min\{\Theta_{Y,W}:Y \in G(X), Y\supset V\} = \Theta_{V,W} = \Theta_{V\oplus W_\perp,W}$.
\end{corollary}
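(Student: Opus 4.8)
The plan is to derive both equalities directly from \Cref{pr:Theta min max Theta subspaces}, which already controls how $\Theta$ varies when one enlarges $W$ or enlarges $V$; the decomposition $W = W_P \oplus W_\perp$ of \Cref{df:PO} will then supply the explicit subspaces at which the two minima are attained.

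For the first identity, I would first note that every $U \in G(W)$ satisfies $U \subset W$, so \Cref{pr:Theta min max Theta subspaces}\ref{it:W' sub W} (with $W' = U$) gives $\Theta_{V,U} \geq \Theta_{V,W}$; hence $\Theta_{V,W}$ is a lower bound for the set. To see it is attained, I take $U = W_P$: since $W_P \subset W$, the same part of the proposition applies, and I check its equality condition. If $V \pperp W$ the condition $V \pperp W$ holds outright; otherwise $W_P = P_W(V)$ by the remark following \Cref{df:PO}, so $P_W(V) \subset W_P$, again meeting the condition. Either way $\Theta_{V,W_P} = \Theta_{V,W}$, which shows the minimum equals $\Theta_{V,W}$ and is realized by $W_P \in G(W)$.

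For the second identity I would argue dually. Any $Y \in G(X)$ with $Y \supset V$ contains $V$, so \Cref{pr:Theta min max Theta subspaces}\ref{it:V' sub V} (with the roles of the proposition's $V,V'$ played by $Y,V$) gives $\Theta_{V,W} \leq \Theta_{Y,W}$, making $\Theta_{V,W}$ a lower bound. Attainment is witnessed by $Y = V \oplus W_\perp \supset V$: the key computation is that $W_\perp \perp V$ (the vectors $f_{m+1},\ldots,f_q$ spanning $W_\perp$ are orthogonal to every principal vector of $V$), so the sum is orthogonal and $V^\perp \cap (V \oplus W_\perp) = W_\perp \subset W$; this is exactly the equality condition of \ref{it:V' sub V}, yielding $\Theta_{V\oplus W_\perp,W} = \Theta_{V,W}$.

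The inequalities themselves are immediate from \Cref{pr:Theta min max Theta subspaces}; the only real content is verifying the two equality conditions, i.e. that $P_W(V) \subset W_P$ and $V^\perp \cap (V\oplus W_\perp) = W_\perp \subset W$, both of which follow from the structure of the principal decomposition in \Cref{df:PO}. I also need to confirm that the degenerate cases ($V = \{0\}$, $W = \{0\}$, or $V \pperp W$) are covered by the stated conventions, where $\Theta$ takes the boundary values $0$ or $\frac\pi2$ and the claimed minimizers still work. The main obstacle is keeping straight which direction of monotonicity applies and handling the $V \pperp W$ branch separately, since there $W_P$ need not equal $P_W(V)$ and the equality must instead be read off from $\Theta_{V,W} = \frac\pi2$.
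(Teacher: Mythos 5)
Your proof is correct, and its skeleton matches the paper's: both halves obtain the lower bound from \Cref{pr:Theta min max Theta subspaces} and attain it at the subspaces $W_P$ and $V\oplus W_\perp$ of \Cref{df:PO}. Where you genuinely differ is the attainment step. The paper never invokes the equality clauses of \Cref{pr:Theta min max Theta subspaces}: for $0<\dim V\leq \dim W$ it cites \Cref{pr:thetas WP Wperp} (the pair $(V,W_P)$ has the same principal angles as $(V,W)$, and $(V\oplus W_\perp,W)$ the same nonzero ones) and then simply recomputes both angles with the product formula of \Cref{pr:Theta prod cos}, dispatching $V=\{0\}$ and $\dim V>\dim W$ separately by the conventions. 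You instead verify the ``if and only if'' conditions of \Cref{pr:Theta min max Theta subspaces} directly: $P_W(V)\subset W_P$ for the first identity (splitting on whether $V$ is partially orthogonal to $W$ and using the remark after \Cref{df:PO}), and $V^\perp\cap(V\oplus W_\perp)=W_\perp\subset W$ for the second, which rests on the correct observation that $W_\perp\perp V$ (for $p\leq q$ and $j>m$ one has $\inner{e_i,f_j}=0$ for all $i\leq p$; for $p>q$ one has $W_\perp=\{0\}$, so your witness degenerates to $Y=V$). Your route buys economy: neither \Cref{pr:thetas WP Wperp} nor \Cref{pr:Theta prod cos} is needed, and the whole argument lives inside the monotonicity proposition plus the structure of the decomposition. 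The paper's route buys uniformity: once the principal angles are matched, no case split on partial orthogonality is required within $0<\dim V\leq\dim W$, and the degenerate cases are isolated in a single closing sentence (for $V=\{0\}$ all values are $0$; for $\dim V>\dim W$ all are $\tfrac{\pi}{2}$), exactly the conventions you flag at the end of your argument.
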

\begin{proof}
	If  $0<\dim V\leq \dim W$, it follows from Propositions \ref{pr:thetas WP Wperp}, \ref{pr:Theta prod cos} and \ref{pr:Theta min max Theta subspaces}. 
		\OMIT{\ref{it:V' sub V},\ref{it:W' sub W}}
	If $V=\{0\}$ all values are $0$, and if $\dim V>\dim W$ all are $\frac\pi2$.
\end{proof}

\begin{figure}
	\centering
	\includegraphics[width=0.55\linewidth]{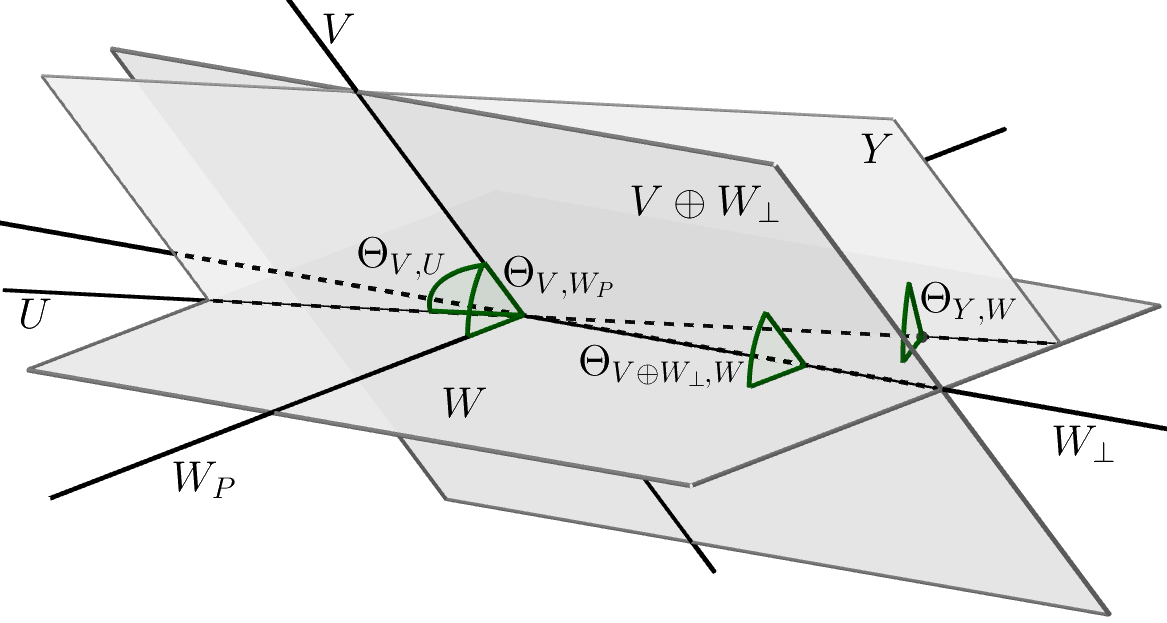}
	\caption{$\Theta_{V,W} = \Theta_{V,W_P} \leq \Theta_{V,U}$ if $U\subset W$,  $\Theta_{V,W} = \Theta_{V\oplus W_\perp,W} \leq \Theta_{Y,W}$ if $Y\supset V$}
	\label{fig:angle VWperp}
\end{figure}

\subsection{Metric properties}\label{sc:metric}

As seen, $\Theta_{V,W}$ gives the Fubini-Study metric on each $G_p(X)$.
We now prove that  on $G(X)$ it is an asymmetric metric, 
discuss its topologies, and obtain equality conditions for the oriented triangle inequality.

\begin{theorem}\label{pr:full Grassmannian}
	$G(X)$ is an asymmetric metric space, with distances given by $d(V,W) = \Theta_{V,W}$ for $V,W\in G(X)$.
\end{theorem}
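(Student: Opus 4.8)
The plan is to check directly the two defining properties of an asymmetric metric in \Cref{df:asymmetric metric}, reusing the structural results already proved for $\Theta$. The separation condition is essentially free: by \Cref{pr:Theta}\ref{it:Theta 0 pi2} we have $\Theta_{V,W}=0 \Leftrightarrow V\subset W$, so $\Theta_{V,W}=\Theta_{W,V}=0$ holds precisely when $V\subset W$ and $W\subset V$, i.e.\ $V=W$. Since $\Theta_{V,W}\in[0,\frac\pi2]$ by construction, $d$ indeed takes values in $[0,\infty]$.

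The substance of the proof is the oriented triangle inequality $\Theta_{V,W}\leq\Theta_{V,U}+\Theta_{U,W}$ for arbitrary $V,U,W\in G(X)$. First I would clear away the degenerate cases. If $V=\{0\}$ then $\Theta_{V,W}=0$ and there is nothing to prove; and since $\Theta_{V,W}\leq\frac\pi2$ always, the inequality is immediate whenever $\Theta_{V,U}=\frac\pi2$ or $\Theta_{U,W}=\frac\pi2$. So I may assume $V\neq\{0\}$, $V\not\pperp U$ and $U\not\pperp W$. By \Cref{pr:pperp}\ref{it:pperp q<p ou pi2} this forces $p:=\dim V\leq\dim U\leq\dim W$, so that, with $A$ a blade representing $V$, the projection $P_U A$ is nonzero and represents the $p$-subspace $P_U(V)\subset U$, and $\bigwedge^p W$ is a genuine line in $\PR(\bigwedge^p X)$.

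The key idea is to push all three angles into the single projective space $\PR(\bigwedge^p X)$ and apply there the spherical triangle inequality, using the projected blade $P_U A$ as the middle vertex. Applying \Cref{pr:spherical triangle inequality lines} (in its $\bigwedge X$ form) to the lines $\bigwedge^p V=\Span\{A\}$, $\Span\{P_U A\}=\bigwedge^p P_U(V)$ and $\bigwedge^p W$ gives $\theta_{\Span\{A\},\bigwedge^p W}\leq\theta_{\Span\{A\},\Span\{P_U A\}}+\theta_{\Span\{P_U A\},\bigwedge^p W}$. By \Cref{pr:angle external powers} the left-hand side is $\Theta_{V,W}$ and the last term is $\Theta_{P_U(V),W}$; the middle term is the Hermitian angle $\gamma_{A,P_U A}=\cos^{-1}\frac{\|P_U A\|}{\|A\|}=\Theta_{V,U}$, using $\inner{A,P_U A}=\|P_U A\|^2\geq 0$. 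Thus $\Theta_{V,W}\leq\Theta_{V,U}+\Theta_{P_U(V),W}$.

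To finish, since $P_U(V)\subset U$, monotonicity in the source subspace (\Cref{pr:Theta min max Theta subspaces}\ref{it:V' sub V}) gives $\Theta_{P_U(V),W}\leq\Theta_{U,W}$, yielding $\Theta_{V,W}\leq\Theta_{V,U}+\Theta_{U,W}$. I expect the crux to be exactly this reduction: the three quantities $\Theta_{V,U}$, $\Theta_{U,W}$, $\Theta_{V,W}$ naturally live in exterior powers of different grades, and the device that rescues the argument is to replace the intermediate subspace $U$ by its shadow $P_U(V)$, of the same grade $p$ as $V$, so that all three objects become honest lines in $\PR(\bigwedge^p X)$ where \Cref{pr:spherical triangle inequality lines} applies; the monotonicity lemma then absorbs the passage from $P_U(V)$ back to $U$. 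This is also where the asymmetry of $\Theta$ is indispensable, since it makes the degenerate cases collapse to $\frac\pi2$ and guarantees the grades align as $p\leq\dim U\leq\dim W$.
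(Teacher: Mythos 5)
Your overall strategy is the paper's: handle separation via \Cref{pr:Theta}\ref{it:Theta 0 pi2}, reduce to the case $\Theta_{V,U},\Theta_{U,W}\neq\frac\pi2$, descend to $\PR(\bigwedge^p X)$ with the projected blade $P_U A$ as middle vertex, and finish with the monotonicity \Cref{pr:Theta min max Theta subspaces}\ref{it:V' sub V}. But the central step misapplies the key lemma: \Cref{pr:spherical triangle inequality lines} is stated, and proved, for three \emph{lines}, while you feed it $\bigwedge^p W$, which is a line only when $\dim W=p$. In general $\dim\bigwedge^p W=\binom{\dim W}{p}>1$, and $\dim W>p$ is exactly the asymmetric situation this theorem is about. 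So the inequality you rely on, $\theta_{\Span\{A\},\bigwedge^p W}\leq\theta_{\Span\{A\},\Span\{P_U A\}}+\theta_{\Span\{P_U A\},\bigwedge^p W}$, is a line--line--\emph{subspace} triangle inequality that the cited proposition does not cover. It is a true statement, but proving it requires precisely the step your write-up omits, so as written the argument has a gap.

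The patch is the paper's inequality \eqref{eq:triangle1}. Since $P_U(V)\subset U$, \Cref{pr:Theta min max Theta subspaces}\ref{it:V' sub V} gives $\Theta_{P_U(V),W}\leq\Theta_{U,W}<\frac\pi2$, hence $C:=P_W P_U A\neq 0$ and $[C]=P_W(P_U(V))$ is a $p$-subspace of $W$ (\Cref{pr:P ei}). By \Cref{pr:Theta}\ref{it:Theta P(V)} and \Cref{pr:angle external powers}, $\theta_{\Span\{P_U A\},\bigwedge^p W}=\Theta_{P_U(V),W}=\Theta_{P_U(V),[C]}=\theta_{\Span\{P_U A\},\Span\{C\}}$, and now \Cref{pr:spherical triangle inequality lines} applies legitimately to the three lines $\Span\{A\}$, $\Span\{P_U A\}$, $\Span\{C\}$, yielding $\Theta_{V,[C]}\leq\Theta_{V,U}+\Theta_{P_U(V),W}$. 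This bounds $\Theta_{V,[C]}$, not $\Theta_{V,W}$; to return to $W$ you must add monotonicity in the target subspace, \Cref{pr:Theta min max Theta subspaces}\ref{it:W' sub W}: since $[C]\subset W$, $\Theta_{V,W}\leq\Theta_{V,[C]}$. Chaining these three inequalities gives $\Theta_{V,W}\leq\Theta_{V,U}+\Theta_{U,W}$. The double projection $P_W P_U(V)$ together with the step $\Theta_{V,W}\leq\Theta_{V,P_W P_U(V)}$ is exactly the paper's chain \eqref{eq:triangle1}--\eqref{eq:triangle3}; your version silently folds \eqref{eq:triangle1} into a citation that does not apply to subspaces.
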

\begin{proof}
	The first condition in \Cref{df:asymmetric metric} follows from \Cref{pr:Theta}\ref{it:Theta 0 pi2},
	and we must prove $\Theta_{U,W} \leq \Theta_{U,V} + \Theta_{V,W}$ for any $U,V,W \in G(X)$.
	We can assume $\Theta_{U,V},\Theta_{V,W}\neq\frac\pi 2$, and so $\Theta_{P_V(U),W}\neq \frac{\pi}{2}$ as well.
	Therefore $U$, $P_V(U)$ and $P_W P_V(U)$ have  the same dimension $p$, and
	\begin{align}
		\Theta_{U,W} &\leq \Theta_{U,P_W P_V(U)} \label{eq:triangle1}\\
		&\leq \Theta_{U,P_V(U)} + \Theta_{P_V(U),P_W P_V(U)} 
		= \Theta_{U,V} + \Theta_{P_V(U),W} \label{eq:triangle2}\\
		&\leq \Theta_{U,V} + \Theta_{V,W}, \label{eq:triangle3}
	\end{align}
	using Propositions \ref{pr:Theta min max Theta subspaces}\ref{it:W' sub W} in \eqref{eq:triangle1},  
	\ref{pr:spherical triangle inequality lines}, \ref{pr:angle external powers}, \ref{pr:Theta}\ref{it:Theta P(V)} in \eqref{eq:triangle2}, 
	and \ref{pr:Theta min max Theta subspaces}\ref{it:V' sub V} in \eqref{eq:triangle3}.
\end{proof}

\begin{figure}
	\centering
	\begin{subfigure}[b]{0.35\textwidth}
		\includegraphics[width=\textwidth]{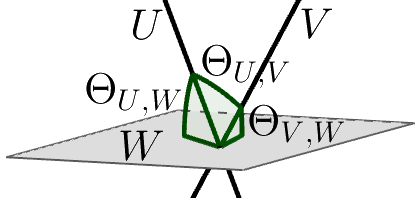}
		\caption{$\Theta_{U,W} \leq \Theta_{U,V} + \Theta_{V,W}$}
		\label{fig:oriented triangle inequality_1}
	\end{subfigure}
	\qquad
	\begin{subfigure}[b]{0.47\textwidth}
		\includegraphics[width=\textwidth]{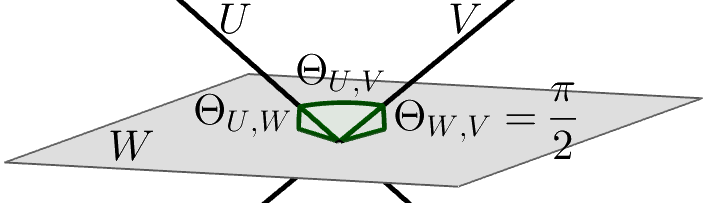}
		\caption{$\Theta_{U,V} \leq \Theta_{U,W} + \Theta_{W,V}$, even though $\Theta_{U,V} > \Theta_{U,W} + \Theta_{V,W}$}
	\label{fig:oriented triangle inequality_2}
\end{subfigure}
\caption{Oriented triangle inequalities for lines $U$ and $V$ and a plane $W$.}
\label{fig:oriented triangle inequality}
\end{figure}

Fig.\,\ref{fig:oriented triangle inequality} illustrates the oriented triangle inequality.
The angle asymmetry and the order of subspaces play a crucial role: in Fig.\,\ref{fig:oriented triangle inequality_2}, $\Theta_{W,V} = \frac{\pi}{2}$ and so $\Theta_{U,V} \leq \Theta_{U,W} + \Theta_{W,V}$, but $\Theta_{U,V} > \Theta_{U,W} + \Theta_{V,W}$.

The topologies induced by this asymmetric metric are natural for $G(X)$, as we argue in \Cref{sc:Other asymmetric metrics}, and reflect well the dimensional asymmetry of subspaces.
For example, a line $L$ and a plane $V$ have sensible neighborhoods in the backward topology $\tau^-$ of $G(\R^3)$:
for $0<r<\frac\pi2$, $B^-_r(L) = \{U\in G(X):\Theta_{U,L}<r\}$ has $\{0\}$ and lines inside a double cone around $L$ (Fig.\,\ref{fig:neighborhood line}), and $B^-_r(V)$ has $\{0\}$, lines and planes outside a double cone (Fig.\,\ref{fig:neighborhood plane}).
If $\Theta_{L,V}\rightarrow 0$, $L$ will end up in $B^-_r(V)$, 
and the constant $\Theta_{V,L} = \frac{\pi}{2}$ means $V$ is never any closer to being in $B^-_r(L)$.
Depending on the purpose, one can also use the forward topology $\tau^+$: $B^+_r(L) = \{U\in G(X):\Theta_{L,U}<r\}$ has $\R^3$, lines and planes intercepting the interior of the cone of Fig.\,\ref{fig:neighborhood line}, and $B^+_r(V)$ has $\R^3$ and planes outside the cone of Fig.\,\ref{fig:neighborhood plane}.
The symmetric topology $\tau$ is the same of the full Fubini-Study metric: $B_r(L)$ has only lines, and $B_r(V)$ only planes.
	\SELF{unless $r=\frac\pi2$}

While in $\tau$ the $G_p(X)$'s are disconnected from each other, in $\tau^\pm$ we have continuous paths linking subspaces of distinct dimensions: \eg for $U\subset W$, the paths
$V_-(t) = \begin{cases}
	U \text{ if } t < 0, \\
	W \text{ if } t \geq 0,
\end{cases}$
and
$V_+(t) = \begin{cases}
	U \text{ if } t \leq 0, \\
	W \text{ if } t > 0,
\end{cases}$
are continuous in $\tau^-$ and $\tau^+$, respectively, and so are the reversed paths.
\SELF{in $\tau^-/\tau^+$ strict ineq in smaller/larger subspace. \\ Revers. paths also contin.}
On the other hand, $\Theta_{U,W}$ is discontinuous in $\tau^\pm$ when dimensions change:
\eg if $U \subsetneq W$ then $U \in B^-_r(W)$ and $W \in B^+_r(U)$ for all $r$, but $\Theta_{U,U} = \Theta_{W,W} = 0$ and $\Theta_{W,U}=\frac\pi2$.
\SELF{in both topologies $\Theta_{U,W}$ is discontinuous in $U$ and $W$}

\begin{figure}
	\centering
	\begin{subfigure}[b]{0.45\textwidth}
		\includegraphics[width=\textwidth]{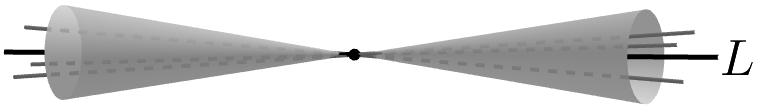}
		\caption{$B^-_r(L)$ has $\{0\}$ and all lines in the interior of a double cone}
		\label{fig:neighborhood line}
	\end{subfigure}
	\qquad
	\begin{subfigure}[b]{0.45\textwidth}
		\includegraphics[width=\textwidth]{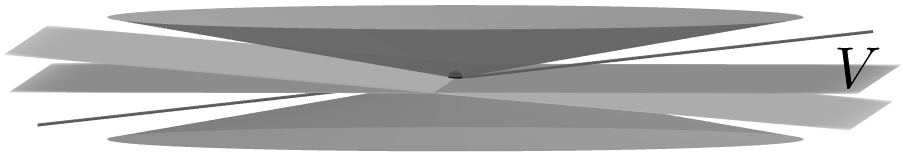}
		\caption{$B^-_r(V)$ has $\{0\}$ and all lines and planes in the exterior of a double cone}
		\label{fig:neighborhood plane}
	\end{subfigure}
	\caption{Dimensional asymmetry between backward balls of a line $L$ and a plane $V$}
	\label{fig:neighborhoods}
\end{figure}

The next lemma will help us obtain equality conditions for the triangle inequality.
As argued in \cite[p.\,519]{Qiu2005} it is important to analyze such extremal cases, which occur for example in geodesics whose lengths equal the distance between its endpoints.

\begin{lemma}\label{pr:linear combination blades}
	Let $A,B,C$ be nonzero $p$-blades with $[A], [B], [C]$ all distinct.
	If $B=\kappa A+\lambda C$ for $\kappa,\lambda \in \C$, there are $u,v,w\in X$ and a unit blade $D\in\bigwedge^{p-1} X$ with $v=\kappa u+\lambda w$, $A=u\wedge D$, $B=v\wedge D$, $C=w\wedge D$ and $[D] = [A]\cap [B]\cap [C]$. 
	We can choose $u$, $v$, $w$ in any complement of $[D]$, and if they are in $[D]^\perp$ then $\inner{u,w}=\inner{A,C}$.
\end{lemma}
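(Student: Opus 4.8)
The plan is to extract a common blade factor $D$ representing the triple intersection $[A]\cap[B]\cap[C]$, reduce everything to lines in a complementary subspace, and then apply the line case. First I would establish that the three spaces $[A],[B],[C]$ share a large common subspace. Since $B=\kappa A + \lambda C$ with all three blades nonzero and their spaces distinct, the relation $v\wedge B = \kappa\, v\wedge A + \lambda\, v\wedge C$ shows that a vector $v$ annihilates all three blades simultaneously except in a controlled way; more directly, I would argue that $[A]\cap[B]\cap[C]$ has dimension exactly $p-1$. The inclusion into each of $[A],[B],[C]$ (each of dimension $p$) forces the common intersection to have dimension at most $p-1$; the linear relation $B=\kappa A+\lambda C$ among the blades forces it to be at least $p-1$, because a single linear syzygy among three decomposable $p$-vectors can only occur when they pairwise meet in codimension one inside a common $(p+1)$-space. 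This is the structural heart of the argument.

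Once $\dim([A]\cap[B]\cap[C]) = p-1$ is in hand, I would pick a unit $(p-1)$-blade $D$ with $[D]=[A]\cap[B]\cap[C]$. Because $[D]\subset[A]$ and $\dim[A]=p=\dim[D]+1$, I can write $A = u\wedge D$ for some $u\in X$ (and similarly $B=v'\wedge D$, $C=w\wedge D$), absorbing scalars into the choice of $u,v',w$; here $u,v',w$ may be taken in any fixed complement of $[D]$ by projecting out the $[D]$-component, since $u'\wedge D = u\wedge D$ whenever $u-u'\in[D]$. Substituting into $B=\kappa A+\lambda C$ gives $v'\wedge D = (\kappa u+\lambda w)\wedge D$, so $(v'-\kappa u-\lambda w)\wedge D = 0$, which by \Cref{pr:wedge disjoint multivectors} (applied in the complement of $[D]$, where these vectors live and are disjoint from $[D]$) forces $v'=\kappa u+\lambda w$. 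Setting $v=v'$ delivers the required relation $v=\kappa u+\lambda w$ together with $A=u\wedge D$, $B=v\wedge D$, $C=w\wedge D$.

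For the final clause about the orthogonal complement, I would specialize the complement of $[D]$ to be $[D]^\perp$, so that $u,v,w\in[D]^\perp$. Then I compute the inner product of the $p$-blades directly: since $\inner{A,C}=\inner{u\wedge D, w\wedge D}$ and $D$ is a unit blade orthogonal to both $u$ and $w$ (as $u,w\in[D]^\perp$), the Gram determinant defining $\inner{u\wedge D,w\wedge D}$ factors. Using $\inner{A,C}=\det\bigl(\inner{\cdot,\cdot}\bigr)$ on the spanning vectors, the block-diagonal structure (the $u,w$ entries decoupled from the orthonormal $D$-block, which contributes a factor of $1$) yields $\inner{A,C}=\inner{u,w}\cdot\|D\|^2 = \inner{u,w}$, as claimed.

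I expect the main obstacle to be the structural dimension count $\dim([A]\cap[B]\cap[C])=p-1$: the upper bound is routine, but the lower bound requires genuinely using that the linear syzygy $B=\kappa A+\lambda C$ holds at the level of \emph{decomposable} multivectors, not merely that $[B]\subset[A]+[C]$. The cleanest route is probably to first show $[A]\cap[C]$ has dimension $p-1$ (two distinct $p$-blades whose span lies in a common $p+1$-dimensional space, forced because $A,C,B$ are linearly dependent and hence $[A\wedge C]=[A]+[C]$ has dimension $\le p+1$), then verify $[D]:=[A]\cap[C]\subset[B]$ using the linear relation. I would keep an eye on degenerate possibilities such as $\kappa=0$ or $\lambda=0$, which are excluded here precisely because $[A],[B],[C]$ are assumed distinct.
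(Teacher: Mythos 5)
Your overall scaffolding---factor out a unit blade $D$ spanning the triple intersection, reduce the relation $B=\kappa A+\lambda C$ to vectors in a complement via \Cref{pr:wedge disjoint multivectors}, then get $\inner{u,w}=\inner{A,C}$ from the block structure of the Gram determinant---is exactly the paper's, and those parts of your argument are correct. But the step you yourself call the structural heart, namely $\dim([A]\cap[B]\cap[C])=p-1$, is never actually proved, and the justification you sketch for it is a non sequitur. You claim the lower bound is ``forced because $A,C,B$ are linearly dependent and hence $[A\wedge C]=[A]+[C]$ has dimension $\le p+1$.'' Linear dependence alone implies nothing of the sort: take $A=e_1\wedge e_2$, $C=e_3\wedge e_4$ and $B=A+C$ in $\R^4$; these three $2$-vectors are linearly dependent, yet $A\wedge C\neq 0$ and $[A]+[C]$ is $4$-dimensional. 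What rules this configuration out in the lemma is that there $B$ fails to be a \emph{blade}; the entire content of the lower bound is that decomposability of $B$ forces the codimension-one intersection, and your sketch never uses decomposability of $B$ at that point. So the proposal is circular precisely where the real work lies.

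The paper's proof shows how to get this fact rather than assume it. It lets $[D]=[A]\cap[B]\cap[C]$ have whatever dimension $q$ it has (after checking, as you do, that $x\in[A]\cap[C]$ gives $x\wedge B=0$, so all pairwise intersections coincide with $[D]$, and that $\kappa,\lambda\neq0$), writes $A=A'\wedge D$, $B=B'\wedge D$, $C=C'\wedge D$ with $A',B',C'$ blades of grade $p-q$ in a complement $X'$ whose spaces are pairwise disjoint, and deduces $B'=\kappa A'+\lambda C'$ from \Cref{pr:wedge disjoint multivectors}. Then comes the idea your proposal is missing: for any nonzero $u'\in[A']$ and $w'\in[C']$,
\begin{equation*}
B'\wedge u'\wedge w' \;=\; \kappa\, A'\wedge u'\wedge w' + \lambda\, C'\wedge u'\wedge w' \;=\; 0,
\end{equation*}
while $B'\wedge u'\neq 0$ by disjointness, so $w'\in[B'\wedge u']=[B']\oplus[u']$; since $w'\notin[B']$ and $u'$ ranges over all of $[A']$, this forces $\dim[A']=1$, i.e.\ $A',B',C'$ are vectors and $q=p-1$. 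If you prefer your order of argument (dimension count first), you must supply an argument of essentially this kind---there is no route to the lower bound that does not exploit the decomposability of $B$.
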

\begin{proof}
	If $x\in [A]\cap [C]$ then $x\wedge B=x\wedge(\kappa A+\lambda C)=0$, so $x\in [B]$. 
	As the spaces are distinct, $\kappa,\lambda \neq 0$, so $A$ and $C$ are also linear combinations of the other blades.
	Thus $[A]\cap [C]=[A]\cap [B]=[B]\cap [C] = [A]\cap [B]\cap [C] = [D]$ for a unit $q$-blade $D$.
	Given a complement $X'$ of $[D]$, we have $A = A' \wedge D$, $B = B' \wedge D$ and $C = C' \wedge D$ for blades $A', B', C' \in \bigwedge^{p-q} X'$ with $[A']$, $[B']$ and $[C']$ all disjoint.
	As $(B'-\kappa A'-\lambda C')\wedge D=0$
		\OMIT{$B-\kappa A-\lambda C=0$}
	and $B'-\kappa A'-\lambda C'\in\bigwedge X'$, \Cref{pr:wedge disjoint multivectors}
		\OMIT{Precisa pois não sabe se \\ $B'-\kappa A'-\lambda C'$ é blade.}
	gives $B'=\kappa A'+\lambda C'$.
	So for any nonzero $u'\in [A']$ and $w'\in [C']$ we have $B'\wedge u'\wedge w' = 0$, and as	$B'\wedge u'\neq 0$ (by \Cref{pr:wedge disjoint multivectors})
		\OMIT{$[A']$ and $[B']$ are disjoint}
	this means $w'\in [B'\wedge u'] = [B']\oplus [u']$. 
	Since $w'\not\in [B']$ and $u'$ was chosen at will in $[A']$, this implies $\dim [A']=1$.
		\OMIT{If $w=b_1+\lambda_1 a_1=b_2+\lambda_2a_2$ for $a_1,a_2\in [A']$, $b_1,b_2\in [B']$ and $\lambda_1,\lambda_2\neq 0$, then $\lambda_1a_1-\lambda_2a_2=b_2-b_1\in [A']\cap [B']$, so $\lambda_1a_1=\lambda_2a_2$.}
	Thus $A',B',C'$ are vectors $u,v,w$.
	If $X'=[D]^\perp$ then $\inner{A,C} = \inner{u\wedge D,w\wedge D} = \inner{u,w}\cdot\|D\|^2$.
\end{proof}

\begin{proposition}\label{pr:triangle equality}
	$\Theta_{U,W} = \Theta_{U,V} + \Theta_{V,W}$ for $U,V,W \in G(X)$ in the following cases, and no other:
		\SELF{(\!\emph{\ref{it:VW}}) and (\!\emph{\ref{it:UV}}) intersect if $U\subset V \subset W$}
	\begin{enumerate}[i)]
		\item $V\subset W$, and $U\pperp W$ or $P_W(U)\subset V$; \label{it:VW} 	
		\item $U\subset V$, and $U\pperp W$ or $U^\perp\cap V\subset W$; \label{it:UV}
		\item $U = [u]\oplus R$, $V = [v]\oplus S$ and $W = [w]\oplus T$ for subspaces $R\subset S\subset T$ and aligned	$u,v,w\in T^\perp$ with $v=\kappa u+\lambda w$ for $\kappa, \lambda >0$.\label{it:UVW ABC}
			\SELF{Com $\geq$ teria interseção com (\!\emph{\ref{it:VW}}) and (\!\emph{\ref{it:UV}}), sem englobar nenhuma parte delas}
	\end{enumerate}
\end{proposition}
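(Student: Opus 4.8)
The proof of \Cref{pr:full Grassmannian} bounds $\Theta_{U,W}$ by the chain \eqref{eq:triangle1}--\eqref{eq:triangle3}, so $\Theta_{U,W}=\Theta_{U,V}+\Theta_{V,W}$ holds exactly when all three of those inequalities are equalities. The plan is to characterize equality in each one, using the same references invoked there, and assemble the result. I first dispose of the degenerate cases $\Theta_{U,V}=\frac\pi2$ or $\Theta_{V,W}=\frac\pi2$: then the right-hand side is $\geq\frac\pi2\geq\Theta_{U,W}$, so equality forces $\Theta_{U,W}=\frac\pi2$ together with $\Theta_{V,W}=0$ (resp.\ $\Theta_{U,V}=0$), i.e.\ $V\subset W$ and $U\pperp W$ (resp.\ $U\subset V$ and $U\pperp W$) by \Cref{pr:Theta}\ref{it:Theta 0 pi2}; these are the $U\pperp W$ branches of cases \ref{it:VW} and \ref{it:UV}. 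Henceforth I assume $\Theta_{U,V},\Theta_{V,W}<\frac\pi2$, so $\dim U\le\dim V\le\dim W$ and $U':=P_V(U)$, $U'':=P_WP_V(U)$ both have dimension $p:=\dim U$.

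In this regime \eqref{eq:triangle1} is an equality iff $\Theta_{U,W}=\Theta_{U,U''}$, which by \Cref{pr:Theta min max Theta subspaces}\ref{it:W' sub W} means $U\pperp W$ or $P_W(U)\subset U''$; \eqref{eq:triangle3} is an equality iff $\Theta_{U',W}=\Theta_{V,W}$, which by \Cref{pr:Theta min max Theta subspaces}\ref{it:V' sub V} means $U'\pperp W$ or $U'^\perp\cap V\subset W$; and \eqref{eq:triangle2} is the spherical equality for the three lines $\bigwedge^pU,\bigwedge^pU',\bigwedge^pU''$ in $\bigwedge X$ (via \Cref{pr:angle external powers} and \Cref{pr:Theta}\ref{it:Theta P(V)}). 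By \Cref{pr:spherical triangle inequality lines} the latter holds iff these lines are spanned by aligned blades $a,b,c$ with $b=\kappa a+\lambda c$ for $\kappa,\lambda\geq0$. If $\lambda=0$ then $U'=U$, i.e.\ $U\subset V$, so \eqref{eq:triangle1} and \eqref{eq:triangle2} are automatic and \eqref{eq:triangle3} supplies exactly case \ref{it:UV}. If $\kappa=0$ then $U'=U''$; \eqref{eq:triangle3} then forces $\Theta_{V,W}=0$, i.e.\ $V\subset W$, so \eqref{eq:triangle2} is automatic and \eqref{eq:triangle1} supplies $P_W(U)\subset U''=P_V(U)$, equivalently $P_W(U)\subset V$, giving case \ref{it:VW}.

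The substantive case is $\kappa,\lambda>0$ with $U,U',U''$ distinct, which I claim yields case \ref{it:UVW ABC}. Applying \Cref{pr:linear combination blades} to $a,b,c$ produces $u,v,w$ and a unit blade $D$ with $[D]=R:=U\cap U'\cap U''$, chosen in $R^\perp$, so that $U=[u]\oplus R$, $U'=[v]\oplus R$, $U''=[w]\oplus R$, $v=\kappa u+\lambda w$, and (from $\inner{u,w}=\inner{a,c}\geq0$) $u,v,w$ are aligned. Set $S:=R\oplus(U'^\perp\cap V)$ and $T:=R\oplus(U''^\perp\cap W)$, so $V=[v]\oplus S$ and $W=[w]\oplus T$ orthogonally, with $R\subset S$, $R\subset T$, $v\perp S$, $w\perp T$. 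It remains to check $S\subset T$ and $u,v,w\in T^\perp$. For $S\subset T$ I take $x\in U'^\perp\cap V$: equality of \eqref{eq:triangle3} gives $x\in W$; since $P_V(u)\in U'$ one has $u\perp S$, hence $\inner{x,u}=0$, while $x\perp U'\ni v$ gives $\inner{x,v}=0$, and $v=\kappa u+\lambda w$ with $\kappa,\lambda>0$ then forces $\inner{x,w}=0$, so $x\in T$. For the orthogonality, $w\perp T$ holds by construction; $v\perp(U''^\perp\cap W)$ follows from $U'\perp(U''^\perp\cap W)$ (as $P_W(U')=U''$); and $u\perp(U''^\perp\cap W)$ follows once $P_W(u)\in U''$, which I read off from $P_W(v)\in U''$, $P_W(w)=w$ and $v=\kappa u+\lambda w$. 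This gives the flag $R\subset S\subset T$ with $u,v,w\in T^\perp$, i.e.\ case \ref{it:UVW ABC}.

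Finally I must verify the converse, that each listed configuration does give equality: for cases \ref{it:VW} and \ref{it:UV} this is a direct application of \Cref{pr:Theta}\ref{it:Theta P(V)} and \Cref{pr:Theta min max Theta subspaces}, and for case \ref{it:UVW ABC} I use \Cref{pr:Theta orthog partition} to reduce $\Theta_{U,V},\Theta_{V,W},\Theta_{U,W}$ to $\theta_{u,v},\theta_{v,w},\theta_{u,w}$ and then invoke the equality clause of \Cref{pr:spherical triangle inequality lines}. The main obstacle is the bookkeeping in the substantive case: producing the common flag $R\subset S\subset T$ and proving the two orthogonality statements $S\subset T$ and $u,v,w\perp T$, where the three step-equalities must be combined in just the right way. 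The strictness $\kappa,\lambda>0$ (from distinctness, through \Cref{pr:linear combination blades}) together with the alignment is precisely what separates case \ref{it:UVW ABC} from the boundary configurations \ref{it:VW} and \ref{it:UV}.
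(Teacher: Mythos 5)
Your proof is correct and takes essentially the same route as the paper's: the same chain \eqref{eq:triangle1}--\eqref{eq:triangle3} with equality in each step characterized via \Cref{pr:Theta min max Theta subspaces}, \Cref{pr:angle external powers}, \Cref{pr:spherical triangle inequality lines} and \Cref{pr:linear combination blades}, and in the substantive case your $S=R\oplus(P_V(U)^\perp\cap V)$ and $T=R\oplus(P_W P_V(U)^\perp\cap W)$ are exactly the paper's $R\oplus S'$ and $R\oplus S'\oplus T'$. The only differences are organizational (you split on the values of $\kappa,\lambda$ and dispose of the right-angle degeneracies first, whereas the paper assumes (i) and (ii) fail and deduces that $U$, $P_V(U)$, $P_W P_V(U)$ are distinct before invoking the strict case of the spherical lemma); the one point to patch is that $\kappa,\lambda>0$ with two of these three subspaces equal forces all three to be equal, a sub-case your trichotomy formally omits but which reduces immediately to your $\lambda=0$ analysis.
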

\begin{proof}
	By \Cref{pr:Theta min max Theta subspaces}, 
		\OMIT{\ref{pr:Theta}\ref{it:Theta 0 pi2}, e \ref{pr:Theta min max Theta subspaces}\ref{it:V' sub V} p/i, \ref{pr:Theta min max Theta subspaces}\ref{it:W' sub W} p/ii}
	(\!\emph{\ref{it:VW}}) corresponds to $\Theta_{V,W}=0$ and $\Theta_{U,W} = \Theta_{U,V}$, 
	and (\!\emph{\ref{it:UV}}) to $\Theta_{U,V}=0$ and $\Theta_{U,W} = \Theta_{V,W}$.
	(\!\emph{\ref{it:UVW ABC}}) has 
		\OMIT{\ref{pr:Theta}(\ref{it:Theta W+U},\ref{it:orth complem inter}) }
	$\Theta_{U,W}=\theta_{[u],[w]}$,
		\OMIT{as $u$ and $w$ are aligned}
	$\Theta_{U,V} = \theta_{[u],[v]}$, $\Theta_{V,W} = \theta_{[v],[w]}$,
	and \Cref{pr:spherical triangle inequality lines} gives the equality.

	Suppose $\Theta_{U,W} = \Theta_{U,V} + \Theta_{V,W}$ but (\!\emph{\ref{it:VW}}) and (\!\emph{\ref{it:UV}}) do not hold (and so $\Theta_{U,V},\Theta_{V,W}\neq\frac\pi 2$).
	Equalities in \eqref{eq:triangle1} and \eqref{eq:triangle3} give 
	$\Theta_{U,P_W P_V(U)} = \Theta_{U,W}$ and $\Theta_{P_V(U),W} = \Theta_{V,W}$.
		\OMIT{$= \Theta_{P_V(U),P_W P_V(U)}$}
	These angles and $\Theta_{U,P_V(U)} = \Theta_{U,V}$ are nonzero,
	so $U$, $P_V(U)$ and $P_W P_V(U)$ are distinct $p$-subspaces.
	By \Cref{pr:angle external powers}, equality in \eqref{eq:triangle2} means $\theta_{J,L} = \theta_{J,K} + \theta_{K,L}$ for  $J = \bigwedge^p U$, $K = \bigwedge^p P_V(U)$ and $L = \bigwedge^p P_W P_V(U)$.
	\Cref{pr:spherical triangle inequality lines} gives aligned $p$-blades $A,B,C$ with $[A] = U$, $[B] = P_V U$, $[C] = P_W P_V(U)$ and $B = \kappa A + \lambda C$ for $\kappa,\lambda > 0$ (strict as the spaces are distinct).
	\Cref{pr:linear combination blades} gives $u,v,w \in X$ orthogonal to $R = [A]\cap [B]\cap [C]$, 
	with   
	$U = [u]\oplus R$, $P_V(U) = [v]\oplus R$, $P_W P_V(U) = [w]\oplus R$,
	$v=\kappa u +\lambda w$ and $\inner{u,w}=\inner{A,C} \geq 0$, so $u,v,w$ are aligned. 
	
	Let $V = [v]\oplus R \oplus S'$ be an orthogonal partition.
	As $P_V u \in [v]\oplus R$, we have $u \perp S'$ and $w = \frac{v - \kappa u}{\lambda} \perp S'$.
	And as $\Theta_{P_V(U),W} = \Theta_{V,W} \neq \frac{\pi}{2}$, 
		\OMIT{equality in \eqref{eq:triangle3}}
	\Cref{pr:Theta min max Theta subspaces}\ref{it:V' sub V} gives $S' = P_V(U)^\perp\cap V \subset W$.
	So we have another orthogonal partition $W = [w]\oplus R \oplus S' \oplus T'$.
	As $v\in P_V(U)$, we have $P_W v \in [w]\oplus R$, so $v \perp T'$ and $u = \frac{v-\lambda w}{\kappa} \perp T'$.
	All conditions of (\!\emph{\ref{it:UVW ABC}}) are satisfied with $S = R\oplus S'$ and $T=R \oplus S' \oplus T'$.
\end{proof}

For equal dimensions the equality conditions become simpler \cite{Jiang1996}:
\CITE{Jiang1996: Real case, $1<p<n$, all distinct}

\begin{corollary}\label{pr:triangle equality same dim}
	$\Theta_{U,W} = \Theta_{U,V} + \Theta_{V,W}$ for $U,V,W\in G_p(X)$ if, and only if, $V=U$ or $W$, or
	$U = [u]\oplus R$, $V = [v]\oplus R$ and $W = [w]\oplus R$ for $R \in G_{p-1}(X)$ and aligned	$u,v,w\in R^\perp$ with $v=\kappa u+\lambda w$ for $\kappa, \lambda > 0$.
\end{corollary}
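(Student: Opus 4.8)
The plan is to read this corollary off \Cref{pr:triangle equality} by restricting its three cases to triples with $\dim U=\dim V=\dim W=p$. Since that proposition characterizes the equality exactly (``in the following cases, and no other''), it suffices to show that, for equal-dimensional triples, the disjunction of its cases (\emph{\ref{it:VW}}), (\emph{\ref{it:UV}}), (\emph{\ref{it:UVW ABC}}) is equivalent to the disjunction ``$V=U$'', ``$V=W$'', or the displayed third alternative. Both directions of the corollary then follow at once from the biconditional already established in the proposition.

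First I would dispose of cases (\emph{\ref{it:VW}}) and (\emph{\ref{it:UV}}). In (\emph{\ref{it:VW}}) the hypothesis $V\subset W$ with $\dim V=\dim W$ forces $V=W$; conversely, if $V=W$ then $V\subset W$ holds and $P_W(U)\subset W=V$, so the accompanying side condition is automatically met and we are back in case (\emph{\ref{it:VW}}). By the symmetric argument applied to $U\subset V$, case (\emph{\ref{it:UV}}) is equivalent to $V=U$, the side condition $U^\perp\cap V=\{0\}\subset W$ being automatic. Thus these two cases match exactly the alternatives ``$V=U$ or $W$'', and in each the equality holds trivially, since one of the two summands vanishes by \Cref{pr:Theta}\ref{it:Theta 0 pi2}.

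It remains to specialize case (\emph{\ref{it:UVW ABC}}), where $U=[u]\oplus R$, $V=[v]\oplus S$, $W=[w]\oplus T$ with $R\subset S\subset T$ and aligned $u,v,w\in T^\perp$ satisfying $v=\kappa u+\lambda w$ for $\kappa,\lambda>0$. Counting dimensions, each of $U,V,W$ is the direct sum of a line with $R$, $S$, $T$ respectively, whence $\dim R=\dim S=\dim T=p-1$; together with the chain $R\subset S\subset T$ this forces $R=S=T\in G_{p-1}(X)$, and then $u,v,w\in T^\perp=R^\perp$. This is precisely the third alternative of the corollary, and conversely any such configuration is an instance of case (\emph{\ref{it:UVW ABC}}) with $S=T=R$.

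Assembling the three equivalences yields the statement. No step is genuinely difficult; the only point requiring care is the bookkeeping in the last paragraph — that the nested chain $R\subset S\subset T$ must collapse to a single $(p-1)$-subspace once all ambient dimensions coincide — together with the routine observation that the side conditions in the first two cases become vacuous under equality of dimensions.
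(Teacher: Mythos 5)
Your proposal is correct and is exactly the argument the paper intends: the corollary is stated as an immediate specialization of \Cref{pr:triangle equality}, and your case-by-case reduction (cases (\emph{i}) and (\emph{ii}) collapsing to $V=W$ and $V=U$ with vacuous side conditions, and the chain $R\subset S\subset T$ collapsing to a single $(p-1)$-subspace in case (\emph{iii})) is precisely the bookkeeping the paper leaves to the reader.
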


\subsection{Other properties}\label{sc:properties}

The asymmetric angle is linked to various products of Grassmann and Clifford algebras \cite{Dorst2007,Mandolesi_Products}, which we use to obtain more properties.

The \emph{contraction} or \emph{interior product} $A \lcontr B$ of $A,B\in\bigwedge X$ is defined by
$\inner{C,A \lcontr B} = \inner{A\wedge C,B}$ for any $C\in\bigwedge X$ (there are other conventions \cite{Browne2012,Dorst2007,Mandolesi_Contractions}).
If $\F=\C$, it is conjugate linear on $A$.
If $A\in\bigwedge^p X$ and $B\in\bigwedge^q X$ then $A\lcontr B \in\bigwedge^{q-p} X$.
It is asymmetric, with $A\lcontr B = 0$ if $p>q$.
If $p=q$ then $A\lcontr B = \inner{A,B}$.
If $p\leq q$ and $B= v_1\wedge\cdots\wedge v_q$, 
	\SELF{$\epsilon_{\ii\ii'} = (-1)^{\|\ii\|+\frac{p(p+1)}{2}}$, as ordering $\ii\ii'$ takes $(i_1-1)+\cdots+(i_p-p)$ transpositions}
\begin{equation}\label{eq:contraction coordinate decomposition} 
	A \lcontr B = \sum_{\ii\in\I_p^q} \epsilon_{\ii\ii'} \,\inner{A,B_\ii}\, B_{\ii'},
\end{equation}
with $B_\ii = v_\ii$  
for $\ii\in\II_p^q$
(so $B = \epsilon_{\ii\ii'} \, B_\ii\wedge B_{\ii'}$).
	\SELF{We do not require $B\neq 0$}
For $(v_1,\ldots,v_n)$ orthonormal and $\ii,\jj\in\II^n$, if $\ii\subset\jj$ then $v_{\ii} \lcontr v_{\jj} = \epsilon_{\ii(\jj-\ii)}  v_{\jj-\ii}$, otherwise $v_{\ii} \lcontr v_{\jj} = 0$.
For nonzero blades, $A\lcontr B=0$ if $[A]\pperp [B]$, otherwise $[A\lcontr B] = [A]^\perp \cap [B]$.

\begin{proposition}\label{pr:products Theta}
	$\|A\lcontr B\| = \|A\|\|B\| \cos \Theta_{[A],[B]}$ for blades $A\in \bigwedge^p X$ and $B\in \bigwedge^q X$. If $p=q$, $|\inner{A,B}| = \|A\|\|B\|\cos\Theta_{[A],[B]}$.
		\CITE{real case: Edelman1999 p.337}
\end{proposition}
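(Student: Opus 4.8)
The plan is to reduce the computation to the orthogonal projection $P_W A$, where $W = [B]$, and then invoke the earlier product formula for $\cos\Theta_{[A],[B]}$. Write $V=[A]$ and $W=[B]$, both assumed nonzero (the degenerate cases where $A$ or $B$ is a nonzero scalar are immediate, since then the contraction is scalar multiplication and $\cos\Theta=1$).

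First I would prove the key identity $A\lcontr B = (P_W A)\lcontr B$, expressing that the contraction into $B$ sees only the part of $A$ lying in $W$. This follows from the defining adjoint property together with the fact that $P_W$ is an algebra homomorphism on $\bigwedge X$: for every $C\in\bigwedge X$,
\[
	\inner{C, A\lcontr B} = \inner{A\wedge C, B} = \inner{P_W(A\wedge C), B} = \inner{P_W A \wedge P_W C, B},
\]
where the middle equality holds because $B\in\bigwedge W$, while on the other hand
\[
	\inner{C, (P_W A)\lcontr B} = \inner{P_W A \wedge C, B} = \inner{P_W A \wedge P_W C, B},
\]
using $P_W A\in\bigwedge W$. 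Since these agree for all $C$, the two contractions coincide.

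Next I would evaluate $P_W A$ in associated principal bases. Let $(e_1,\ldots,e_p)$ and $(f_1,\ldots,f_q)$ be principal bases with angles $\theta_i$, and write $A = \alpha\, e_1\wedge\cdots\wedge e_p$ and $B = \beta\, f_1\wedge\cdots\wedge f_q$ with $|\alpha| = \|A\|$, $|\beta| = \|B\|$. If $p>q$, \Cref{pr:P ei}\ref{it:P e1...ep} gives $P_W A = 0$, hence $A\lcontr B = 0$, matching $\cos\Theta_{V,W}=0$ from \Cref{pr:Theta prod cos}. If $p\leq q$, the same lemma gives $P_W A = \alpha\cos\theta_1\cdots\cos\theta_p\, f_1\wedge\cdots\wedge f_p$, so by conjugate-linearity of the contraction in its first slot,
\[
	A\lcontr B = \overline{\alpha}\,\beta\,\cos\theta_1\cdots\cos\theta_p\,(f_1\wedge\cdots\wedge f_p)\lcontr(f_1\wedge\cdots\wedge f_q).
\]
As the $f_j$ are orthonormal, the orthonormal contraction rule yields $(f_1\wedge\cdots\wedge f_p)\lcontr(f_1\wedge\cdots\wedge f_q) = f_{p+1}\wedge\cdots\wedge f_q$, a unit $(q-p)$-blade. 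Taking norms and using $|\overline{\alpha}\beta| = \|A\|\|B\|$ together with \Cref{pr:Theta prod cos} gives $\|A\lcontr B\| = \|A\|\|B\|\prod_{i=1}^p\cos\theta_i = \|A\|\|B\|\cos\Theta_{V,W}$.

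Finally, for $p=q$ the contraction is the scalar $A\lcontr B = \inner{A,B}$, whose norm is $|\inner{A,B}|$, so the general formula specializes to the stated $|\inner{A,B}| = \|A\|\|B\|\cos\Theta_{[A],[B]}$. I expect the only subtle step to be the reduction $A\lcontr B = (P_W A)\lcontr B$; everything afterward is direct bookkeeping in a principal basis. An alternative to that reduction is to substitute $e_i = \cos\theta_i\, f_i + \sin\theta_i\, g_i$ with orthonormal $g_i\in W^\perp$ and expand the contraction by hand, but the homomorphism argument is cleaner and sidesteps the case analysis on vanishing $\theta_i$.
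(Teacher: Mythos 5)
Your proof is correct, but it routes the computation differently from the paper. The paper's proof is a one-liner: after reducing to $0<p\leq q$ and to $A=e_1\wedge\cdots\wedge e_p$, $B=f_1\wedge\cdots\wedge f_q$ for associated principal bases, it applies the coordinate expansion \eqref{eq:contraction coordinate decomposition} of $A\lcontr B$ in the basis $\{f_\jj\}$; since $\inner{e_i,f_j}=\cos\theta_i\,\delta_{ij}$, every term $\inner{A,B_\ii}$ with $\ii\neq(1,\ldots,p)$ vanishes, leaving $\|A\lcontr B\|=\inner{e_{1\cdots p},f_{1\cdots p}}=\prod_i\cos\theta_i$, and \eqref{eq:Theta prod cos} finishes. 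You instead first establish the identity $A\lcontr B=(P_W A)\lcontr B$ from the adjoint definition of the contraction plus the homomorphism property $P_W(A\wedge C)=P_W A\wedge P_W C$, then evaluate $P_W A$ by \Cref{pr:P ei}\ref{it:P e1...ep} and apply the orthonormal contraction rule. Both arguments end in a principal-basis computation and lean on \eqref{eq:Theta prod cos}, so they are close in substance, but your intermediate lemma is a genuinely different ingredient: it is more conceptual, makes transparent that the contraction only sees the part of $A$ lying in $W$, and in fact would let you bypass principal angles entirely --- since $[P_W A]\subset[B]$, one has $\|(P_W A)\lcontr B\|=\|P_W A\|\,\|B\|$, giving the result straight from the definition $\cos\Theta_{V,W}=\|P_W A\|/\|A\|$. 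The paper's route buys brevity; yours buys a reusable identity and a proof less dependent on the combinatorics of \eqref{eq:contraction coordinate decomposition}.

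One small imprecision, not a gap: your parenthetical dismissal of the degenerate cases ("the contraction is scalar multiplication and $\cos\Theta=1$") only describes the case where $A$ is a scalar. If $B$ is a nonzero scalar and $A$ is not, then $A\lcontr B=0$ for grade reasons and $\Theta_{[A],\{0\}}=\frac{\pi}{2}$, so the identity holds because both sides vanish --- the opposite mechanism. The case is still immediate, but the stated reason is wrong for it.
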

\begin{proof}
	We can assume $0<p\leq q$, 
		\OMIT{\ref{pr:Theta}\ref{it:Theta 0 pi2}}
	$A = e_1 \wedge \cdots \wedge  e_p$ and $B = f_1 \wedge \cdots \wedge  f_q$ for associated principal bases $(e_1,\ldots,e_p)$ and $(f_1,\ldots,f_q)$ of $[A]$ and $[B]$.
	By \eqref{eq:contraction coordinate decomposition}, $\|A\lcontr B\| = \inner{e_{1\cdots p},f_{1\cdots p}} = \prod_{i=1}^p \cos\theta_i$, so it follows from \eqref{eq:Theta prod cos}.
\end{proof}

The angle asymmetry matches that of $A\lcontr B$, so the formula for $\|A\lcontr B\|$ holds even if $p>q$.
These formulas are an easy way to compute $\Theta_{V,W}$. 

\begin{example}\label{ex:contraction}
	Let $(u_1,\ldots,u_5)$ be the canonical basis of $\R^5$, $v_1=2u_1-u_2$, $v_2 = 2u_1+u_3$, $v_3=u_3$, $w_1=u_2+u_5$, $w_2=u_3-u_4$ and $w_3=u_4$.
	For $A = v_{12} = 2u_{12}+2u_{13}-u_{23}$, $B = w_{12} = u_{23} - u_{24} - u_{35} +u_{45}$, $C = w_{123} = u_{234}+u_{345}$ and $D=v_{123} = 2u_{123}$, 
	we find 
	$\|A\| = 3$, $\|B\| = 2$, $\|C\| = \sqrt{2}$,
	$\inner{A,B} = -1$, $A\lcontr C = -u_4$ and $\inner{C,D} = 0$,
	so $\Theta_{[A],[B]} = \cos^{-1}\frac{1}{6} \cong 80.4^\circ$, $\Theta_{[A],[C]} = \cos^{-1}\frac{1}{3\sqrt{2}} \cong 76.4^\circ$ and $\Theta_{[C],[D]} = 90^\circ$.
\end{example}

We can write the formulas using matrices.
The first one is asymmetric, but works for similar angles if $p\leq q$, and is simpler than another of \cite{Gunawan2005}.

\begin{proposition}\label{pr:formula any base dimension}
	Given bases $(v_1,\ldots,v_p)$ of $V$ and $(w_1,\ldots,w_q)$ of $W$, let 
	$\AA_{p \times p} =\big(\inner{v_i,v_j}\big)$, 
	$\BB_{q \times q} =\big(\inner{w_i,w_j}\big)$ and
	$\CC_{q \times p} =\big(\inner{w_i,v_j}\big)$. 
	Then $\cos^2 \Theta_{V,W} = \frac{\det(\bar{\CC}^T  \BB^{-1}\CC)}{\det \AA}$.
	If $p = q$ then $\cos^2 \Theta_{V,W} = \frac{|\det \CC\,|^2}{\det \AA \cdot\det \BB}$.
\end{proposition}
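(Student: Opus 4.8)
The plan is to evaluate the defining ratio $\cos\Theta_{V,W} = \|P_W A\|/\|A\|$ for the blade $A = v_1\wedge\cdots\wedge v_p$ representing $V$, writing both norms as Gram determinants of the given (not necessarily orthonormal) bases. The denominator is immediate: by the definition of the inner product of blades, $\|A\|^2 = \inner{A,A} = \det\big(\inner{v_i,v_j}\big) = \det\AA$. For the numerator I would use that $P_W$ extends to $\bigwedge X$ multiplicatively, so $P_W A = P_W v_1 \wedge\cdots\wedge P_W v_p$, and hence $\|P_W A\|^2 = \det\big(\inner{P_W v_i, P_W v_j}\big)$.

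First I would record the matrix of the orthogonal projection onto $W$ in the basis $(w_1,\ldots,w_q)$. The vector $\sum_{k,l} w_k (\BB^{-1})_{kl}\inner{w_l, x}$ lies in $W$ and has the same inner product with every $w_m$ as $x$ does (using $\BB\,\BB^{-1}=\Id$), so by uniqueness of the orthogonal projection it equals $P_W x$. Applying this to $x = v_j$ and recalling $\inner{w_l, v_j} = \CC_{lj}$ gives $P_W v_j = \sum_k (\BB^{-1}\CC)_{kj}\, w_k$.

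Next I would compute the Gram matrix of the projected vectors. Since $P_W$ is self-adjoint and idempotent, $\inner{P_W v_i, P_W v_j} = \inner{P_W v_i, v_j}$, and substituting the expression above yields $\inner{P_W v_i, v_j} = \sum_k \overline{(\BB^{-1}\CC)_{ki}}\,\CC_{kj}$. The one delicate point, and the step I expect to be the main obstacle, is the conjugate/transpose bookkeeping in the Hermitian case: using $\overline{\inner{w_k,v_i}} = \inner{v_i,w_k}$ together with $\BB$ being Hermitian (so $\overline{\BB^{-1}}^{\,T} = \BB^{-1}$), this sum is exactly $(\bar{\CC}^T \BB^{-1}\CC)_{ij}$. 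Taking the determinant gives $\|P_W A\|^2 = \det(\bar{\CC}^T\BB^{-1}\CC)$, and dividing by $\det\AA$ produces the first formula; one checks as a sanity test that $\bar{\CC}^T\BB^{-1}\CC$ is Hermitian positive semidefinite, consistent with a squared cosine, and that this agrees with \Cref{pr:determinant projection} when the bases are orthonormal.

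Finally, for $p=q$ the matrix $\CC$ is square, so multiplicativity of the determinant gives $\det(\bar{\CC}^T\BB^{-1}\CC) = \overline{\det\CC}\cdot(\det\BB)^{-1}\cdot\det\CC = |\det\CC|^2/\det\BB$, whence $\cos^2\Theta_{V,W} = |\det\CC|^2/(\det\AA\cdot\det\BB)$, completing the proof. (Alternatively, the $p=q$ case follows directly from \Cref{pr:products Theta}, since $\inner{A,B} = \det(\inner{v_i,w_j}) = \overline{\det\CC}$.)
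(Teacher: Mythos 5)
Your proof is correct, but it takes a genuinely different route from the paper's. The paper never computes $P_W A$ directly: it invokes \Cref{pr:products Theta} to trade $\cos\Theta_{V,W}$ for the contraction norm $\|A\lcontr B\|$ (with $B = w_1\wedge\cdots\wedge w_q$), expands $\|A\lcontr B\|^2 = \inner{A\wedge(A\lcontr B),B}$ via the coordinate decomposition \eqref{eq:contraction coordinate decomposition}, and then reduces the resulting sum with Laplace expansions and Schur's determinant identity to $\det\BB\cdot\det(\bar{\CC}^T\BB^{-1}\CC)$, disposing of the case $p>q$ separately by a rank argument. You instead work straight from the definition $\cos\Theta_{V,W} = \|P_W A\|/\|A\|$: multiplicativity of $P_W$ on $\bigwedge X$, the explicit projection formula in a non-orthonormal basis, and self-adjointness plus idempotence identify the Gram matrix of $(P_W v_1,\ldots,P_W v_p)$ as exactly $\bar{\CC}^T\BB^{-1}\CC$, after which the determinant formula drops out. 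Your conjugation bookkeeping under the paper's left-conjugate-linear convention is handled correctly---Hermiticity of $\BB^{-1}$ is precisely what is needed---and your argument is uniform in $p$ and $q$: when $p>q$ the projected vectors are linearly dependent, so both the Gram determinant and $\cos\Theta_{V,W}$ vanish, with no case split required. What your approach buys is elementarity and transparency: the matrix $\bar{\CC}^T\BB^{-1}\CC$ acquires a geometric meaning as a Gram matrix, and nothing beyond the definition of the inner product on blades is used. What the paper's approach buys is integration with its contraction-product machinery: the identity $\|A\lcontr B\| = \|A\|\|B\|\cos\Theta_{[A],[B]}$ is already in hand, and the same Schur-identity technique is recycled for the companion sine formula in \Cref{pr:formula complementary angle bases}, so the two proofs run in parallel there.
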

\begin{proof}
	Assume $p\leq q$, otherwise $\Theta_{V,W} = \frac \pi 2$ and $\det(\bar{\CC}^T \! \BB^{-1}\CC) = 0$, as it is a $p\times p$ matrix of rank at most $q$.
	Let $\CC_\ii$ and $\BB_{\ii'}$ be the $p\times p$ and $(q-p)\times q$ matrices formed by the lines of $\CC$ with indices in $\ii\in\I_p^q$, 
	and the lines of $\BB$ with indices not in $\ii$.
	For $A=v_1\wedge\cdots \wedge v_p$ and $B=w_1\wedge\cdots\wedge w_q$, Laplace expansion, Schur's identity 
		\CITE{Muir2003 p.80 \\ Brualdi1983}
	and \eqref{eq:contraction coordinate decomposition} give
	$\|A\lcontr B\|^2 = \inner{A\wedge(A\lcontr B),B} 
	= \sum\limits_{\ii\in\I_p^q} \epsilon_{\ii\ii'} \,\inner{B_\ii,A}\, \inner{A\wedge B_{\ii'},B}
	=\sum\limits_{\ii\in\I_p^q} \epsilon_{\ii\ii'} \, \det \CC_\ii \cdot \begin{vsmallmatrix}
		\bar{\CC}^T \\[1pt] \BB_{\ii'}
	\end{vsmallmatrix}
	= (-1)^{p(q-p)} \sum\limits_{\ii\in\I_p^q} \epsilon_{\ii\ii'} \, \det \CC_\ii \cdot \begin{vsmallmatrix}
		\BB_{\ii'} \\[1pt] \bar{\CC}^T
	\end{vsmallmatrix}
	= (-1)^{pq+p} \cdot \begin{vsmallmatrix}
		\CC & \BB \\[1pt] \mathbf{0} &  \bar{\CC}^T
	\end{vsmallmatrix} 
	= (-1)^p \cdot \begin{vsmallmatrix}
		\mathbf{0} &  \bar{\CC}^T \\[1pt] \CC & \BB 
	\end{vsmallmatrix} 
	= (-1)^p \det \BB \cdot \det(-\bar{\CC}^T \BB^{-1}\CC)
	= \det \BB \cdot \det(\bar{\CC}^T \BB^{-1}\CC)$.
	The result follows from \Cref{pr:products Theta}. 
\end{proof}

\begin{example}\label{ex:formula distinct dim}
	In $\R^4$, let $V=[v]$ and $W=[w_{12}]$ for $v=(1,0,1,0)$, $w_1=(0,1,1,0)$, $w_2=(1,2,2,-1)$.
	With $\AA=(2)$,
	$\BB=\begin{psmallmatrix}
		2 & 4 \\ 4 & 10
	\end{psmallmatrix}$,
	$\CC=\begin{psmallmatrix}
		1 \\ 3
	\end{psmallmatrix}$, 
	we find $\Theta_{V,W}= 45^\circ$, as one can verify by projecting $v$ on $W$. 
	Switching the roles of $V$ and $W$, we now have 
	$\AA=\begin{psmallmatrix}
		2 & 4 \\ 4 & 10
	\end{psmallmatrix}$,
	$\BB=(2)$,
	$\CC=(1 \ 3)$ and $\Theta_{W,V}= 90^\circ$, as expected since $\dim W>\dim V$.
\end{example}

\begin{example}\label{ex:formula bases}
	In $\C^3$, with $\xi=e^{\im\frac{2\pi}{3}}$, let  $V=[v_{12}]$ and $W=[w_{12}]$ for $v_1=(1,-\xi,0), v_2=(0,\xi,-\xi^2)$, $w_1=(1,0,0)$ and $w_2=(0,\xi,0)$. 
	With $\AA=\begin{psmallmatrix} 2 & -1 \\ -1 & 2 \end{psmallmatrix}$, $\BB= \begin{psmallmatrix} 1 & 0 \\ 0 & 1	\end{psmallmatrix}$, $\CC=\begin{psmallmatrix} 1 & 0 \\ -1 & 1	\end{psmallmatrix}$
	we find
	$\Theta_{V,W}=\cos^{-1}\frac{1}{\sqrt{3}} \cong 54.7^\circ$. 
	As $v=(\xi,\xi^2,-2)\in V$
		\OMIT{$v=\xi v_1 + 2\xi v_2$}
	and $w=(1,\xi,0)\in W$ are orthogonal to $V\cap W=[v_1]$, \Cref{pr:Theta}\ref{it:orth complem inter} gives
	$\Theta_{V,W}=\Theta_{[v],[w]}=\gamma_{v,w}$, and the Hermitian angle formula confirms the result.
\end{example}

\begin{proposition}\label{pr:Pythagorean identity}
	Let $(w_1,\ldots,w_n)$ be an orthogonal basis of $X$, and $V\in G_p(X)$.
	Then  $\sum_{\ii\in\I_p^n} \cos^2\Theta_{V,[w_\ii]} = 1$. 
\end{proposition}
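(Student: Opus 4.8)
The plan is to reduce the identity to Parseval's relation in $\bigwedge^p X$, using the equal-dimension form of \Cref{pr:products Theta} to rewrite each cosine as an inner product.

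First I would fix a blade $A$ representing $V$, so $0 \neq A \in \bigwedge^p X$. Each coordinate subspace $[w_\ii]$ with $\ii \in \I_p^n$ is a $p$-subspace, represented by the $p$-blade $w_\ii$ (nonzero, since the $w_i$ are), so \Cref{pr:products Theta} applies in the case of equal grades and gives
\[\cos \Theta_{V,[w_\ii]} = \frac{|\inner{A,w_\ii}|}{\|A\|\,\|w_\ii\|}, \qquad \text{hence} \qquad \cos^2 \Theta_{V,[w_\ii]} = \frac{|\inner{A,w_\ii}|^2}{\|A\|^2\,\|w_\ii\|^2}.\]

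Next I would record that $\{w_\ii\}_{\ii \in \I_p^n}$ is an orthogonal basis of $\bigwedge^p X$: since $(w_1,\ldots,w_n)$ is orthogonal, $\inner{w_\ii,w_\jj} = \det(\inner{w_{i_a},w_{j_b}})$ vanishes whenever $\ii \neq \jj$ (any index of $\ii$ missing from $\jj$ produces a zero row), while $\|w_\ii\|^2 = \prod_a \|w_{i_a}\|^2 \neq 0$. This is exactly the orthogonal-basis statement recalled in the preliminaries. Applying Parseval's identity for an orthogonal (not necessarily orthonormal) basis to $A \in \bigwedge^p X$ then yields
\[\|A\|^2 = \sum_{\ii \in \I_p^n} \frac{|\inner{w_\ii,A}|^2}{\|w_\ii\|^2}.\]
Dividing by $\|A\|^2$ and comparing with the expression above for $\cos^2 \Theta_{V,[w_\ii]}$ gives the claimed identity $\sum_{\ii} \cos^2 \Theta_{V,[w_\ii]} = 1$.

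I expect no real obstacle here; the argument is essentially Parseval combined with the product formula. The only points requiring minor care are verifying the equal-dimension hypothesis of \Cref{pr:products Theta} (immediate, as every $[w_\ii]$ is a $p$-subspace) and using the weighted, orthogonal form of Parseval with the factors $\|w_\ii\|^2$ rather than assuming orthonormality. In the complex case, conjugate linearity in the left slot is harmless, since only $|\inner{A,w_\ii}|^2 = |\inner{w_\ii,A}|^2$ enters.
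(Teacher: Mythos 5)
Your proof is correct and follows essentially the same route as the paper: rewrite each $\cos^2\Theta_{V,[w_\ii]}$ via the equal-grade case of \Cref{pr:products Theta} and invoke Parseval in $\bigwedge^p X$. The only cosmetic difference is that the paper first normalizes the basis (``we can assume the basis is orthonormal'') and uses plain Parseval, whereas you keep the general orthogonal basis and use the weighted form with the factors $\|w_\ii\|^2$.
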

\begin{proof}
	We can assume the basis is orthonormal,
	so $\sum_{\ii\in\I_p^n} |\inner{A,w_\ii}|^2 = \|A\|^2$ for $A\in\bigwedge^p V$.
	The result follows from \Cref{pr:products Theta}.
\end{proof}

If $\F=\C$ then $\dim X_\R = 2\dim X$, so there are more real coordinate subspaces $[w_\ii]$ than complex ones.
As angles in $X_\R$ are larger, this allows the above sum to hold for their smaller but more numerous cosines.

\begin{example}
	In $\C^2$, if $v=(\frac{\im}{2},\frac{\sqrt{3}}{2})$, $w_1=(1,0)$, $w_2=(0,1)$
	and $V=[v]$ then $\Theta_{V,[w_1]} = 60^\circ$ and $\Theta_{V,[w_2]} = 30^\circ$.
	In the underlying $\R^4$, $V_\R = [u_{12}]$ for $u_1 = v = (0,\frac12,\frac{\sqrt{3}}{2},0)$ and $u_2 = \im v = (-\frac12,0,0,\frac{\sqrt{3}}{2})$,
	and with the canonical basis $(y_1,\ldots,y_4) = (w_1,\im w_1, w_2, \im w_2)$ we obtain 
	$\Theta_{V_\R,[y_{12}]} = \cos^{-1}(\frac{1}{4}) \cong 75.5^\circ$,
	$\Theta_{V_\R,[y_{13}]} = \Theta_{V_\R,[y_{24}]} = \cos^{-1}(\frac{\sqrt{3}}{4}) \cong 64.3^\circ$,
	$\Theta_{V_\R,[y_{14}]} = \Theta_{V_\R,[y_{23}]} = 90^\circ$ and
	$\Theta_{V_\R,[y_{34}]} = \cos^{-1}(\frac{3}{4}) \cong 41.4^\circ$.
	In both cases the squared cosines add up to $1$.
\end{example}

\begin{example}
	In \Cref{ex:formula bases}, $(w_1,w_2,w_3)$ with $w_3=(0,0,\xi^2)$ is an orthonormal basis.  
	The unitary $T=\left(\begin{smallmatrix}
		0 & 0 & \xi \\ 
		\xi & 0 & 0 \\ 
		0 & \xi & 0
	\end{smallmatrix}\right)$ maps $[w_{12}]\mapsto [w_{23}]$, $[w_{23}]\mapsto [w_{13}]$ and preserves $V$, so $\Theta_{V,[w_{12}]} = \Theta_{V,[w_{23}]} = \Theta_{V,[w_{13}]}$
		\SELF{$T:w_1\mapsto w_2\mapsto w_3\mapsto w_1$, $Tv_1=v_2$,\\ $Tv_2= -(v_1+v_2)$}
	and \Cref{pr:Pythagorean identity} gives $\cos\Theta_{V,[w_{ij}]}=\frac{1}{\sqrt{3}}$.
	While $V$ is equally distant from all $[w_{ij}]$'s, in the underlying $\R^6$ this is not true for $V_\R$ and the 15 coordinate 4-spaces of the orthonormal basis $(w_1,\im w_1,w_2,\im w_2, w_3, \im w_3)$, or we would have $\cos \Theta_{V_\R,W_\R} = \frac{1}{\sqrt{15}}$,
	contradicting \Cref{pr:Theta R C}.
\end{example}

\Cref{pr:Pythagorean identity} leads \cite{Mandolesi_Pythagorean} to a known real volumetric Py\-thag\-o\-re\-an theorem 
\CITE{Conant1974} 
(the squared volume of a region in $V$ is the sum of squared volumes of its projections on all $[w_\ii]$'s), and a simpler complex one with non-squared volumes (by \Cref{pr:projection factor Theta}) and less $[w_\ii]$'s.
This is a prime example of the advantages of $\Theta_{V,W}$ over $\Theta_{V_\R,W_\R}$.
In \cite{Mandolesi_Born} we use this complex Pythagorean theorem, and an interpretation of quantum probabilities as projection factors 
(with \Cref{pr:Pythagorean identity} giving unit total probability), 
to propose a solution to the probability problem of Everettian quantum mechanics and show why the quantum space must be complex.

In general, the asymmetric angle with an orthogonal complement is not the usual complement, \ie $\Theta_{V,W^\perp} \neq \frac\pi2 - \Theta_{V,W}$ (\eg take a plane $V$ and a line $W$ in $\R^3$), and so $\sin \Theta_{V,W} \neq \cos\Theta_{V,W^\perp}$.
This sine involves projections not to $W^\perp$ but to certain coordinate subspaces partially orthogonal to $W$:

\begin{proposition}\label{pr:sine}
	For nonzero $V\in G_p(X)$ and $W\in G_q(X)$, extending a principal basis $(f_1,\ldots,f_q)$ of $W$ \wrt $V$ to an orthonormal basis $(f_1,\ldots,f_n)$ of $X$ we have
	$\sin^2 \Theta_{V,W} = \sum_{\ii\in\II_p^n, \ii\not\subset 1\cdots q} \cos^2 \Theta_{V,[f_\ii]}$. 
\end{proposition}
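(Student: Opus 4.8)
The plan is to obtain this identity as the ``complement'' of the Pythagorean identity of \Cref{pr:Pythagorean identity}. Since $(f_1,\ldots,f_n)$ is orthonormal, that proposition gives $\sum_{\ii\in\II_p^n}\cos^2\Theta_{V,[f_\ii]}=1$. I would split this sum according to whether $\ii\subset 1\cdots q$ or not, noting that for $\ii\in\II_p^n$ the condition $\ii\subset 1\cdots q$ is exactly $\ii\in\II_p^q$. It then suffices to show that the terms with $\ii\subset 1\cdots q$ add up to $\cos^2\Theta_{V,W}$, for then $\sin^2\Theta_{V,W}=1-\cos^2\Theta_{V,W}$ leaves precisely the sum over $\ii\not\subset 1\cdots q$.

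To prove $\sum_{\ii\in\II_p^q}\cos^2\Theta_{V,[f_\ii]}=\cos^2\Theta_{V,W}$, let $A$ be a blade representing $V$. Each $[f_\ii]$ with $\ii\in\II_p^q$ is a $p$-dimensional coordinate subspace of $W$, so the equal-dimension case of \Cref{pr:products Theta} gives $\cos\Theta_{V,[f_\ii]}=|\inner{A,f_\ii}|/\|A\|$. The blades $\{f_\ii\}_{\ii\in\II_p^q}$ form an orthonormal basis of $\bigwedge^p W$ (as $(f_1,\ldots,f_q)$ is an orthonormal basis of $W$), so by Parseval $\sum_{\ii\in\II_p^q}|\inner{A,f_\ii}|^2=\|P_{\bigwedge^p W}A\|^2=\|P_W A\|^2$, using $P_{\bigwedge^p W}A=P_W A$ as in the proof of \Cref{pr:angle external powers}. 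Dividing by $\|A\|^2$ and recalling $\cos\Theta_{V,W}=\|P_W A\|/\|A\|$ yields the claim, and the result follows.

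There is no real obstacle here; the one point to get right is recognizing that the coordinate subspaces $[f_\ii]$ contained in $W$ assemble, through Parseval in $\bigwedge^p W$, into the single projection factor $\cos^2\Theta_{V,W}$, while the remaining coordinate subspaces are exactly those not contained in $W$. The hypothesis that $(f_1,\ldots,f_q)$ be a principal (rather than an arbitrary orthonormal) basis of $W$ is not actually needed for this argument, but I would keep it as stated to match the surrounding setup. Finally, the case $p>q$ needs no separate treatment: then $\II_p^q=\emptyset$, so the sum over $\ii\subset 1\cdots q$ is empty, in agreement with $\Theta_{V,W}=\frac\pi2$ and $\sin^2\Theta_{V,W}=1$.
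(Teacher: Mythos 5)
Your proof is correct, and while it shares the paper's outer skeleton (subtracting from the Pythagorean identity of \Cref{pr:Pythagorean identity}), the key step is handled by a genuinely different mechanism. The paper exploits the principal-basis structure: there, $f_{p+1},\ldots,f_q \perp V$, so among the indices $\ii\subset 1\cdots q$ every term with $\ii\neq 1\cdots p$ vanishes \emph{individually} (since $V\pperp[f_\ii]$), and the single surviving term satisfies $\Theta_{V,[f_{1\cdots p}]}=\Theta_{V,W}$ by \Cref{pr:Theta}\ref{it:Theta W+U}; the case $p>q$ is disposed of separately. You instead show that the terms with $\ii\subset 1\cdots q$ sum \emph{collectively} to $\cos^2\Theta_{V,W}$, via Parseval in $\bigwedge^p W$ applied to the orthonormal basis $\{f_\ii\}_{\ii\in\II_p^q}$ together with $P_{\bigwedge^p W}A=P_W A$ and \Cref{pr:products Theta}. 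This buys you something real: your argument never uses that $(f_1,\ldots,f_q)$ is principal, so it proves the stronger statement that the identity holds for \emph{any} orthonormal basis of $X$ whose first $q$ vectors span $W$ --- in a principal basis your sum $\sum_{\ii\in\II_p^q}\cos^2\Theta_{V,[f_\ii]}=\cos^2\Theta_{V,W}$ simply collapses to one nonzero term, recovering the paper's picture. It also makes the case $p>q$ uniform (empty sum versus $\cos^2\Theta_{V,W}=0$), where the paper treats it separately. The paper's route is shorter given the principal-angle machinery already built (\Cref{pr:P ei}, \Cref{df:PO}); yours is more self-contained and more general, at the cost of invoking Parseval explicitly.
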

\begin{proof}
	Follows from \Cref{pr:Pythagorean identity}, as if $p>q$ then $\Theta_{V,W} = \frac\pi2$ and $\ii\not\subset 1\cdots q$ for all $\ii\in\II_p^n$,
	and if $p\leq q$ then 
		\OMIT{\ref{pr:Theta}\ref{it:Theta W+U}}
	$\sin^2 \Theta_{V,W} = 1 - \cos^2 \Theta_{V,W} = \left( \sum_{\ii\in\II_p^n} \cos^2 \Theta_{V,[f_\ii]} \right)  - \cos^2 \Theta_{V,[f_{1\cdots p}]} = \sum_{\ii\in\II_p^n, \ii\neq 1\cdots p} \cos^2 \Theta_{V,[f_\ii]}$
	and we have $\Theta_{V,[f_\ii]} = \frac\pi2$ if $\ii\subset 1\cdots q$ and $\ii\neq 1\cdots p$, as $f_i \perp V$ for $p<i\leq q$.
\end{proof}

\begin{figure}
	\centering
	\includegraphics[width=0.7\linewidth]{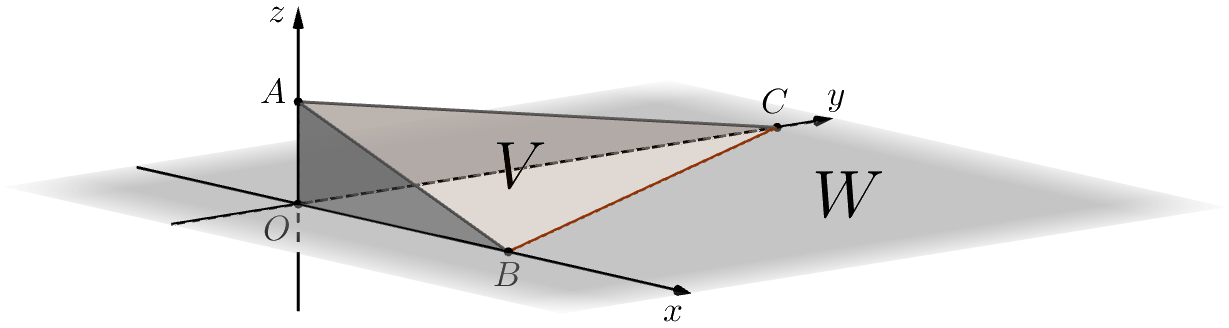}
	\caption{If $V$ is (parallel to) the plane of the triangle $ABC$, and $W$ is the $xy$ plane, then $\cos^2 \Theta_{V,W} = \frac{\area^2(OBC)}{\area^2(ABC)}$ and $\sin^2 \Theta_{V,W} = \frac{\area^2(OAB) + \area^2(OAC)}{\area^2(ABC)}$}
	\label{fig:projections-sin}
\end{figure}

With \Cref{pr:projection factor Theta}, this means that, given a unit volume in $V$, $\sin^2 \Theta_{V,W}$ is the sum of the squared (if $\F=\R$) volumes of its projections on all coordinate $p$-subspaces $[f_\ii]$ not contained in $W$ (Fig.\,\ref{fig:projections-sin}).
It gives a geometric interpretation for the Binet-Cauchy distance $d_{BC} = \sin \Theta_{V,W}$.
	\OMIT{\ref{pr:Theta equal dim}}

\begin{example}\label{ex:asymmetric angle complement}
	In \Cref{ex:real principal angles}, both principal angles of $V$ and $W^\perp=[f_{34}]$ are also $45^\circ$,
	so $\Theta_{V,W^\perp}=60^\circ = \Theta_{V,W}$.
	We also find $\Theta_{V,[f_\ii]} = 60^\circ$ for $\ii = 14$, $23$ or $34$, and $\Theta_{V,[f_\ii]} = 90^\circ$ for $\ii = 13$, $24$, $35$ or $45$, so that
	$\sum_{\ii\in\II_2^5, \ii\not\subset 125} \cos^2 \Theta_{V,[f_\ii]} = \frac{3}{4} = \sin^2 \Theta_{V,W}$.
\end{example}

\section{Related angles}\label{sc:Related angles}

Once a principal angle is $\frac\pi2$, $\Theta_{V,W}$ tells us nothing else about the others.
But we can gain some extra information from $\Theta_{V,W^\perp}$ and $\Theta_{V^\perp,W}$, which are, in general, somewhat independent of $\Theta_{V,W}$ \cite{Mandolesi_Grassmann}.

From $\Theta_{V,W^\perp}$ we form an angle which 
is similar to the max-correlation in some aspects, but gives finer information, taking all $\theta_i$'s into account.

\begin{definition}\label{df:Upsilon}
	$\Upsilon_{V,W}=\frac{\pi}{2}-\Theta_{V,W^\perp}$ is the \emph{disjointness angle} of $V,W\in G(X)$.
\end{definition}

The name is due to \ref{it:Upsilon 0 pi2} below: $\Upsilon_{V,W} = 0$ unless $V$ and $W$ are disjoint, so it gives a measure of how far $V$ and $W$ are from intersecting non-trivially, what is relevant in many applications.
The angle only reaches $\frac\pi2$ when $V\perp W$, so, unlike $\Theta_{V,W}$, it still gives more information when $V\pperp W$.

\begin{proposition}\label{pr:Upsilon}
	Let $V\in G_p(X)$ and $W\in G_q(X)$ be represented by blades $A$ and $B$, respectively.
	\begin{enumerate}[i)]
		\item $\Upsilon_{V,W}=0 \Leftrightarrow V\cap W\neq\{0\}$, and $\Upsilon_{V,W}=\frac\pi2 \Leftrightarrow V\perp W$. \label{it:Upsilon 0 pi2}
		\item $\sin \Upsilon_{V,W} = \frac{\|P_{W^\perp} A\|}{\|A\|}$. \label{it:Upsilon arcsin}
		\item $\Upsilon_{V,W} = \theta_{\bigwedge^p V, (\bigwedge^p(W^\perp))^\perp}$. \label{it:Upsilon angle exterior algebra}
		\item $\|A\wedge B\| = \|A\|\|B\| \sin \Upsilon_{[A],[B]}$. \label{it:exterior product}
		\item $\Upsilon_{V,W} = \Upsilon_{W,V}$. \label{it:symmetry complementary}
		\item If $V,W\neq\{0\}$ have principal angles $\theta_1,\ldots,\theta_m$ for $m=\min\{p,q\}$ then $\sin \Upsilon_{V,W} = \prod_{i=1}^m \sin\theta_i$.\label{it:prod sin}
		\item $\Upsilon_{V,W} = \Upsilon_{V,P_W(V)}$. \label{it:complem P(V)}
	\end{enumerate}
\end{proposition}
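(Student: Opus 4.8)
The plan is to treat the seven items in turn, each time using the definition $\Upsilon_{V,W}=\frac\pi2-\Theta_{V,W^\perp}$ to rewrite a statement about $\Upsilon$ as one about $\Theta_{V,W^\perp}$, and then invoke the results already established for the asymmetric angle. First, \ref{it:Upsilon arcsin} is immediate: $\sin\Upsilon_{V,W}=\cos\Theta_{V,W^\perp}=\|P_{W^\perp}A\|/\|A\|$ straight from the definitions. For \ref{it:Upsilon 0 pi2} I would read off both extremes from \Cref{pr:Theta}\ref{it:Theta 0 pi2}: $\Upsilon_{V,W}=0\Leftrightarrow\Theta_{V,W^\perp}=\frac\pi2\Leftrightarrow V\pperp W^\perp$, and $V\pperp W^\perp$ means $(W^\perp)^\perp\cap V=W\cap V\neq\{0\}$; likewise $\Upsilon_{V,W}=\frac\pi2\Leftrightarrow\Theta_{V,W^\perp}=0\Leftrightarrow V\subset W^\perp\Leftrightarrow V\perp W$.

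For \ref{it:Upsilon angle exterior algebra} I would combine \Cref{pr:angle external powers}, which gives $\Theta_{V,W^\perp}=\theta_{\bigwedge^p V,\bigwedge^p W^\perp}$, with the elementary fact that for a line $\LL=\Span\{A\}$ and a subspace $\WW$ of an inner product space one has $\cos^2\theta_{\LL,\WW}+\cos^2\theta_{\LL,\WW^\perp}=1$ (since $\|A\|^2=\|P_\WW A\|^2+\|P_{\WW^\perp}A\|^2$), so $\theta_{\LL,\WW^\perp}=\frac\pi2-\theta_{\LL,\WW}$; applied to $\WW=\bigwedge^p W^\perp$ inside $\bigwedge^p X$ this is exactly $\Upsilon_{V,W}$. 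For \ref{it:exterior product} I would avoid principal bases: since $B$ spans the top grade $\bigwedge^q W$ of $\bigwedge W$, wedging $A$ with $B$ annihilates every component of $A$ meeting $\bigwedge^{\geq 1}W$, leaving $A\wedge B=P_{W^\perp}A\wedge B$; as $[P_{W^\perp}A]\subset W^\perp$ is orthogonal to $W=[B]$, the norm factorizes as $\|A\wedge B\|=\|P_{W^\perp}A\|\,\|B\|$, and \ref{it:Upsilon arcsin} turns this into $\|A\|\|B\|\sin\Upsilon_{[A],[B]}$.

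The computational heart is \ref{it:prod sin}. Taking associated principal bases and a blade $A=\|A\|\,e_1\wedge\cdots\wedge e_p$ for $V$, I would use $P_W e_i=\cos\theta_i f_i$ for $i\le m$ and $P_W e_i=0$ for $i>m$ (from the proof of \Cref{pr:P ei}) to get $P_{W^\perp}e_i=e_i-\cos\theta_i f_i$ for $i\le m$ and $P_{W^\perp}e_i=e_i$ for $i>m$. A short orthogonality check, using $\inner{e_i,f_j}=0$ for $i\neq j$ and orthonormality of the two bases, shows these $p$ vectors are mutually orthogonal with norms $\sin\theta_1,\dots,\sin\theta_m,1,\dots,1$; hence $\|P_{W^\perp}A\|=\|A\|\prod_{i=1}^m\sin\theta_i$, and \ref{it:Upsilon arcsin} gives the claim (uniformly for $p\le q$ and $p>q$). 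Item \ref{it:symmetry complementary} then follows because $\prod_{i=1}^m\sin\theta_i$ and $m=\min\{p,q\}$ are symmetric in $V,W$ and $\Upsilon\in[0,\frac\pi2]$, where $\sin$ is injective; the degenerate cases $V=\{0\}$ or $W=\{0\}$ I would verify directly against \ref{it:Upsilon 0 pi2}. Finally, for \ref{it:complem P(V)} I would invoke \Cref{pr:P ei}\ref{it:PV}: the principal angles of $V$ and $P_W(V)$ are $\theta_1,\dots,\theta_k$ with $k=\max\{i:\theta_i\neq\frac\pi2\}$, and the angles discarded in passing from $W$ to $P_W(V)$ are exactly those equal to $\frac\pi2$, which contribute factors $\sin\frac\pi2=1$ to the product in \ref{it:prod sin}; thus $\sin\Upsilon_{V,W}=\sin\Upsilon_{V,P_W(V)}$, and injectivity of $\sin$ on $[0,\frac\pi2]$ closes the argument.

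I expect the main obstacle to be \ref{it:prod sin}, namely checking the mutual orthogonality of the projected principal vectors $e_i-\cos\theta_i f_i$ in a way that covers both $p\le q$ and $p>q$ at once, together with the bookkeeping in \ref{it:complem P(V)} and \ref{it:symmetry complementary}, where $P_W(V)$ or one of $V,W$ may change dimension (or vanish) while the relevant product of sines stays fixed; everything else reduces cleanly to \ref{it:Upsilon arcsin} and the already-proven properties of $\Theta$.
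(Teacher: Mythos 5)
Your proof is correct, and for items (i)--(iv) and (vii) it runs along essentially the same lines as the paper's: the paper dismisses (i)--(iii) as immediate consequences of the corresponding properties of $\Theta$ (exactly the reductions you spell out, including the complementarity $\frac\pi2-\theta_{\bigwedge^p V,\bigwedge^p(W^\perp)}=\theta_{\bigwedge^p V,(\bigwedge^p(W^\perp))^\perp}$ for (iii)), it proves (iv) by your very identity $A\wedge B=(P_{W^\perp}A)\wedge B$ followed by the orthogonal norm factorization, and it derives (vii) from (i), (vi) and the same lemma on $P_W(V)$ that you cite. Where you genuinely diverge is in the order and mechanism of (v) and (vi). The paper proves (v) first, directly from (iv), since $\|A\wedge B\|=\|B\wedge A\|$; this costs one line and automatically covers the degenerate cases $V=\{0\}$ or $W=\{0\}$. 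It then proves (vi) assuming $p\leq q$ (legitimate, given (v)), by factoring the unit blade $A\wedge B$ in associated principal bases into mutually orthogonal pieces, $\sin\Upsilon_{V,W}=\|A\wedge B\|=\|e_1\wedge f_1\|\cdots\|e_p\wedge f_p\|\cdot\|f_{p+1}\|\cdots\|f_q\|=\sin\theta_1\cdots\sin\theta_p$. You instead prove (vi) straight from (ii), computing $P_{W^\perp}A$ as a wedge of the mutually orthogonal vectors $e_i-\cos\theta_i f_i$ (norms $\sin\theta_i$) and, when $p>q$, the extra $e_i$'s (norm $1$); your orthogonality check does go through, including the cross terms $\inner{f_i,e_j}=0$ for $i\leq m<j$, so your argument is uniform in $p\leq q$ versus $p>q$ and needs no appeal to symmetry. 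The price is that your (v), deduced from (vi), only applies to nonzero subspaces, forcing the separate verification of the degenerate cases, which you correctly supply. Both routes are sound: the paper's is slightly more economical because the graded commutativity of $\wedge$ yields (v) with no case analysis, while yours makes (vi) self-contained and dimension-uniform.
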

\begin{proof}
	\emph{(\ref{it:Upsilon 0 pi2}--\ref{it:Upsilon angle exterior algebra})} Immediate.
		\OMIT{\ref{pr:Theta}\ref{it:Theta 0 pi2} \\
			\ref{it:Upsilon angle exterior algebra}: \ref{pr:angle external powers}, $\Upsilon_{V,W} = \frac{\pi}{2} - \theta_{\bigwedge^p V, \bigwedge^p(W^\perp)} = \theta_{\bigwedge^p V, (\bigwedge^p(W^\perp))^\perp}$}
	\emph{(\ref{it:exterior product})} $\|A\wedge B\| = \|(P_{W^\perp} A)\wedge B\| = \|P_{W^\perp} A\| \|B\| = \|A\|\|B\|\sin \Upsilon_{V,W}$.
	\emph{(\ref{it:symmetry complementary})} Follows from \ref{it:exterior product}.
		\OMIT{or \ref{pr:Theta}\ref{it:Theta perp perp}}
	\emph{(\ref{it:prod sin})} Assuming $p\leq q$, 
	$A = e_1 \wedge \cdots \wedge  e_p$ and $B = f_1 \wedge \cdots \wedge  f_q$ for associated principal bases,
	we have $\sin \Upsilon_{V,W} = \| A\wedge  B\| = \|e_1\wedge f_1\|\cdots\|e_p\wedge f_p\| \cdot \|f_{p+1}\| \cdots \|f_q \|  = \sin\theta_1\cdots\sin\theta_p$.
	\emph{(\ref{it:complem P(V)})} Follows from \ref{it:Upsilon 0 pi2}, \ref{it:prod sin} and \Cref{pr:P ei}\ref{it:PV}.
\end{proof}

\begin{figure}
	\centering
	\includegraphics[width=0.7\linewidth]{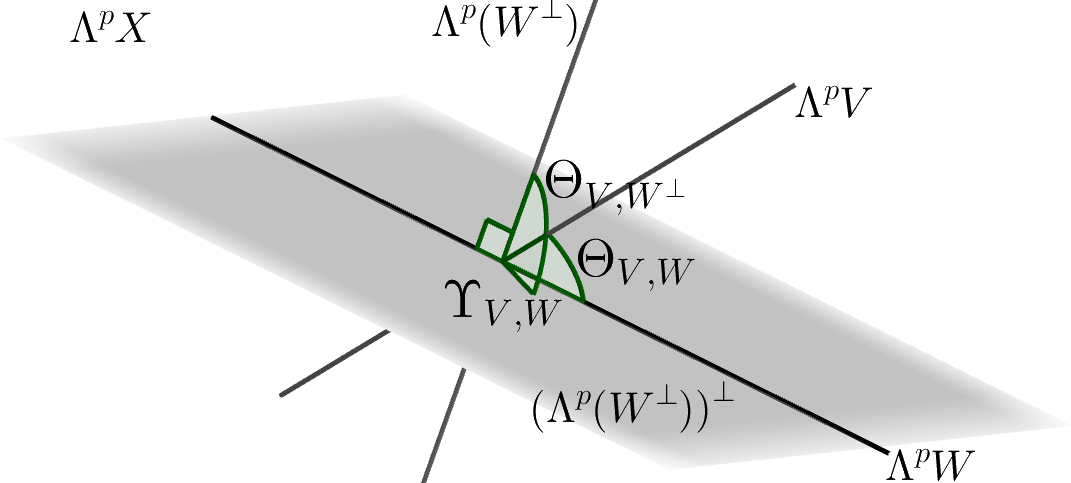}
	\caption{$\Theta_{V,W}$, $\Theta_{V,W^\perp}$ and $\Upsilon_{V,W}$ as angles in $\bigwedge^p X$, for $p = \dim V$, between a line $\bigwedge^p V$ and subspaces $\bigwedge^p W$, $\bigwedge^p (W^\perp)$ and $(\bigwedge^p (W^\perp))^\perp$ (shown as lines and a plane)}
	\label{fig:phi}
\end{figure}

By \ref{it:Upsilon 0 pi2}, $\Upsilon_{V,W}$ does not satisfy a triangle inequality (\eg take two lines and their plane).
By \ref{it:Upsilon arcsin}, $\sin \Upsilon_{V,W}$ (squared, if $\F=\C$) is the projection factor from $V$ to $W^\perp$.
Fig.\,\ref{fig:phi} illustrates \ref{it:Upsilon angle exterior algebra}.
The formula for $\|u \wedge v\|$ is generalized by \ref{it:exterior product}.
The unexpected symmetry in \ref{it:symmetry complementary} means $\Theta_{V,W^\perp} = \Theta_{W,V^\perp}$.
For $V,W\neq\{0\}$, \ref{it:prod sin} implies $\Upsilon_{V,W} \leq \theta_i \leq \Theta_{V,W}$ for all $i$ (in general, with strict inequalities).
	\SELF{If $V\neq\{0\}$. If $W = \{0\}$ both are $\frac\pi2$}
The product of $\sin\theta_i$'s has been studied in \cite{Afriat1957,Miao1992}, but not linked to a particular angle.

\begin{example}\label{ex:Upsilon via wedge}
	In \Cref{ex:contraction}, 
	$A\wedge B = -2u_{1235} + 2u_{1245} + 2u_{1234} + 2u_{1345}-u_{2345}$,
	$A\wedge C = 2u_{12345}$ and 
	$C\wedge D = 0$, 
	so that
		\OMIT{\ref{pr:Upsilon}\ref{it:exterior product}} 
	$\Upsilon_{[A],[B]} = \sin^{-1}\frac{\sqrt{17}}{6} \cong 43.4^\circ$,
	$\Upsilon_{[A],[C]} = \sin^{-1}\frac{\sqrt{2}}{3} \cong 28.1^\circ$ and
	$\Upsilon_{[C],[D]} = 0$.
\end{example}

\begin{proposition}\label{pr:formula complementary angle bases}
	Given bases $(v_1,\ldots,v_p)$ of $V$ and $(w_1,\ldots,w_q)$ of $W$, let $\AA_{p \times p} =\big(\inner{v_i,v_j}\big)$,
	$\BB_{q \times q} =\big(\inner{w_i,w_j}\big)$ and 
	$\CC_{q \times p} =\big(\inner{w_i,v_j}\big)$. Then
	$\sin^2 \Upsilon_{V,W} = \frac{\det(\AA - \bar{\CC}^T \BB^{-1} \CC)}{\det \AA} = \frac{\det(\BB - \CC \AA^{-1} \bar{\CC}^T)}{\det \BB}$.
\end{proposition}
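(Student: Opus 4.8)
The plan is to reduce the statement to a single Gram-determinant identity and then apply Schur's identity to a block matrix, exactly in the spirit of the proof of \Cref{pr:formula any base dimension}. The natural entry point is \Cref{pr:Upsilon}\ref{it:exterior product}, which gives $\|A\wedge B\|^2 = \|A\|^2\|B\|^2 \sin^2\Upsilon_{V,W}$ for blades $A = v_1\wedge\cdots\wedge v_p$ and $B = w_1\wedge\cdots\wedge w_q$ representing $V$ and $W$. Since $\|A\|^2 = \det\AA$ and $\|B\|^2 = \det\BB$ are the Gram determinants of the chosen bases, this already isolates $\sin^2\Upsilon_{V,W} = \|A\wedge B\|^2/(\det\AA\,\det\BB)$, and the whole task becomes to express $\|A\wedge B\|^2$ through the three matrices.

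First I would identify $\|A\wedge B\|^2$ with the Gram determinant of the concatenated list $(v_1,\ldots,v_p,w_1,\ldots,w_q)$, using the blade inner product formula $\inner{X,Y} = \det(\inner{x_i,y_j})$ from the preliminaries. The resulting $(p+q)\times(p+q)$ Hermitian matrix has the block form $G = \begin{psmallmatrix} \AA & \bar\CC^T \\ \CC & \BB\end{psmallmatrix}$: the diagonal blocks are $\AA$ and $\BB$ by definition, while the off-diagonal blocks come from $\inner{v_i,w_j} = \overline{\inner{w_j,v_i}}$, which one must check equals $(\bar\CC^T)_{ij}$ under the convention that $\inner{\cdot,\cdot}$ is conjugate linear in the left entry. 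This yields $\sin^2\Upsilon_{V,W} = \det G/(\det\AA\,\det\BB)$.

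The two stated formulas then drop out by applying Schur's identity to $G$ relative to each diagonal block: $\det G = \det\BB\cdot\det(\AA - \bar\CC^T\BB^{-1}\CC) = \det\AA\cdot\det(\BB - \CC\AA^{-1}\bar\CC^T)$. Dividing by $\det\AA\,\det\BB$ produces both expressions simultaneously; in particular their equality is then automatic, reflecting the symmetry $\Upsilon_{V,W} = \Upsilon_{W,V}$ of \Cref{pr:Upsilon}\ref{it:symmetry complementary}. Both inverses are legitimate because $\AA$ and $\BB$, being Gram matrices of bases, are positive definite.

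The main obstacle is bookkeeping rather than conceptual: getting the conjugates right so that the off-diagonal blocks read $\bar\CC^T$ and $\CC$ (and not, say, $\CC^T$ and $\bar\CC$), and confirming that $G$ is genuinely the Gram matrix of the concatenated list in the correct order of factors. A slightly more hands-on alternative avoids $A\wedge B$ altogether: compute the Gram matrix of the projected vectors $P_{W^\perp}v_j = v_j - \sum_k(\BB^{-1}\CC)_{kj}w_k$ directly, using self-adjointness and idempotency of $P_{W^\perp}$ to obtain $\inner{P_{W^\perp}v_i, P_{W^\perp}v_j} = \AA_{ij} - (\bar\CC^T\BB^{-1}\CC)_{ij}$, and then invoke \Cref{pr:Upsilon}\ref{it:Upsilon arcsin} together with $P_{W^\perp}A = P_{W^\perp}v_1\wedge\cdots\wedge P_{W^\perp}v_p$; this route delivers the first formula directly, with the second again following from the $\Upsilon$ symmetry.
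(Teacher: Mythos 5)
Your proposal is correct and is essentially the paper's own proof: the paper likewise identifies $\|A\wedge B\|^2$ with the block Gram determinant $\begin{vsmallmatrix} \AA & \bar{\CC}^T \\ \CC & \BB \end{vsmallmatrix}$ and then applies Schur's identity together with \Cref{pr:Upsilon}\ref{it:exterior product}. Your bookkeeping of the conjugate-linear convention (giving off-diagonal blocks $\bar\CC^T$ and $\CC$) is right, and the alternative route via $P_{W^\perp}$ is a valid bonus but not needed.
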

\begin{proof}
	For $A=v_1\wedge\cdots\wedge v_p$ and $B=w_1\wedge\cdots\wedge w_q$,
	$\|A\wedge B\|^2 = \begin{vsmallmatrix}
		\AA & \bar{\CC}^T \\ 
		\CC & \BB
	\end{vsmallmatrix}$,
	so the result follows from Schur's identity 
		\OMIT{Schur: $\begin{vsmallmatrix}
				A & B \\ 
				C & D
			\end{vsmallmatrix} =$ \\ $\det(D)\det(A-BD^{-1}C)=$ \\ $\det(A)\det(D-CA^{-1}B)$}
	and \Cref{pr:Upsilon}\ref{it:exterior product}.
\end{proof}

\begin{example}
	In \Cref{ex:formula distinct dim} we find $\Upsilon_{V,W} = \Upsilon_{W,V} = 45^\circ$, 
	so lengths projected from $V$ to $W^\perp$ contract by $\frac{\sqrt{2}}{2}$, as do areas from $W$ to $V^\perp$.
	Principal angles confirm it: $V$ and $W^\perp$ have only $45^\circ$, while $W$ and $V^\perp$ have $0^\circ$ and $45^\circ$.
	In \Cref{ex:formula bases} we find
		\SELF{or \ref{pr:complementary orthon bases} with orthon bases $(\frac{v_1}{\sqrt{2}},\frac{v}{\sqrt{6}})$ and $(\frac{v_1}{\sqrt{2}},\frac{w}{\sqrt{2}})$}
	$\Upsilon_{V,W}=0$, as $V\cap W\neq\{0\}$.
		\OMIT{\ref{pr:Upsilon}\ref{it:Upsilon 0 pi2}}
\end{example}

\begin{corollary}\label{pr:complementary orthon bases}
	$\sin^2 \Upsilon_{V,W} = \det(\mathds{1}_{p\times p} - \bar{\mathbf{P}}^T \mathbf{P}) = \det(\mathds{1}_{q\times q} - \mathbf{P}\bar{\mathbf{P}}^T)$, 
	where $\mathbf{P}_{q\times p}$ is a matrix for the orthogonal projection $V\rightarrow W$ in orthonormal bases of $V$ and $W$.
\end{corollary}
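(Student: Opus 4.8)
The plan is to obtain this as an immediate specialization of \Cref{pr:formula complementary angle bases}. First I would take the two given bases $(v_1,\ldots,v_p)$ of $V$ and $(w_1,\ldots,w_q)$ of $W$ to be \emph{orthonormal}. Then the Gram matrices collapse to identities: $\AA=\big(\inner{v_i,v_j}\big)=\mathds{1}_{p\times p}$ and $\BB=\big(\inner{w_i,w_j}\big)=\mathds{1}_{q\times q}$, so that $\det\AA=\det\BB=1$ and $\AA^{-1}=\mathds{1}_{p\times p}$, $\BB^{-1}=\mathds{1}_{q\times q}$.

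The one step needing a word of justification is the identification $\CC=\mathbf{P}$. Since $(w_1,\ldots,w_q)$ is orthonormal, the orthogonal projection sends $v_j\mapsto P_W v_j=\sum_{i=1}^q \inner{w_i,v_j}\,w_i$, so the matrix of $P_W:V\to W$ in these bases has $(i,j)$ entry $\inner{w_i,v_j}$, which is precisely the $(i,j)$ entry of $\CC=\big(\inner{w_i,v_j}\big)$. Hence $\mathbf{P}=\CC$ as $q\times p$ matrices.

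Finally I would substitute $\AA=\mathds{1}_{p\times p}$, $\BB=\mathds{1}_{q\times q}$ and $\CC=\mathbf{P}$ into the two fractions of \Cref{pr:formula complementary angle bases}: the denominators become $1$, while the numerators become $\det(\mathds{1}_{p\times p}-\bar{\mathbf{P}}^T\mathbf{P})$ and $\det(\mathds{1}_{q\times q}-\mathbf{P}\bar{\mathbf{P}}^T)$ respectively, yielding the claim (their equality being inherited from that proposition, ultimately Schur's identity). There is no real obstacle here: all the content sits in \Cref{pr:formula complementary angle bases}, and the only subtlety worth flagging is that it is precisely orthonormality of the bases that forces the Gram matrices to become identities and makes $\CC$ literally the projection matrix $\mathbf{P}$.
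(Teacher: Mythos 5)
Your proposal is correct and is exactly the argument the paper intends: the corollary is stated without proof precisely because it is the specialization of \Cref{pr:formula complementary angle bases} to orthonormal bases, where $\AA=\mathds{1}_{p\times p}$, $\BB=\mathds{1}_{q\times q}$ and $\CC$ becomes the projection matrix $\mathbf{P}$. Your explicit check that $\CC=\mathbf{P}$ (via $P_W v_j=\sum_i \inner{w_i,v_j}w_i$, using the paper's convention of conjugate linearity in the left entry) is the right detail to flag, and nothing more is needed.
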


If $V\pperp W$ and $V\cap W\neq\{0\}$, no more information can be extracted from $\Theta_{V,W}$ or $\Upsilon_{V,W}$.
But we can use a third angle, based on $\Theta_{V^\perp,W}$.

This angle is linked to a product of Grassmann-Cayley algebra \cite{Browne2012,Mandolesi_Contractions}.
A choice of a unit $\Omega \in \bigwedge^n X$ (an \emph{orientation} of $X$) induces an isometry (conjugate-linear, if $\F=\C$) $*:\bigwedge^p X \rightarrow \bigwedge^{n-p} X$ given by $A^* = A \lcontr \Omega$, with $[A^*] = [A]^\perp$ and $\|A^*\|=\|A\|$ (if $\F=\R$, $*$ is the usual Hodge star).
It gives a \emph{regressive product} $\vee$ defined by $(A\vee B)^* = A^* \wedge B^*$.
If $A\in\bigwedge^p X$ and $B\in\bigwedge^q X$ then $A\vee B \in\bigwedge^{p+q-n} X$ and $A \vee B = (-1)^{(n-p)(n-q)} B\vee A$.
For an orthonormal basis $(v_1,\ldots,v_n)$, orientation $v_{1\cdots n}$ and $\ii,\jj\in\II^n$,
if $\ii\cup\jj = 1\cdots n$ then 
$v_{\ii} \vee v_{\jj} = \epsilon_{\jj'\ii'} v_{\ii\cap\jj}$,
otherwise $v_{\ii} \vee v_{\jj} = 0$.
For blades, if $[A]+[B] = X$ then $[A\vee B] = [A]\cap [B]$, otherwise $A \vee B = 0$.

\begin{definition}\label{df:Psi}
	$\Psi_{V,W}=\frac{\pi}{2}-\Theta_{V^\perp,W}$ is the \emph{supplementation angle} of $V,W\in G(X)$.
\end{definition}

By \ref{it:Psi 0 pi2} below, $\Psi_{V,W} = 0$ unless $V$ and $W$ are \emph{supplementary} (we use the term for $V+W=X$, not $V\oplus W=X$), 
	\SELF{`complementary' means $V\oplus W=X$}
and is $\frac\pi2$ when each subspace contains the orthogonal complement of the other.

\begin{proposition}\label{pr:Psi}
	Let $V\in G_p(X)$ and $W\in G_q(X)$ be represented by blades $A$ and $B$, respectively, and $n = \dim X$.
	\begin{enumerate}[i)]
		\item $\Psi_{V,W} = 0 \Leftrightarrow V+W\neq X$, and $\Psi_{V,W} = \frac{\pi}{2} \Leftrightarrow V^\perp \subset W$.\label{it:Psi 0 pi2}\SELF{$\Leftrightarrow W^\perp \subset V$ \\ $V^\perp \pperp W$}
		\item $\sin \Psi_{V,W} = \frac{\|P_{W} (A^*)\|}{\|A\|}$. \label{it:Psi arcsin}
		\item $\Psi_{V,W} = \theta_{\bigwedge^{n-p} (V^\perp), (\bigwedge^{n-p} W)^\perp}$. \label{it:Psi angle exterior algebra}
		\item $\Psi_{V,W} = \Upsilon_{V^\perp,W^\perp}$. \label{it:Psi Upsilon}
		\item $\Psi_{V,W} = \Psi_{W,V}$. \label{it:Theta perp symmetric} 	
		\item $\|A\vee B\| = \|A\|\|B\|\sin\Psi_{[A],[B]}$. \label{it:regressive product}
		\item If $V,W\neq\{0\}$ have principal angles $\theta_1,\ldots,\theta_m$ for $m=\min\{p,q\}$, and $r = \dim(V \cap W)$, then \label{it:sin Psi}
		\begin{equation}\label{eq:Theta A perp sin}
			\sin\Psi_{V,W} = 
			\begin{cases}
				0  &\text{if } V+W\neq X, \\
				\prod_{i=r+1}^m \sin\theta_i &\text{if } V+W= X \text{ and } V,W\neq X, \\
				1  &\text{if } V=X \text{ or } W=X.
			\end{cases}
		\end{equation}
	\end{enumerate}
\end{proposition}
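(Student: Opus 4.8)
The plan is to exploit the duality between $\Psi$ and $\Upsilon$. Straight from the definitions, since $(W^\perp)^\perp=W$,
$\Psi_{V,W}=\tfrac{\pi}{2}-\Theta_{V^\perp,W}=\tfrac{\pi}{2}-\Theta_{V^\perp,(W^\perp)^\perp}=\Upsilon_{V^\perp,W^\perp}$,
which is \ref{it:Psi Upsilon}. This single identity is the linchpin: it transfers every property of the disjointness angle in \Cref{pr:Upsilon} to the supplementation angle, applied to $V^\perp$ and $W^\perp$. I would establish \ref{it:Psi Upsilon} first, then harvest the remaining parts from it together with the already-proved behaviour of $\Theta$.

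For \ref{it:Psi 0 pi2}--\ref{it:Psi angle exterior algebra} I would mirror the corresponding parts of \Cref{pr:Upsilon} through the definition $\Psi_{V,W}=\tfrac{\pi}{2}-\Theta_{V^\perp,W}$. Part \ref{it:Psi 0 pi2} follows from \Cref{pr:Theta}\ref{it:Theta 0 pi2} applied to $\Theta_{V^\perp,W}$: this angle equals $\tfrac{\pi}{2}$ exactly when $V^\perp\pperp W$, i.e.\ $W^\perp\cap V^\perp=(V+W)^\perp\neq\{0\}$, i.e.\ $V+W\neq X$; and it vanishes exactly when $V^\perp\subset W$. For \ref{it:Psi arcsin}, since $[A^*]=V^\perp$ and $\|A^*\|=\|A\|$, I get $\sin\Psi_{V,W}=\cos\Theta_{V^\perp,W}=\|P_W(A^*)\|/\|A^*\|=\|P_W(A^*)\|/\|A\|$. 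Part \ref{it:Psi angle exterior algebra} comes from \Cref{pr:angle external powers} (giving $\Theta_{V^\perp,W}=\theta_{\bigwedge^{n-p}(V^\perp),\bigwedge^{n-p}W}$) combined with the elementary complement identity $\tfrac{\pi}{2}-\theta_{\LL,\WW}=\theta_{\LL,\WW^\perp}$ for a line $\LL$, the same step used for \ref{it:Upsilon angle exterior algebra}.

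The symmetry \ref{it:Theta perp symmetric} is then immediate from \ref{it:Psi Upsilon} and the symmetry of $\Upsilon$ (\Cref{pr:Upsilon}\ref{it:symmetry complementary}). For the regressive-product formula \ref{it:regressive product} I would use that $*$ is an isometry with $(A\vee B)^*=A^*\wedge B^*$: then $\|A\vee B\|=\|A^*\wedge B^*\|=\|A^*\|\|B^*\|\sin\Upsilon_{[A^*],[B^*]}$ by \Cref{pr:Upsilon}\ref{it:exterior product}, and since $[A^*]=V^\perp$, $[B^*]=W^\perp$, $\|A^*\|=\|A\|$, $\|B^*\|=\|B\|$, this equals $\|A\|\|B\|\sin\Upsilon_{V^\perp,W^\perp}=\|A\|\|B\|\sin\Psi_{V,W}$ by \ref{it:Psi Upsilon}.

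The substantive part is \ref{it:sin Psi}, where $\sin\Psi_{V,W}=\sin\Upsilon_{V^\perp,W^\perp}$ by \ref{it:Psi Upsilon}. The first and third cases are quick: if $V+W\neq X$ then $\Psi_{V,W}=0$ by \ref{it:Psi 0 pi2}; if $V=X$ or $W=X$ then one complement is $\{0\}$ and the relevant $\Theta$ is $0$, so $\Psi_{V,W}=\tfrac{\pi}{2}$ and $\sin\Psi_{V,W}=1$. The middle case carries the real work. With $V+W=X$ and $V,W\neq X$ I apply \Cref{pr:Upsilon}\ref{it:prod sin} to the nonzero subspaces $V^\perp,W^\perp$, so $\sin\Upsilon_{V^\perp,W^\perp}$ is the product of the sines of all $\min\{n-p,n-q\}$ principal angles of $V^\perp$ and $W^\perp$. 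Two observations finish it: first, the number of vanishing principal angles of $V^\perp,W^\perp$ is $\dim(V^\perp\cap W^\perp)=\dim(V+W)^\perp=0$, so no factor is $\sin 0$; second, by \Cref{pr:thetas perps} these (necessarily nonzero) angles coincide with the nonzero principal angles $\theta_{r+1},\ldots,\theta_m$ of $V,W$, where $r=\dim(V\cap W)$. I expect the main obstacle to be the index bookkeeping, namely checking $\min\{n-p,n-q\}=m-r$: since $V+W=X$ forces $r=p+q-n$, one has $\min\{p,q\}-r=n-\max\{p,q\}=\min\{n-p,n-q\}$, so the counts match and $\sin\Upsilon_{V^\perp,W^\perp}=\prod_{i=r+1}^m\sin\theta_i$, as claimed.
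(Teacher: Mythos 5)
Your proposal is correct and follows essentially the same route as the paper: establish $\Psi_{V,W}=\Upsilon_{V^\perp,W^\perp}$ from the definitions, derive \emph{(i)}--\emph{(v)} from this and \Cref{pr:Theta}\ref{it:Theta 0 pi2}, get \emph{(vi)} via $\|A\vee B\|=\|A^*\wedge B^*\|$ and \Cref{pr:Upsilon}\ref{it:exterior product}, and prove the middle case of \emph{(vii)} by applying \Cref{pr:Upsilon}\ref{it:prod sin} to $V^\perp,W^\perp$, using $V^\perp\cap W^\perp=\{0\}$ and \Cref{pr:thetas perps}. Your explicit dimension count $\min\{n-p,n-q\}=m-r$ is a detail the paper leaves implicit, but it is the same argument.
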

\begin{proof}
	\emph{(\ref{it:Psi 0 pi2}--\ref{it:Theta perp symmetric})} Immediate.
		\OMIT{\ref{it:Psi 0 pi2}: \ref{pr:Theta}\ref{it:Theta 0 pi2} \\
			\ref{it:Psi angle exterior algebra}: \ref{it:Psi Upsilon}, \ref{pr:Upsilon}\ref{it:Upsilon angle exterior algebra} \\
			\ref{it:Theta perp symmetric}: \ref{it:Psi Upsilon}, \ref{pr:Upsilon}\ref{it:symmetry complementary}
			}
	\emph{(\ref{it:regressive product})} By \Cref{pr:Upsilon}\ref{it:exterior product},
	$\|A\vee B\| = \| A^*\wedge B^* \| = \|A\|\|B\|\sin\Upsilon_{[A]^\perp,[B]^\perp}$. 
		\OMIT{\ref{it:Psi Upsilon}}
	\emph{(\ref{it:sin Psi})} The first and last cases follow from \ref{it:Psi 0 pi2}.
	The second one follows from \ref{it:Psi Upsilon} and \Cref{pr:Upsilon}\ref{it:prod sin}, as
	$V^\perp \cap W^\perp = \{0\}$ 
		\OMIT{no null principal angle}
	and $V^\perp,W^\perp \neq \{0\}$ have principal angles $\theta_{r+1},\ldots,\theta_m$, 
		\OMIT{$r<m$, as $V\subset W$ or vice-versa gives other cases}
	by \Cref{pr:thetas perps}.
\end{proof}

By \ref{it:Psi 0 pi2}, $\Psi_{V,W}$ does not satisfy a triangle inequality (\eg take two planes in $\R^3$ and their intersection).
By \ref{it:Psi arcsin}, $\sin \Psi_{V,W}$ (squared, if $\F=\C$) is the projection factor from $V^\perp$ to $W$.
As \ref{it:sin Psi} only has sines of nonzero $\theta_i$'s, $\Psi_{V,W}$ still gives information when $V\cap W \neq \{0\}$, unlike $\Upsilon_{V,W}$. 
Note that $\Psi_{V,W} \neq \Psi_{V,P_W(V)}$ (\eg take a line and a plane in $\R^3$).

\begin{corollary}
	For $V\in G_p(X)$ and $W\in G_q(X)$, $\Psi_{V,W} = 0$ if $p+q<n$,
	$\Psi_{V,W} = \Upsilon_{V,W}$ if $p+q=n$, 
	and
	$\Upsilon_{V,W} = 0$ if $p+q>n$.
\end{corollary}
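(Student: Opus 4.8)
The plan is to handle the three regimes separately, reducing the two extreme cases to the already-established characterizations of when $\Psi$ and $\Upsilon$ vanish, and treating the balanced case $p+q=n$ through the product-of-sines formulas.

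First, for $p+q<n$ I would observe that $\dim(V+W)\leq p+q<n$, so $V+W\neq X$; by \Cref{pr:Psi}\ref{it:Psi 0 pi2} this gives $\Psi_{V,W}=0$. Dually, for $p+q>n$ the dimension formula $\dim(V\cap W)=p+q-\dim(V+W)\geq p+q-n>0$ shows $V\cap W\neq\{0\}$, so \Cref{pr:Upsilon}\ref{it:Upsilon 0 pi2} yields $\Upsilon_{V,W}=0$. Each of these is immediate once the relevant inequality on $\dim(V+W)$ or $\dim(V\cap W)$ is written down.

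The substantive case is $p+q=n$, where I would prove $\sin\Psi_{V,W}=\sin\Upsilon_{V,W}$ and conclude using that both angles lie in $[0,\frac\pi2]$. I would split on whether $V\cap W$ is trivial. If $V\cap W\neq\{0\}$, then $\dim(V+W)=p+q-\dim(V\cap W)<n$, so $V+W\neq X$ and $\Psi_{V,W}=0$ by \Cref{pr:Psi}\ref{it:Psi 0 pi2}, while simultaneously $\Upsilon_{V,W}=0$ by \Cref{pr:Upsilon}\ref{it:Upsilon 0 pi2}; both vanish. If instead $V\cap W=\{0\}$ (so the number $r$ of null principal angles is $0$) and $V,W\neq\{0\},X$, then $\dim(V+W)=n$, i.e. $V+W=X$, and comparing \Cref{pr:Psi}\ref{it:sin Psi}, which gives $\sin\Psi_{V,W}=\prod_{i=r+1}^{m}\sin\theta_i=\prod_{i=1}^{m}\sin\theta_i$, with \Cref{pr:Upsilon}\ref{it:prod sin}, which gives $\sin\Upsilon_{V,W}=\prod_{i=1}^{m}\sin\theta_i$, shows the two sines agree. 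Equivalently, one may route this through \Cref{pr:Psi}\ref{it:Psi Upsilon} together with \Cref{pr:thetas perps}: when $p+q=n$ the pair $V^\perp,W^\perp$ has the same principal angles as $V,W$ — the nonzero ones by \Cref{pr:thetas perps}, and the same count of zero ones because $\dim(V^\perp\cap W^\perp)=\dim(V+W)^\perp=\dim(V\cap W)$ — so $\Upsilon_{V^\perp,W^\perp}=\Upsilon_{V,W}$, giving $\Psi_{V,W}=\Upsilon_{V,W}$.

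The main obstacle is the bookkeeping of principal angles in the balanced case: the formula for $\sin\Psi$ discards the $r$ null angles while $\sin\Upsilon$ keeps them, so equality of the two sines really requires $r=0$, which is exactly what $V+W=X$ forces when $p+q=n$; the $r>0$ subcase must instead be routed through the vanishing characterizations. I would finish by disposing of the degenerate endpoints $V=\{0\}$ or $X$ (equivalently $W=X$ or $\{0\}$): there $p+q=n$ collapses to $\{V,W\}=\{\{0\},X\}$, and a direct evaluation from the definitions of $\Psi$ and $\Upsilon$ gives $\frac\pi2$ for both, consistent with the claimed identity.
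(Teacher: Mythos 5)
Your proposal is correct and follows essentially the same route as the paper: the extreme cases reduce immediately to the vanishing characterizations $\Psi_{V,W}=0\Leftrightarrow V+W\neq X$ and $\Upsilon_{V,W}=0\Leftrightarrow V\cap W\neq\{0\}$, and the balanced case $p+q=n$ uses exactly the paper's observation that $V+W\neq X\Leftrightarrow V\cap W\neq\{0\}$ there, the product-of-sines formulas with $r=0$, and a separate check of the degenerate pair $\{0\},X$. Your aside via $\Psi_{V,W}=\Upsilon_{V^\perp,W^\perp}$ and \Cref{pr:thetas perps} is a valid alternative, but the core argument coincides with the paper's.
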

\begin{proof}
	Let $p+q=n$.
	Then $V+W\neq X \Leftrightarrow V\cap W \neq \{0\}$, in which case both angles are $0$.
	If a subspace is $X$, the other is $\{0\}$, and both angles are $\frac\pi2$.
	Otherwise, we have the second case in \eqref{eq:Theta A perp sin} with $r=0$.
\end{proof}

\begin{corollary}
	$\max\{\Upsilon_{V,W},\Psi_{V,W}\} \leq \theta_i \leq \Theta_{V,W}$ for any nonzero principal angle $\theta_i$.
\end{corollary}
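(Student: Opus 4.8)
The plan is to compare each of $\Theta_{V,W}$, $\Upsilon_{V,W}$, $\Psi_{V,W}$ with the single angle $\theta_i$ through the explicit product formulas already established, exploiting that $\cos$ is decreasing and $\sin$ is increasing on $[0,\frac\pi2]$, together with the elementary fact that a product of numbers in $[0,1]$ is at most any one of its factors. I may assume $V,W\neq\{0\}$, since otherwise there are no principal angles and the statement is vacuous. Write $m=\min\{p,q\}$ and $\theta_1\leq\cdots\leq\theta_m$ for the principal angles, recalling that the number of vanishing ones equals $\dim(V\cap W)=:r$, so that $\theta_i$ is nonzero exactly when $i>r$.

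For the upper bound $\theta_i\leq\Theta_{V,W}$ I would split on dimensions. If $p>q$ then $\Theta_{V,W}=\frac\pi2$ by \Cref{pr:Theta prod cos} and the bound is immediate. If $p\leq q$ then $m=p$ and $\cos\Theta_{V,W}=\prod_{j=1}^p\cos\theta_j$; as each factor lies in $[0,1]$ and $\theta_i$ (with $i\leq m=p$) is one of them, the product is at most $\cos\theta_i$, so decreasingness of $\cos$ gives $\theta_i\leq\Theta_{V,W}$. The bound $\Upsilon_{V,W}\leq\theta_i$ is entirely parallel: by \Cref{pr:Upsilon}\ref{it:prod sin}, $\sin\Upsilon_{V,W}=\prod_{j=1}^m\sin\theta_j\leq\sin\theta_i$, since $\theta_i$ is one of the factors and all factors lie in $[0,1]$; increasingness of $\sin$ finishes it. Both of these hold for every index, whether or not $\theta_i$ vanishes.

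The remaining bound $\Psi_{V,W}\leq\theta_i$ is where the nonzero hypothesis becomes essential, and disentangling the three-case structure of \Cref{pr:Psi}\ref{it:sin Psi} is the main obstacle. If $V+W\neq X$ then $\Psi_{V,W}=0$ and the bound is trivial. If $V=X$ or $W=X$ then one subspace contains the other, so all principal angles vanish and there is no nonzero $\theta_i$ to consider; the claim is vacuous, which is precisely why the value $\sin\Psi_{V,W}=1$ arising there poses no threat. In the remaining case $V+W=X$ with $V,W\neq X$ we have $\sin\Psi_{V,W}=\prod_{j=r+1}^m\sin\theta_j$, a product taken over exactly the nonzero principal angles. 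For a nonzero $\theta_i$ the index satisfies $i>r$, so $\sin\theta_i$ appears among these factors; as the factors lie in $[0,1]$, the product is at most $\sin\theta_i$, and again $\sin$ increasing yields $\Psi_{V,W}\leq\theta_i$. Combining the three bounds gives $\max\{\Upsilon_{V,W},\Psi_{V,W}\}\leq\theta_i\leq\Theta_{V,W}$, as claimed.
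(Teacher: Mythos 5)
Your proof is correct and takes essentially the same route the paper intends: the corollary is stated there without proof precisely because it follows immediately from the product formulas of \Cref{pr:Theta prod cos}, \Cref{pr:Upsilon}\ref{it:prod sin} and \Cref{pr:Psi}\ref{it:sin Psi}, which is exactly the argument you give. Your explicit observation that the case $V=X$ or $W=X$ (where $\sin\Psi_{V,W}=1$) is vacuous because all principal angles then vanish is the one detail the paper leaves implicit, and you handle it correctly.
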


If there is no $\theta_i \neq 0$ we can have $\Theta_{V,W} < \Upsilon_{V,W}$ or $\Theta_{V,W} < \Psi_{V,W}$: \eg $\Theta_{\{0\},W} = \Theta_{V,X} = 0$ but $\Upsilon_{\{0\},W} = \Psi_{V,X} = \frac\pi2$.

\begin{example}
	In \Cref{ex:contraction}, the orientation $e_{1\cdots 5}$ gives
	$A\vee B = 0$,
	$A\vee C = 2$ and 
	$C\vee D = 2u_3$, so 
		\OMIT{\ref{pr:Psi}\ref{it:regressive product}}
	$\Psi_{[A],[B]} = 0$,
	$\Psi_{[A],[C]} = \sin^{-1}\frac{\sqrt{2}}{3} \cong 28.1^\circ$ and
	$\Psi_{[C],[D]} = \sin^{-1}\frac{\sqrt{2}}{2} = 45^\circ$.
	With \Cref{ex:Upsilon via wedge}, this shows all nonzero principal angles of $[A]$ and $[B]$ are in $[43.4^\circ, 80.4^\circ]$, those of $[A]$ and $[C]$ are in $[28.1^\circ,76.4^\circ]$, and those of $[C]$ and $[D]$ are in $[45^\circ,90^\circ]$.
\end{example}

\begin{proposition}
	Given a basis $(v_1,\ldots,v_p)$ of $V$ and orthonormal bases $(w_1,\ldots,w_q)$ of $W$ and $\beta = (u_1,\ldots,u_n)$ of $X$, with $p+q\geq n$, let 
	$\AA_{p \times p} =\big(\inner{v_i,v_j}\big)$
	and for $\ii = (i_1,\ldots,i_{n-p})\in \II^q_{n-p}$ let $(\mathbf{M}_\ii)_{n \times n} =\big(v_1 \,\cdots\, v_p \ w_{i_1} \,\cdots\, w_{i_{n-p}} \big)$, with the vectors decomposed in $\beta$ as columns. Then
	$\sin^2 \Psi_{V,W} = \frac{1}{\det \AA} \sum_{\ii \in \II^q_{n-p}} |\det\mathbf{M}_\ii|^2$.
\end{proposition}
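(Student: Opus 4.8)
The plan is to realize $\sin^2\Psi_{V,W}$ as a normalized squared norm of a regressive product and then compute that norm in coordinates. Set $A=v_1\wedge\cdots\wedge v_p$ and $B=w_1\wedge\cdots\wedge w_q$, so $A$ represents $V$, $B$ represents $W$, and $\|A\|^2=\det\AA$ (the Gram determinant), while $\|B\|=1$ since the $w_i$'s are orthonormal. By \Cref{pr:Psi}\ref{it:regressive product}, $\sin^2\Psi_{V,W}=\|A\vee B\|^2/\det\AA$, so it suffices to prove $\|A\vee B\|^2=\sum_{\ii\in\II^q_{n-p}}|\det\mathbf{M}_\ii|^2$. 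First I would record what $\det\mathbf{M}_\ii$ means: with orientation $\Omega=u_{1\cdots n}$, the columns of $\mathbf{M}_\ii$ are the $\beta$-coordinates of $v_1,\ldots,v_p,w_{i_1},\ldots,w_{i_{n-p}}$, so that $A\wedge w_\ii=\det(\mathbf{M}_\ii)\,\Omega$, where $w_\ii=w_{i_1}\wedge\cdots\wedge w_{i_{n-p}}$.

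Next I would compute $\|A\vee B\|^2$ by expanding the meet in an orthonormal basis. Complete $(w_1,\ldots,w_q)$ to an orthonormal basis $(w_1,\ldots,w_n)$ of $X$ and use orientation $w_{1\cdots n}$ (which agrees with $\Omega$ up to a unit-modulus factor, irrelevant for norms). Since $[A\vee B]=V\cap W\subset W$ (or $A\vee B=0$), the blade $A\vee B$ lies in $\bigwedge^{p+q-n}W$, whose orthonormal basis is $\{w_\kk\}_{\kk\in\II^q_{p+q-n}}$; hence $\|A\vee B\|^2=\sum_\kk|\inner{w_\kk,A\vee B}|^2$. Writing $A=\sum_{\mm\in\II^n_p}\inner{w_\mm,A}\,w_\mm$ and applying the coordinate formula for $\vee$, the term $w_\mm\vee w_{1\cdots q}$ is nonzero only when $\mm\supset\{q+1,\ldots,n\}$, i.e. $\mm=\kk\cup\{q+1,\ldots,n\}$ with $\kk=\mm\cap\{1,\ldots,q\}\in\II^q_{p+q-n}$, and then it equals $\pm w_\kk$. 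Thus $\inner{w_\kk,A\vee B}=\pm\,\inner{w_{\kk\cup(q+1\cdots n)},A}$.

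Finally I would identify each such coefficient with a $\det\mathbf{M}_\ii$. Let $\ii$ be the complement of $\kk$ in $\{1,\ldots,q\}$, so $\ii\in\II^q_{n-p}$ and $\kk\mapsto\ii$ is a bijection onto $\II^q_{n-p}$. Expanding $A\wedge w_\ii$ in the $w$-basis, the only top-degree contribution comes from $\mm$ equal to the complement of $\ii$ in $\{1,\ldots,n\}$, which is exactly $\kk\cup\{q+1,\ldots,n\}$; hence $A\wedge w_\ii=\pm\,\inner{w_{\kk\cup(q+1\cdots n)},A}\,w_{1\cdots n}$. Comparing with $A\wedge w_\ii=\det(\mathbf{M}_\ii)\,u_{1\cdots n}$ gives $|\inner{w_{\kk\cup(q+1\cdots n)},A}|=|\det\mathbf{M}_\ii|$. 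Summing over $\kk$ (equivalently over $\ii\in\II^q_{n-p}$) yields $\|A\vee B\|^2=\sum_\ii|\det\mathbf{M}_\ii|^2$, and dividing by $\det\AA$ finishes the proof.

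The main obstacle is purely the index and sign bookkeeping: tracking the permutation signs in the coordinate $\vee$-formula and in the wedge products $w_\mm\wedge w_\ii$, and the unit-modulus factor relating $u_{1\cdots n}$ and $w_{1\cdots n}$. What makes the argument clean is that the target identity involves only $|\det\mathbf{M}_\ii|^2$, so every such phase has modulus one and drops out; the only point requiring care is to use the bijection between $\II^q_{p+q-n}$ and $\II^q_{n-p}$, $\kk\leftrightarrow\ii=\kk'$, consistently throughout.
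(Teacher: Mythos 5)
Your proof is correct, but it takes a different route from the paper's. You start from \Cref{pr:Psi}\ref{it:regressive product}, writing $\sin^2\Psi_{V,W} = \|A\vee B\|^2/\det\AA$ for the unit blade $B = w_1\wedge\cdots\wedge w_q$, and then compute the meet in coordinates after completing $(w_1,\ldots,w_q)$ to an orthonormal basis of $X$; this forces the index bijection $\kk\leftrightarrow\ii$ between $\II^q_{p+q-n}$ and $\II^q_{n-p}$ and the check that changing the orientation only rescales $\vee$ by a unimodular factor. The paper instead starts from \Cref{pr:Psi}\ref{it:Psi arcsin}, $\sin\Psi_{V,W} = \|P_W(A^*)\|/\|A\|$, expands $P_W(A^*)$ directly in the orthonormal basis $\{w_\ii\}_{\ii\in\II^q_{n-p}}$ of $\bigwedge^{n-p}W$, and converts each coefficient by the adjoint property of the contraction: $|\inner{w_\ii, A\lcontr u_{1\cdots n}}| = |\inner{u_{1\cdots n}, A\wedge w_\ii}| = |\det\mathbf{M}_\ii|$. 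That single identity replaces your basis completion, the coordinate formula for $\vee$, and the complementation bookkeeping, which is why the paper's proof is three lines. The two starting points are in fact Hodge-dual to one another: with $B = w_{1\cdots q}$ one has $\|A\vee B\| = \|A^*\wedge B^*\| = \|P_W(A^*)\|$, so your argument is essentially the mirror image of the paper's under $*$. What your route buys is the reading of the formula as a statement about the meet $A\vee B$ (and hence about $V\cap W$); its cost is the extra index and sign tracking, which, as you correctly note, is harmless because only moduli enter the final identity.
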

\begin{proof}
	Let $A=v_1\wedge\cdots \wedge v_p$, and assume $\|A\|^2 = \det \AA = 1$ for simplicity.
	By \Cref{pr:Psi}\ref{it:Psi arcsin},
	$\sin^2 \Psi_{V,W} = \|P_{W} (A^*)\|^2 = \sum_{\ii\in\I_{n-p}^q} |\inner{w_\ii,A^*}|^2 = \sum\limits_{\ii\in\I_{n-p}^q} |\inner{A\lcontr u_{1\cdots n},w_\ii}|^2 = \sum\limits_{\ii\in\I_{n-p}^q} |\inner{u_{1\cdots n},A\wedge w_\ii}|^2 = \sum\limits_{\ii \in \II^q_{n-p}} |\det\mathbf{M}_\ii|^2$.
\end{proof}

\begin{example}
	In \Cref{ex:formula distinct dim}, $p+q<n$, so $\Psi_{V,W} = 0$.
	In \Cref{ex:formula bases}, 
	$\AA=\begin{psmallmatrix}
		2 & -1 \\ -1 & 2
	\end{psmallmatrix}$,
	$\mathbf{M}_1=\begin{psmallmatrix}
		1 & 0 & 1 \\ -\xi & \xi & 0 \\ 0 & -\xi^2 & 0
	\end{psmallmatrix}$
	and
	$\mathbf{M}_2=\begin{psmallmatrix}
		1 & 0 & 0 \\ -\xi & \xi & \xi \\ 0 & -\xi^2 & 0
	\end{psmallmatrix}$
	give $\Psi_{V,W} = \sin^{-1}\sqrt{\frac{2}{3}} \cong 54.7^\circ$ ($= \Theta_{V,W}$ as the only $\theta_i \neq 0$  has this value).
\end{example}

\begin{example}
	In $\R^4$, let $V=[v_{12}]$ and $W=[w_{12}]$ for $v_1=(1,-1,0,1)$, $v_2=(0,1,1,-1)$, $w_1=(1,0,0,0)$ and $w_2=(0,0,1,0)$.
	With
	$\mathbf{M}_{12}=\begin{psmallmatrix}
		1 & 0 & 1 & 0 \\ -1 & 1 & 0 & 0 \\ 0 & 1 & 0 & 1 \\ 1 & -1 & 0 & 0
	\end{psmallmatrix}$
	we obtain $\Psi_{V,W} = 0$, so $V+W \neq \R^4$.
\end{example}

\section{Other asymmetric metrics}\label{sc:Other asymmetric metrics}

With $G_p(\F^n) \subset G_p(\F^{n+1})$ 
induced by the canonical inclusion $\F^n \subset \F^{n+1}$, 
$G_p^\infty = \bigcup_{n\in\N} G_p(\F^n)$ is the \emph{infinite Grassmannian} of all $p$-subspaces in all $\F^n$'s, and $G^\infty = \bigcup_{n\in\N} G(\F^n) = \bigcup_{p\in\N} G_p^\infty$ is the \emph{infinite full Grassmannian} of all subspaces in all $\F^n$'s (`doubly infinite Grassmannian' in \cite{Ye2016}).

A method in \cite{Ye2016} extends metrics $d_p$ from $G_p^\infty$ to $G^\infty$:
for $V\in G_p^\infty$ and $W\in G_q^\infty$ with $p\leq q$,
it shows $\delta = \min\{d_p(V,U):U\in G_p(W)\} = \min\{d_q(W,Y):Y\in G_q^\infty, Y\supset V\}$,
	\SELF{as in \Cref{pr:extrema}, but we do not require $p\leq q$, and our sets mix different dimensions}
then turns it into a metric $d = \max\{d_q(W,Y):Y\in G_q^\infty, Y\supset V\}$
via an ad hoc inclusion of principal angles $\theta_{p+1} = \cdots = \theta_q = \frac\pi2$.
	\SELF{if a $q$-subspace can contain $V$ and $q-p$ extra dimensions in $W^\perp$, what explains the use of $G_q^\infty$}
One can skip $\delta$ and include these angles in $d_p$, as both have the same formula.
	\SELF{the minimal distances occur at $W_p$ and $V\oplus W_\perp$, which have with $V$ the same nonzero $\theta_i$'s as $W$} 
Most metrics obtained have a fixed value if $p\neq q$. 
From $d_{pF}$ the method gives $d_s$, and from $d_g$ and $d_{cF}$
	\CITE{their Grassmann and Procrustes metrics}
new metrics with a nonzero minimum value for $p\neq q$.

We will prove the following result, giving a simpler way to extend metrics on $G_p^\infty$ to asymmetric metrics on $G^\infty$ (which restrict to $G(X)$).

\begin{theorem}\label{pr:asym metrics}
	Let $d_p:G_p^\infty \times G_p^\infty \rightarrow [0,\infty]$ for $p = 0,1,2,\ldots$ be metrics such that, for $p \neq 0$:
	\begin{enumerate}[I.]
		\item $d_p(V,W) = f_p(\theta_1,\ldots,\theta_p)$ for a nondecreasing function $f_p$ of the principal angles $\theta_1,\ldots,\theta_p$ of $V,W\in G_p^\infty$. \label{it:dp fp}
		
		\item $f_q(0,\ldots,0,\theta_1,\ldots,\theta_p) = f_p(\theta_1,\ldots,\theta_p)$ for $q>p$ and any $\theta_1,\ldots,\theta_p$.\label{it:fr fp}
	\end{enumerate}	
	Also, let $\diam G_p^\infty = \sup\{d_p(V,W):V,W\in G_p^\infty\}$
		\SELF{can be $\infty$ for a metric non-equivalent to  \Cref{tab:metrics same dim}}
	and $\inf_p$ be the infimum taken in $[0,\diam G_p^\infty]$.
	An asymmetric metric $d:G^\infty \times G^\infty \rightarrow [0,\infty]$ is given by $d(V,W) = \inf_p\{d_p(V,W'):W'\in G_p(W)\}$ with $p=\dim V$.
\end{theorem}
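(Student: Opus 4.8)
The plan is to first reduce $d$ to an explicit formula and then verify the two axioms of \Cref{df:asymmetric metric}. Write $p=\dim V$, $q=\dim W$. If $p>q$ then $G_p(W)=\emptyset$, so $d(V,W)=\inf_p\emptyset=\diam G_p^\infty$ (the top of $[0,\diam G_p^\infty]$). If $p\leq q$, I claim the infimum is attained at the projective subspace $W_P$ of \Cref{df:PO} and equals $f_p$ of the principal angles $\theta_1,\ldots,\theta_p$ of $V,W$: any $W'\in G_p(W)$ satisfies $W'\subset W$, so by the monotonicity of principal angles under restriction of the second subspace (the minimax characterization behind the recursive description after \Cref{df:principal}) the principal angles of $V,W'$ dominate $\theta_1,\ldots,\theta_p$ componentwise; since $f_p$ is nondecreasing (hypothesis~\ref{it:dp fp}), $d_p(V,W')\geq f_p(\theta_1,\ldots,\theta_p)$, while $\dim W_P=p$ and \Cref{pr:thetas WP Wperp} give $d_p(V,W_P)=f_p(\theta_1,\ldots,\theta_p)$. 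The same padding idea yields two facts I reuse: (a) $\diam G_p^\infty\leq\diam G_r^\infty$ whenever $p\leq r$, since padding any achievable $p$-tuple with $r-p$ zeros is achievable in dimension $r$ and leaves $f$ unchanged by hypothesis~\ref{it:fr fp}; and (b) for $p\leq r$, decomposing $V=V_P\oplus V_\perp$ \wrt $U$ as in \Cref{df:PO}, the subspace $Z:=U\oplus V_\perp$ has $\dim Z=r$, contains $U$, and by \Cref{pr:thetas WP Wperp} shares with $V$ the principal angles $\phi_1,\ldots,\phi_p$ of $U,V$ padded with $r-p$ zeros, so hypothesis~\ref{it:fr fp} gives $d_r(Z,V)=f_r(0,\ldots,0,\phi_1,\ldots,\phi_p)=f_p(\phi_1,\ldots,\phi_p)=d(U,V)$.

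For the separation condition, the formula shows $d(V,W)=0$ forces $p\leq q$ (otherwise $d=\diam G_p^\infty>0$ for $p\geq1$, and $V\not\subset W$), and then $f_p(\theta_1,\ldots,\theta_p)=0$. As $d_p$ is a genuine metric, $f_p$ vanishes only on the zero tuple, so all $\theta_i=0$, i.e.\ $\dim(V\cap W)=p$, which means $V\subset W$; the converse is immediate. Hence $d(V,W)=0\Leftrightarrow V\subset W$, and therefore $d(V,W)=d(W,V)=0\Leftrightarrow V\subset W$ and $W\subset V\Leftrightarrow V=W$, giving \Cref{df:asymmetric metric}\ref{it:d=0 x=y}.

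For the oriented triangle inequality $d(U,W)\leq d(U,V)+d(V,W)$, set $p=\dim U$, $r=\dim V$, $q=\dim W$; the degenerate cases collapse via fact (a). If $p>q$ the left side is $\diam G_p^\infty$, and either $p>r$ (so $d(U,V)=\diam G_p^\infty$) or $p\leq r$ forcing $r>q$ (so $d(V,W)=\diam G_r^\infty\geq\diam G_p^\infty$), and the right side dominates. If $p\leq q$ but $p>r$, then $d(U,V)=\diam G_p^\infty\geq d(U,W)$; if $p\leq r$ but $r>q$, then $d(V,W)=\diam G_r^\infty\geq\diam G_p^\infty\geq d(U,W)$. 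This leaves the main case $p\leq r\leq q$, where all three distances are finite $f$-values.

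In the main case I would pass through dimension $r$. Take $Z\supset U$ of dimension $r$ with $d_r(Z,V)=d(U,V)$ from fact (b), and $W^*=W_P\in G_r(W)$ with $d_r(V,W^*)=d(V,W)$ from the first paragraph; the honest triangle inequality for the metric $d_r$ gives $d_r(Z,W^*)\leq d_r(Z,V)+d_r(V,W^*)=d(U,V)+d(V,W)$. It remains to descend back to $U$. Since $W^*\subset W$ and $\dim U=p$, the infimum defining $d(U,\cdot)$ over the smaller set $G_p(W^*)\subset G_p(W)$ yields $d(U,W)\leq d(U,W^*)=f_p(\alpha_1,\ldots,\alpha_p)$, where $\alpha_i$ are the principal angles of $U$ and $W^*$. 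Now $U\subset Z$ with $p\leq r$; writing $\psi_1\leq\cdots\leq\psi_r$ for the principal angles of $Z,W^*$, the cosines $\cos\psi_i$ are the singular values of $P_{W^*}|_Z$ and the $\cos\alpha_i$ those of its restriction to $U$, so the Poincar\'e separation (Cauchy interlacing) inequality under restriction of the domain to a $p$-dimensional subspace gives $\alpha_i\leq\psi_{r-p+i}$ for $1\leq i\leq p$. Hence, by hypotheses~\ref{it:dp fp} and~\ref{it:fr fp},
\[
f_p(\alpha_1,\ldots,\alpha_p)\leq f_p(\psi_{r-p+1},\ldots,\psi_r)=f_r(0,\ldots,0,\psi_{r-p+1},\ldots,\psi_r)\leq f_r(\psi_1,\ldots,\psi_r)=d_r(Z,W^*),
\]
and chaining the inequalities gives $d(U,W)\leq d(U,V)+d(V,W)$. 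The main obstacle is exactly this interlacing step: the triangle inequality is only available in a fixed dimension $r$, so one must lift $U$ to an $r$-space $Z$ and drop $W$ to an $r$-space $W^*$, and then the $p$ principal angles of $U,W^*$ must be controlled by the \emph{top} $p$ angles of $Z,W^*$; it is the upper bound $\alpha_i\leq\psi_{r-p+i}$ (not the easy lower bound $\alpha_i\geq\psi_i$) that makes the nondecreasing $f$ cooperate with the padding identity~\ref{it:fr fp}.
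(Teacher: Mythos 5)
Your proof is correct, and its scaffolding --- the explicit formula for $d$, the monotonicity of $\diam G_p^\infty$, and principal-angle interlacing combined with hypotheses~I--II --- is the same as the paper's: your first paragraph reproduces \Cref{pr:d cases} and \Cref{pr:diameter nondecr}, and your interlacing bound $\alpha_i\leq\psi_{r-p+i}$ is exactly \Cref{pr:theta' theta}\ref{it:theta V' theta W}. Where you genuinely diverge is the choreography of the main case $\dim U\leq\dim V\leq\dim W$. The paper pushes everything \emph{down} to the smallest dimension: it replaces $V$ by its projective subspace $V_P$ \wrt $U$, replaces $W$ first by $W_P$ and then, via \Cref{pr:d subspace smaller}, by some $W'\in G_{\dim U}(W_P)$, and applies the honest triangle inequality for $d_{\dim U}$ to the triple $(U,V_P,W')$. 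You instead work at the \emph{middle} dimension: you lift $U$ to $Z=U\oplus V_\perp$ (your fact (b), a construction the paper's proof never uses, though it echoes \Cref{pr:extrema}), drop $W$ to $W^*=W_P$, apply the triangle inequality for $d_{\dim V}$ to $(Z,V,W^*)$, and then need the extra descent $d(U,W)\leq d(U,W^*)\leq d_{\dim V}(Z,W^*)$ --- which is precisely the content of \Cref{pr:d subspace smaller}, re-proved by you directly from interlacing and the padding identity. The trade-off: the paper's route uses only restrictions and a single auxiliary lemma, so it is slightly leaner; your route costs one extra construction and one extra inequality, but it makes explicit that the infimum defining $d(U,V)$ is also realized by \emph{enlarging} the first argument, a fact of independent interest that the paper records separately only for the asymmetric angle. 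Two trivial loose ends in your write-up, both of which the paper also dispatches parenthetically: the case $\dim U=0$ (where $f_p$ is unavailable, but $d(U,W)=0$ makes the inequality trivial), and the positivity $\diam G_p^\infty>0$ for $p\geq 1$ used in your separation argument (orthogonal subspaces exist in $G_p^\infty$ and $d_p$ is a metric).
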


Note that $d(V,W) \leq \diam G_p^\infty$ for $p=\dim V$.

Use of $\inf_p$ instead of $\min$ is crucial, 
as $G_p(W) = \emptyset$ for $p>\dim W$ and $\min \emptyset$ is not defined.
On the other hand, the infimum $\inf_S$ taken in an ordered set $S$ with greatest element $M$ satisfies $\inf_S \emptyset = M$ (by its definition as the greatest lower bound: any $s\in S$ is a lower bound of $\emptyset$, 
since $\emptyset$ has no element smaller than $s$ \cite[p.\,261]{Bajnok2020}).
This may seem like a technicality, but is an important property of the infimum,
and plays a central role in our method.

To prove the Theorem we will need some results.
The following are particular cases of \cite[Cor.\,3.1.3]{Horn1991}, which we state for convenience.

\begin{proposition}\label{pr:singular values}
	Let $\sigma_1 \geq \cdots \geq \sigma_p$ be the singular values of a $q\times p$ matrix $\PP$, with $p\leq q$.
		\SELF{pode tirar $p\leq q$, $k\leq q-p$ se definir $\sigma_i,\sigma_i'=0$ para $i$ além do max. No \cite{Horn1991} podia deletar rows e columns ao mesmo tempo}
	\begin{enumerate}[i)]
		\item If $\sigma_1' \geq \cdots \geq \sigma_p'$ are the singular values of a $(q-k) \times p$ matrix formed by deleting $k\leq q-p$ rows of $\PP$ then $\sigma_i' \leq \sigma_i$ for $1\leq i \leq p$.
		
		\item If $\sigma_1' \geq \cdots \geq \sigma_{p-k}'$ are the singular values of a $q \times (p-k)$ matrix formed by deleting $k$ columns of $\PP$ then $\sigma_i' \geq \sigma_{i+k}$ for $1\leq i \leq p-k$.\SELF{$\sigma_i \geq \sigma_i'$ also}
	\end{enumerate}
\end{proposition}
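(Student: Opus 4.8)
The plan is to invoke the Courant–Fischer min-max characterization of singular values. Writing $\PP:\F^p\to\F^q$ for the associated linear map, its $i$-th singular value satisfies
\[
	\sigma_i = \max_{\substack{S\subseteq\F^p\\ \dim S=i}}\ \min_{\substack{x\in S\\ \|x\|=1}} \|\PP x\|,
\]
since the $\sigma_i^2$ are the eigenvalues of the Hermitian operator $\PP^*\PP$, to which the spectral max-min formula applies (and taking square roots is monotone). First I would record this formula, then treat the two parts separately.

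For \emph{(i)}, the key point is pointwise: if $\PP'$ denotes $\PP$ with $k$ rows deleted, then for every $x$ the vector $\PP'x$ is a subvector of $\PP x$, so $\|\PP'x\|\leq\|\PP x\|$. Substituting this into the max-min formula, for each test subspace $S$ the inner minimum can only decrease, and hence so does the outer maximum; this gives $\sigma_i'\leq\sigma_i$ for all $i$. The hypothesis $k\leq q-p$ is exactly what keeps $q-k\geq p$, so that $\PP'$ still has a full set of $p$ singular values.

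For \emph{(ii)}, I would identify $\F^{p-k}$ with the coordinate subspace $W_0\subseteq\F^p$ on which the deleted coordinates vanish, so that the column-deleted matrix $\PP''$ is the restriction $\PP|_{W_0}$ and $\|\PP'' x\|=\|\PP x\|$ for $x\in W_0$. Let $T^*$ be an $(i+k)$-dimensional subspace attaining $\sigma_{i+k}$ in the max-min formula for $\PP$. A dimension count gives $\dim(T^*\cap W_0)\geq (i+k)+(p-k)-p=i$, so $T^*\cap W_0$ contains an $i$-dimensional subspace $S'$; every unit vector $x\in S'\subseteq T^*$ satisfies $\|\PP x\|\geq\sigma_{i+k}$, and since $S'\subseteq W_0$ is an admissible test subspace for $\PP''$, the max-min formula yields $\sigma_i'\geq\sigma_{i+k}$.

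The only real obstacle is the index bookkeeping in \emph{(ii)}: one must choose the max-min (rather than the dual min-max) form and verify the intersection dimension, which is precisely what forces the shifted comparison $\sigma_i'\geq\sigma_{i+k}$ rather than $\sigma_i'\geq\sigma_i$. Equivalently, both parts follow from Cauchy interlacing for Hermitian matrices applied to suitable principal submatrices of $\PP\PP^*$ and $\PP^*\PP$, after padding the spectra with zeros so the indices align.
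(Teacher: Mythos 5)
Your proof is correct. Note, though, that the paper does not actually prove this proposition: it is stated as a particular case of \cite[Cor.\,3.1.3]{Horn1991} (singular value interlacing for submatrices), and the proposition is included only for convenience of reference. So your contribution is a genuinely self-contained argument where the paper simply cites. Your route — the Courant--Fischer max-min characterization $\sigma_i = \max_{\dim S = i} \min_{x\in S,\,\|x\|=1} \|\PP x\|$, with the pointwise inequality $\|\PP' x\| \leq \|\PP x\|$ for row deletion and the dimension-count $\dim(T^*\cap W_0) \geq i$ for column deletion — is sound: the attainment of the maximum holds in finite dimension, the shift by $k$ in part \emph{(ii)} comes out exactly right from the intersection bound $(i+k)+(p-k)-p = i$, and the hypothesis $k \leq q-p$ is used exactly where you say, to keep $\PP'$ with a full set of $p$ singular values. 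The reference's own proof runs instead through Cauchy interlacing of the Hermitian matrices $\PP^*\PP$ and $\PP\PP^*$ under passage to principal submatrices, which is the alternative you sketch in your closing paragraph; the two are essentially equivalent, but your direct variational argument avoids the index padding needed to align the spectra of $\PP\PP^*$ (size $q$) and $\PP'\PP'^*$ (size $q-k$) with the $p$ singular values, which is precisely the bookkeeping the paper's marginal note flags as the delicate point when one drops the assumption $p\leq q$.
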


\begin{corollary}\label{pr:theta' theta}
	Let $\theta_1\leq\cdots\leq\theta_p$ be the principal angles of $V\in G_p(X)$ and $W\in G_q(X)$, with $p\leq q$.
	\begin{enumerate}[i)]
		\item If $\theta_1'\leq\cdots\leq\theta_p'$ are the principal angles of $V$ and $W' \in G_p(W)$ then $\theta_i' \geq \theta_{i}$ for $1\leq i \leq p$. \label{it:theta V theta W'}
		\item If $\theta_1'\leq\cdots\leq\theta_r'$ are the principal angles of $V' \in G_r(V)$ and $W$ then $\theta_i' \leq \theta_{i+p-r}$ for $1\leq i \leq r$.\label{it:theta V' theta W}\SELF{$\theta_i \leq \theta_i'$ also}
	\end{enumerate}
\end{corollary}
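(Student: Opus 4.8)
The plan is to reduce both parts of \Cref{pr:theta' theta} directly to \Cref{pr:singular values} by realizing the restricted projections as submatrices of the projection $V \rightarrow W$. Recall that in orthonormal bases the singular values of the orthogonal projection $V \rightarrow W$ are precisely $\cos\theta_1 \geq \cdots \geq \cos\theta_p$, and the restricted principal angles $\theta_i'$ are likewise the arccosines of the singular values of the corresponding restricted projection. Since $\cos$ is decreasing on $[0,\frac\pi2]$, each inequality between singular values flips into the reverse inequality between angles; this is exactly why deleting rows (smaller singular values) will produce $\theta_i' \geq \theta_i$ and deleting columns (interlaced singular values) will produce $\theta_i' \leq \theta_{i+p-r}$.

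For \ref{it:theta V theta W'}, I would fix an orthonormal basis of $W$ whose first $p$ vectors span $W'$, together with any orthonormal basis of $V$, so that $P_W \colon V \rightarrow W$ has a $q \times p$ matrix $\mathbf{P}$ with singular values $\cos\theta_i$. Because $W' \subset W$ gives $P_{W'} v = P_{W'} P_W v$ for all $v$ (the component of $v$ orthogonal to $W$ is also orthogonal to $W'$), in this adapted basis the matrix of $P_{W'}\colon V \rightarrow W'$ is obtained from $\mathbf{P}$ by deleting the last $q-p$ rows. Its singular values are the $\cos\theta_i'$, so \Cref{pr:singular values}(i) with $k = q-p$ gives $\cos\theta_i' \leq \cos\theta_i$, hence $\theta_i' \geq \theta_i$.

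For \ref{it:theta V' theta W}, I would instead fix an orthonormal basis of $V$ whose first $r$ vectors span $V'$. Here the projection $V' \rightarrow W$ is simply the restriction of $P_W$ to $V'$, and its matrix is obtained from the same $q \times p$ matrix $\mathbf{P}$ by deleting the last $p-r$ columns, giving a $q \times r$ matrix whose $r$ singular values are the $\cos\theta_i'$. Then \Cref{pr:singular values}(ii) with $k = p-r$ (so $p-k = r$) yields $\cos\theta_i' \geq \cos\theta_{i+p-r}$ for $1 \leq i \leq r$, hence $\theta_i' \leq \theta_{i+p-r}$.

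There is no substantial obstacle here once the matrix picture is set up; the work is entirely bookkeeping. The points requiring care are: verifying the identity $P_{W'} = P_{W'} P_W$ that legitimizes the row deletion in part \ref{it:theta V theta W'}; confirming that restricting the \emph{domain} corresponds to deleting \emph{columns} (not rows) in part \ref{it:theta V' theta W}; and keeping the direction of every inequality straight across the order-reversing $\cos$. The single spot most prone to error is matching the index shift $i \mapsto i+p-r$ in part \ref{it:theta V' theta W} to the $k$-step interlacing of \Cref{pr:singular values}(ii), so I would double-check that $k = p-r$ reproduces exactly the claimed bound.
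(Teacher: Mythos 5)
Your proof is correct and takes essentially the same approach as the paper: its proof also extends orthonormal bases of $V'$ and $W'$ to $V$ and $W$ and applies \Cref{pr:singular values} to the matrices of the projections $V\rightarrow W$, $V\rightarrow W'$ and $V'\rightarrow W$, which is precisely your row-deletion and column-deletion argument. The details you make explicit (the identity $P_{W'}=P_{W'}P_W$, the order reversal under $\cos$, and the index shift with $k=p-r$) are all handled correctly and match what the paper leaves implicit.
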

\begin{proof}
	Follows by extending orthonormal
		\SELF{só precisa pros $W$'s}
	bases of $V'$ and $W'$ to $V$ and $W$, and applying \Cref{pr:singular values} to matrices representing, in these bases, orthogonal projections $V\rightarrow W$, $V \rightarrow W'$ and $V' \rightarrow W$.
\end{proof}

\begin{proposition}\label{pr:d cases}
	Let $V\in G_p^\infty$, $W\in G_q^\infty$, and if $p,q\neq 0$ let $\theta_1,\ldots,\theta_p$ be the principal angles of $V$ and $W$, and $W_P$ be a projective subspace of $W$ \wrt $V$. With the notation of \Cref{pr:asym metrics},
		\OMIT{\Cref{df:PO}}
		\SELF{$d(V,W) \leq \diam G_p^\infty$ $\forall p,q$}
	\begin{equation}\label{eq:d cases}
		d(V,W) = \begin{cases}
			d_p(V,W_P) = f_p(\theta_1,\ldots,\theta_p) &\text{if } 0<p\leq q, \\
			\diam G_p^\infty &\text{otherwise}.
		\end{cases}
	\end{equation}
\end{proposition}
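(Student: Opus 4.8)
The plan is to evaluate the infimum defining $d(V,W)$ separately in the three regimes $0<p\le q$, $p>q$, and $p=0$, matching the two lines of \eqref{eq:d cases}.

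First I would clear the degenerate cases making up the ``otherwise'' line. If $p=0$ then $V=\{0\}$ and $G_0^\infty=\{\{0\}\}$ is a single point, so $d_0\equiv 0$ and $\diam G_0^\infty=0$; since $G_0(W)=\{\{0\}\}$, the infimum is $d_0(\{0\},\{0\})=0=\diam G_0^\infty$. If instead $p>q$, then $W$ is too small to contain a $p$-subspace, so $G_p(W)=\emptyset$ and the infimum, computed in $[0,\diam G_p^\infty]$, returns its greatest element $\diam G_p^\infty$ by the empty-infimum convention recalled after the statement of \Cref{pr:asym metrics}. This is the one genuinely delicate point of the argument, and it is exactly where using $\inf_p$ rather than $\min$ is essential.

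For the main regime $0<p\le q$, I would first show that $f_p(\theta_1,\dots,\theta_p)$ is a lower bound for the set $\{d_p(V,W'):W'\in G_p(W)\}$. By hypothesis \ref{it:dp fp}, $d_p(V,W')=f_p(\theta_1',\dots,\theta_p')$, where $\theta_1'\le\cdots\le\theta_p'$ are the principal angles of $V$ and $W'$; by \Cref{pr:theta' theta}\ref{it:theta V theta W'} these satisfy $\theta_i'\ge\theta_i$ for all $i$, and since $f_p$ is nondecreasing we obtain $d_p(V,W')\ge f_p(\theta_1,\dots,\theta_p)$. As each $d_p$-value lies in $[0,\diam G_p^\infty]$, this set is a nonempty subset of $[0,\diam G_p^\infty]$, so $\inf_p$ coincides with the ordinary infimum.

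It then remains to exhibit a minimizer, and the projective subspace $W_P$ of \Cref{df:PO} does this. Since $p\le q$ we have $\dim W_P=\min\{p,q\}=p$, so $W_P\in G_p(W)$, and by \Cref{pr:thetas WP Wperp} the principal angles of $V$ and $W_P$ are precisely $\theta_1,\dots,\theta_p$. Hence $d_p(V,W_P)=f_p(\theta_1,\dots,\theta_p)$ attains the lower bound from within the set, so the infimum equals this common value, giving $d(V,W)=d_p(V,W_P)=f_p(\theta_1,\dots,\theta_p)$ and proving the first line of \eqref{eq:d cases}. Apart from the empty-set convention handled above, the whole argument reduces to the monotonicity of $f_p$, the principal-angle comparison of \Cref{pr:theta' theta}, and the optimality of $W_P$.
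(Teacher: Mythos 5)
Your proof is correct and follows essentially the same route as the paper's: the same three-case split ($p=0$, $p>q$ via the empty-infimum convention, and $0<p\leq q$ via \Cref{pr:thetas WP Wperp} and \Cref{pr:theta' theta} showing $W_P$ attains the lower bound $f_p(\theta_1,\ldots,\theta_p)$). The only difference is that you spell out a few details the paper leaves implicit, such as $\dim W_P = p$ so that $W_P \in G_p(W)$, and the nonemptiness of the set in the main case.
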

\begin{proof}
	For $p=0$, $d(V,W) = 0 = \diam G^\infty_0$.
	For $p>q$, $G_p(W) = \emptyset$, so $d(V,W)  = \inf_p \emptyset = \diam G_p^\infty$.
	For $0<p\leq q$, Propositions \ref{pr:thetas WP Wperp} and \ref{pr:theta' theta}\ref{it:theta V theta W'} show
	the principal angles of $V$ and $W_P$ are the $\theta_i$'s, which bound from below those of $V$ and $W'\in G_p(W)$,
	so $d_p(V,W_P) \leq d_p(V,W')$ by \ref{it:dp fp}.
\end{proof}

\begin{lemma}\label{pr:d subspace smaller}
	Given $U,V\in G_p^\infty$ and $U' \in G_r(U)$, there is $V' \in G_r(V)$ with $d_r(U',V') \leq d_p(U,V)$.
\end{lemma}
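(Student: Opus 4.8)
The plan is to build $V'$ as the projective subspace of $V$ with respect to $U'$, to control the principal angles of $U'$ and $V'$ through the comparison in \Cref{pr:theta' theta}\ref{it:theta V' theta W}, and then to convert the resulting inequality between $f_r$ and $f_p$ using hypotheses \ref{it:dp fp} and \ref{it:fr fp}.

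First I would place $U$ and $V$ in a common $X=\F^n$ (possible since both lie in $G_p^\infty$) and dispose of the trivial case $r=0$, where $U'=V'=\{0\}$ and $d_0(U',V')=0\leq d_p(U,V)$. For $r\geq 1$, let $\theta_1\leq\cdots\leq\theta_p$ be the principal angles of $U$ and $V$, so $d_p(U,V)=f_p(\theta_1,\ldots,\theta_p)$ by hypothesis \ref{it:dp fp}. Applying \Cref{pr:theta' theta}\ref{it:theta V' theta W} with ambient $U$, fixed subspace $V$, and the $r$-dimensional $U'\subset U$, the principal angles $\alpha_1\leq\cdots\leq\alpha_r$ of $U'$ and $V$ satisfy $\alpha_i\leq\theta_{i+p-r}$ for $1\leq i\leq r$.

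I would then set $V'=V_P$, the projective subspace of $V$ with respect to $U'$ (\Cref{df:PO}), which lies in $G_r(V)$ since $\min\{r,p\}=r$. By \Cref{pr:thetas WP Wperp}, $U'$ and $V'$ share the principal angles $\alpha_1,\ldots,\alpha_r$ with $U'$ and $V$, so $d_r(U',V')=f_r(\alpha_1,\ldots,\alpha_r)$. Hypothesis \ref{it:fr fp} lets me pad with $p-r$ leading zeros, giving $f_r(\alpha_1,\ldots,\alpha_r)=f_p(0,\ldots,0,\alpha_1,\ldots,\alpha_r)$. The padded tuple is sorted increasingly, and its $i$-th entry is $0\leq\theta_i$ for $i\leq p-r$ and $\alpha_{i-(p-r)}\leq\theta_i$ for $i>p-r$ (the latter being exactly the bound $\alpha_j\leq\theta_{j+p-r}$ with $j=i-(p-r)$); hence it is dominated entrywise by $(\theta_1,\ldots,\theta_p)$. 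Since $f_p$ is nondecreasing by hypothesis \ref{it:dp fp}, this yields $d_r(U',V')\leq f_p(\theta_1,\ldots,\theta_p)=d_p(U,V)$.

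The crux is making the three ingredients interlock: the index shift $i+p-r$ in the angle comparison is precisely what forces the zero-padded tuple to be dominated, and this depends on choosing $V_P$ rather than $P_V(U')$ (which could drop dimension when $U'\pperp V$) so that the principal angles of $U'$ and $V'$ are exactly the $\alpha_i$. I expect the only delicate point to be verifying the entrywise domination of the two sorted $p$-tuples; once that is in place, monotonicity and hypothesis \ref{it:fr fp} close the argument immediately.
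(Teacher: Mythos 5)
Your proof is correct and follows essentially the same route as the paper's: both take $V'=V_P$, the projective subspace of $V$ \wrt $U'$, invoke \Cref{pr:thetas WP Wperp} to identify its principal angles with those of $U'$ and $V$, and combine \Cref{pr:theta' theta}\ref{it:theta V' theta W} with hypotheses \ref{it:dp fp} and \ref{it:fr fp} via the zero-padding argument. Your write-up merely makes explicit some details the paper leaves implicit (the $r=0$ case and the entrywise domination of the padded tuple), which is fine.
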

\begin{proof}
	Let $U$ and $V$ have principal angles $\theta_1,\ldots,\theta_p$, and $U'$ and $V$ have $\theta_1',\ldots,\theta_r'$.
	With \ref{it:dp fp} and \ref{it:fr fp},
	Propositions \ref{pr:thetas WP Wperp} and \ref{pr:theta' theta}\ref{it:theta V' theta W} give $d_p(U,V) = f_p(\theta_1,\ldots,\theta_p) \geq f_p(0,\ldots,0,\theta_1',\ldots,\theta_r') = f_r(\theta_1',\ldots,\theta_r') = d_r(U',V_P)$, for a projective subspace $V_P$ of $V$ \wrt $U'$.
\end{proof}

\begin{lemma}\label{pr:diameter nondecr}
	$\diam G_p^\infty$ is non-decreasing on $p$.
\end{lemma}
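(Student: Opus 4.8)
The plan is to prove the stronger, cleaner statement that $\diam G_p^\infty \leq \diam G_{p+1}^\infty$ for every $p\geq 0$, from which monotonicity follows at once. The strategy is to exhibit, for an \emph{arbitrary} pair $V,W\in G_p^\infty$, a pair in $G_{p+1}^\infty$ realizing exactly the same distance; taking suprema then gives the inequality. The case $p=0$ is handled separately and is immediate: $G_0^\infty=\{\{0\}\}$ is a single point, so $\diam G_0^\infty = d_0(\{0\},\{0\}) = 0$, which is a lower bound for $\diam G_1^\infty$.

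For $p\geq 1$, I would take $V,W\in G_p^\infty$, say $V,W\in G_p(\F^n)$ with principal angles $\theta_1\leq\cdots\leq\theta_p$, and use the canonical inclusion $\F^n\subset\F^{n+1}$ to append a unit vector $u$ with $u\perp \F^n$. Setting $V'=V\oplus\Span\{u\}$ and $W'=W\oplus\Span\{u\}$ produces two $(p+1)$-subspaces in $G_{p+1}(\F^{n+1})\subset G_{p+1}^\infty$. The key computation is the principal angles of $V'$ and $W'$. Since $u$ is orthogonal to $\F^n\supset V,W$, the orthogonal projection $V'\to W'$ splits as the identity on $\Span\{u\}$ and $P_W$ on $V$; its singular values are therefore $1$ together with $\cos\theta_1,\ldots,\cos\theta_p$, so the principal angles of $V'$ and $W'$ are precisely $0,\theta_1,\ldots,\theta_p$. (Equivalently, $u\in V'\cap W'$ contributes one null principal angle, and in its orthogonal complements within $V'$ and $W'$, namely $V$ and $W$, the recursive construction recovers exactly $\theta_1,\ldots,\theta_p$.)

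The conclusion then follows directly from the hypotheses of \Cref{pr:asym metrics}: by conditions \ref{it:dp fp} and \ref{it:fr fp} (the latter with $q=p+1$),
\[
 d_{p+1}(V',W') = f_{p+1}(0,\theta_1,\ldots,\theta_p) = f_p(\theta_1,\ldots,\theta_p) = d_p(V,W).
\]
Hence every value $d_p(V,W)$ with $V,W\in G_p^\infty$ is attained by a pair in $G_{p+1}^\infty$, giving $\diam G_p^\infty \leq \diam G_{p+1}^\infty$. The only point requiring care is the claim about the principal angles of $V'$ and $W'$, and this is exactly where the orthogonality of the appended direction $u$ to the ambient $\F^n$ is essential: it guarantees that the new shared direction adds precisely one null angle while leaving the remaining angles untouched, which is the very situation that condition \ref{it:fr fp} is designed to absorb. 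No genuine obstacle is expected beyond carefully justifying this angle computation.
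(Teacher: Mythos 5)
Your proof is correct, but it takes a genuinely different route from the paper's. The paper argues in one line by evaluating both diameters in closed form: since $G_p^\infty$ contains orthogonal pairs and $f_p$ is nondecreasing, $\diam G_p^\infty = f_p(\frac\pi2,\ldots,\frac\pi2)$, and then condition \ref{it:fr fp} together with the monotonicity of $f_q$ gives $f_p(\frac\pi2,\ldots,\frac\pi2) = f_q(0,\ldots,0,\frac\pi2,\ldots,\frac\pi2) \leq f_q(\frac\pi2,\ldots,\frac\pi2) = \diam G_q^\infty$. You instead realize every distance value at level $p$ at level $p+1$: for $V,W\in G_p(\F^n)$ you form $V'=V\oplus\Span\{u\}$ and $W'=W\oplus\Span\{u\}$ with $u\perp\F^n$, check that the principal angles of $V',W'$ are $0,\theta_1,\ldots,\theta_p$ (your computation is right: $u$ is a common direction orthogonal to $V+W$, so the projection $V'\to W'$ splits as the identity on $\Span\{u\}$ plus $P_W|_V$), and conclude $d_{p+1}(V',W')=f_{p+1}(0,\theta_1,\ldots,\theta_p)=f_p(\theta_1,\ldots,\theta_p)=d_p(V,W)$ before taking suprema. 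A noteworthy feature of your argument is that it never uses the \emph{nondecreasing} part of condition \ref{it:dp fp} --- only that $d_p$ is a function of the principal angles, plus condition \ref{it:fr fp} --- and it proves something slightly stronger, namely that the set of values of $d_p$ on $G_p^\infty\times G_p^\infty$ is contained in that of $d_{p+1}$, so the inequality of suprema holds even if they are not attained. What the paper's argument buys in exchange is brevity and the explicit formula $\diam G_p^\infty = f_p(\frac\pi2,\ldots,\frac\pi2)$, which is the quantity tabulated in \Cref{tab:asymmetric metrics} and used in the proof of \Cref{pr:asym metrics}.
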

\begin{proof}
	Any $G_p^\infty$ has orthogonal subspaces, so, for $0<p<q$, $\diam G_p^\infty = f_p(\frac\pi2,\ldots,\frac\pi2) = f_q(0,\ldots,0,\frac\pi2,\ldots,\frac\pi2) \leq f_q(\frac\pi2,\ldots,\frac\pi2) = \diam G_q^\infty$.
		\OMIT{\ref{it:dp fp}, \ref{it:fr fp}}
\end{proof}

The reason why we use $G_p^\infty$ is that $\diam G_p(\F^n)$ decreases for large $p$.
	\SELF{depending on $n$ and the metric. For $p=n$ it is $0$}
We can now prove the theorem:

\begin{proof}[Proof of \Cref{pr:asym metrics}]
	Let $U\in G_r^\infty$, $V\in G_p^\infty$, $W\in G_q^\infty$.
	By \eqref{eq:d cases}, if $p \neq 0$ and $d(V,W)=0$ then $d_p(V,W_P)=0$,
		\OMIT{$\diam G_p \neq 0$ for $p\neq 0$}
	so $V=W_P \subset W$.
	This gives \Cref{df:asymmetric metric}\ref{it:d=0 x=y}, and we prove $d(U,W) \leq d(U,V) + d(V,W)$. 
	If $r>p$, $d(U,W) \leq \diam G_r^\infty = d(U,V)$.
		\OMIT{\eqref{eq:d cases}}
	If $r\leq p$ and $p>q$, \Cref{pr:diameter nondecr} gives 
	$d(U,W) \leq \diam G_r^\infty \leq \diam G_p^\infty = d(V,W)$.
		\OMIT{\eqref{eq:d cases}}
	If $0 < r\leq p \leq q$, 
		\OMIT{$r=0$ is trivial}
	Propositions \ref{pr:d cases} and \ref{pr:d subspace smaller} give $V_P\in G_r(V)$, $W_P\in G_p(W)$ and $W'\in G_r(W_P)$ with $d(U,V) = d_r(U,V_P)$
	and $d(V,W) = d_p(V,W_P) \geq d_r(V_P,W')$,
	so $d(U,W) \leq d_r(U,W') \leq d_r(U,V_P) + d_r(V_P,W') \leq d(U,V) + d(V,W)$.
		\OMIT{$d=\inf d_r$ \\ $d_r$ metric \\ previous line}
\end{proof}

\begin{table}[]
	\centering
	\renewcommand{\arraystretch}{1}
	\begin{tabular}{ccc}
		\toprule
		Metric on $G_p^\infty$ & $\diam G_p^\infty$ & Asymmetric metric on $G^\infty$
		\\
		\cmidrule(lr){1-1} \cmidrule(lr){2-2}  \cmidrule(lr){3-3} 
		$d_g$ & $\frac\pi2 \sqrt{p}$ & $\sqrt{\sum_{i=1}^p \theta_i^2}$  if $p\leq q$, otherwise $\frac\pi2 \sqrt{p}$
		\\[3pt]
		$d_{cF}$ & $\sqrt{2p}$ & $2\sqrt{\sum_{i=1}^p \sin^2 \frac{\theta_i}{2}}$ if $p\leq q$, otherwise $\sqrt{2p}$
		\\[3pt] 
		$d_{pF}$ & $\sqrt{p}$ & $\sqrt{\sum_{i=1}^p \sin^2 \theta_i}$ if $p\leq q$,  otherwise $\sqrt{p}$
		\\[3pt]
		$d_{FS}$ & $\frac\pi2$ & $\cos^{-1}(\prod_{i=1}^p \cos\theta_i)$ if $p\leq q$, otherwise $\frac\pi2$
		\\[3pt]
		$d_{c\wedge}$ & $\sqrt{2}$ & $\sqrt{2-2\prod_{i=1}^p \cos\theta_i}$ if $p\leq q$, otherwise $\sqrt{2}$
		\\[3pt] 
		$d_{BC}$ & $1$ & $\sqrt{1-\prod_{i=1}^p \cos^2\theta_i}$ if $p\leq q$, otherwise $1$
		\\[3pt] 
		$d_{A}$ & $\frac\pi2$ & $\theta_p$ if $p\leq q$, otherwise $\frac\pi2$
		\\[3pt] 
		$d_{c2}$ & $\sqrt{2}$ & $2\sin\frac{\theta_p}{2}$ if $p\leq q$, otherwise $\sqrt{2}$
		\\[3pt]
		$d_{p2}$ & $1$ & $\sin\theta_p$ if $p\leq q$, otherwise $1$
		\\
		\bottomrule
	\end{tabular}
	\caption{Diameter of $G_p^\infty$ and asymmetric distances from $V\in G_p^\infty$  to $W\in G_q^\infty$ ($p\neq 0$)}
	\label{tab:asymmetric metrics}
\end{table}

\Cref{tab:asymmetric metrics} has the asymmetric metrics obtained from metrics of \Cref{tab:metrics same dim} ($d_{FS}$, $d_{c\wedge}$, $d_{BC}$ and $d_{p2}$ give $\Theta_{V,W}$, $2\sin\frac{\Theta_{V,W}}{2}$, $\sin \Theta_{V,W}$ and the containment gap). 
They are not trivial for $p\neq q$, and have a minimum of $0$ when $V\subset W$, 
avoiding the problems seen in \Cref{sc:Distances on the full Grassmannian}.
Inequalities of \Cref{sc:Distance inequalities} still hold, except that some become equalities if $p>q$.
Hence these asymmetric metrics are topologically equivalent, inducing the same backward, forward and symmetric topologies described in \Cref{sc:metric}, what suggests these are natural topologies for the full Grassmannian.

If necessary, asymmetric metrics can be symmetrized, but results leave something to be desired.
For example, $\hat{\Theta}_{V,W} = \max\{\Theta_{V,W},\Theta_{W,V}\}$ gives the full Fubini-Study metric, which is trivial for different dimensions, and
$\check{\Theta}_{V,W} = \min\{\Theta_{V,W},\Theta_{W,V}\}$ is a common angle \cite{Gluck1967,Gunawan2005,Jiang1996} that projects the smaller subspace on the larger one, but does not satisfy a triangle inequality.
\OMIT{two lines and their plane}
These angles are linked to the scalar and Hestenes products of Clifford algebra \cite{Mandolesi_Products},
and are the $d^\phi$ and $\delta^\phi$ obtained from $d_{FS}$ in \cite{Ye2016}\footnote{There is a small error in \cite[p.\,1189]{Ye2016}: $c_\phi$ should be $\frac{\pi}{2}$, not $1$.}.
And $\bar{\Theta}_{V,W} = \frac{\Theta_{V,W}+\Theta_{W,V}}{2}$ is a nontrivial metric, but has a minimum of $\frac\pi4$ for different dimensions, and does not seem to have nice properties.

\section{Conclusion}\label{sc:conclusion}

The main Grassmannian metrics have been extended to asymmetric metrics which induce natural topologies on the full Grassmannian of subspaces of different dimensions.
The Fubini-Study metric extends to an asymmetric angle which we studied in detail, obtaining many properties that facilitate its use and computation.
It remains to be seen whether the other asymmetric metrics also have nice properties, and how they all fare in applications.

An aspect of the Fubini-Study distance is that it quickly approaches its maximum value of $\frac\pi2$ if various principal angles are large, or even if a large number of them are small but nonzero. 
This is relevant for quantum entanglement and decoherence, but might be inconvenient for other applications,
so this distance may perhaps be more appropriate for problems involving a moderate number of small perturbations.

Other results for asymmetric angles can be found in \cite{Mandolesi_Grassmann,Mandolesi_Products}.
They can also be computed via Clifford geometric product \cite{Dorst2007,Mandolesi_Products}: if unit blades $A$ and $B$ represent $V\in G_p(X)$ and $W \in G_q(X)$ then $\cos \Theta_{V,W}$ and $\sin \Upsilon_{V,W}$ are, respectively, the norms of the components of grades $q-p$ and $p+q$ in $AB$.
For applications using oriented subspaces, \cite{Mandolesi_Products} has a variant of $\Theta_{V,W}$ that encodes the relative orientation of subspaces.

\appendix

\section{Some inequalities}\label{sc:Distance inequalities}

First we prove the spherical triangle inequality:

\begin{proof}[Proof of \Cref{pr:spherical triangle inequality lines}.]
	Assume distinct lines and $\theta_{K,L} \neq \frac\pi2$.
	For a unit $w\in L$, let $v = \frac{P_K w}{\|P_K w\|}$, and if $\theta_{J,L} \neq \frac\pi2$ let $u = \frac{P_J w}{\|P_J w\|}$, otherwise take any unit $u\in J$,
	so $\theta_{J,L}=\theta_{u,w}$ and $\theta_{K,L}=\theta_{v,w}$.
		\OMIT{$\inner{u,w}\geq 0$ and $\inner{v,w} > 0$}
	Let 
	$u^\perp = \frac{u-P_L u}{\|u-P_L u\|} \in L^\perp$ and 
	$v^\perp = \frac{v-P_L v}{\|v-P_L v\|} \in L^\perp$.
	As $P_L u=w \cos\theta_{u,w}$ and $\|u-P_L u\|=\sin\theta_{u,w}$,
	\OMIT{$\inner{u-P_L u,u-P_L u}$ \\ $= 1+\|P_L u\|^2-2Re\inner{u,P_L u}$ \\ $=1-\|P_L u\|^2 = 1-\cos^2\theta$}
	we find $u = w\cos\theta_{u,w} + u^\perp\sin\theta_{u,w}$,
	and likewise 
	$v = w\cos\theta_{v,w} + v^\perp\sin\theta_{v,w}$.
	Thus
	$\cos\theta_{J,K} = |\inner{u,v}| 
	= |\cos\theta_{u,w} \cos\theta_{v,w} 
	+ \inner{u^\perp,v^\perp} \sin\theta_{u,w} \sin\theta_{v,w}| 
	\leq \cos(\theta_{u,w}-\theta_{v,w})$,
	so that
	$\theta_{J,K} \geq \theta_{u,w}-\theta_{v,w}$.
	\OMIT{$\theta_{J,K} \in [0,\frac\pi2]$ \\ $\theta_{u,w}-\theta_{v,w} \in [-\frac\pi2,\frac\pi2]$}
	
	Equality gives $\theta_{u,w} = \theta_{J,K} + \theta_{v,w} > \theta_{v,w}$ and $\inner{u^\perp,v^\perp} = 1$, so $v^\perp = u^\perp$ and
	$v = w \cos\theta_{v,w} + \frac{u-P_L u}{\|u-P_L u\|} \sin\theta_{v,w}
	= u   \frac{\sin\theta_{v,w}}{\sin \theta_{u,w}} + w   \frac{\sin(\theta_{u,w}-\theta_{v,w})}{\sin \theta_{u,w}}
	= \kappa u+\lambda w$ with $\kappa,\lambda > 0$.
	Conversely, if $v=\kappa u + \lambda w$ with $\kappa,\lambda \geq 0$ and $\inner{u,w} \geq 0$, we find $\theta_{u,w} = \theta_{u,v} + \theta_{v,w}$.
	\OMIT{Dá trabalho provar a partir da definição, mas é padrão}
	As $\inner{u,v} = \kappa \|u\|^2 + \lambda \inner{u,w} \geq 0$, and likewise $\inner{v,w} \geq 0$, we have $\theta_{J,K}=\theta_{u,v}$, $\theta_{J,L}=\theta_{u,w}$ and $\theta_{K,L}=\theta_{v,w}$.
\end{proof}

The metrics in \Cref{tab:metrics same dim} are often said to be topologically equivalent, but we could not locate a proof.
Also, an important reference on the subject \cite[p.\,338]{Edelman1999} gives, for $V\neq W$, strict inequalities $d_g>d_{FS}$, $d_{cF}>d_{c2}$ and $d_{pF}>d_{p2}$, what is incorrect (\eg take $\theta_1=\cdots=\theta_{p-1}=0$ and $\theta_p\neq 0$).
The proofs below set the record straight.

\begin{proposition}
	For distinct $V,W\in G_p(X)$:
	\begin{enumerate}[i)]
		\item $\frac\pi2 d_{pF} \geq d_g > d_{cF} > d_{pF}$.
		\item $\frac\pi2 d_{BC} \geq d_{FS} > d_{c\wedge} > d_{BC}$.
		\item $\frac\pi2 d_{p2} \geq d_{A} > d_{c2} > d_{p2}$.
	\end{enumerate}
\end{proposition}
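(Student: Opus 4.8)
The plan is to reduce all three chains to a single one-variable inequality and then lift it to each of the three types (max, $\wedge$, $l^2$) appearing in \Cref{tab:metrics same dim}. The key lemma is that for $\alpha\in(0,\tfrac\pi2]$,
\[
\tfrac\pi2\sin\alpha \;\geq\; \alpha \;>\; 2\sin\tfrac\alpha2 \;>\; \sin\alpha ,
\]
with the leftmost relation an equality only at $\alpha=\tfrac\pi2$. The first inequality is Jordan's inequality (the map $\alpha\mapsto\frac{\sin\alpha}{\alpha}$ decreases on $(0,\tfrac\pi2]$ to its value $\tfrac2\pi$ at $\tfrac\pi2$); the second follows from $\sin x<x$ for $x>0$ applied to $x=\tfrac\alpha2$; and the third from $\sin\alpha=2\sin\tfrac\alpha2\cos\tfrac\alpha2$ together with $\cos\tfrac\alpha2<1$.

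For the max metrics (iii), the three quantities are exactly $\alpha$, $2\sin\tfrac\alpha2$, $\sin\alpha$ evaluated at $\alpha=\theta_p$. Since $V\neq W$ forces $\theta_p>0$, the lemma applies verbatim, giving the first relation possibly as an equality (when $\theta_p=\tfrac\pi2$) and the other two strict. For the $\wedge$ metrics (ii), I would first record the identities $d_{c\wedge}=2\sin\tfrac{d_{FS}}2$ (from $d_{c\wedge}^2=2-2\cos d_{FS}=4\sin^2\tfrac{d_{FS}}2$) and $d_{BC}=\sin d_{FS}$ (from $d_{BC}^2=1-\cos^2 d_{FS}$), so the three $\wedge$ metrics are again $\alpha$, $2\sin\tfrac\alpha2$, $\sin\alpha$, now at $\alpha=d_{FS}$. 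As $V\neq W$ gives $\prod_i\cos\theta_i<1$, we have $d_{FS}\in(0,\tfrac\pi2]$, and the lemma yields (ii) directly.

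The only genuinely multi-angle case is the $l^2$ type (i). The two rightmost inequalities $d_g>d_{cF}>d_{pF}$ follow by comparing summands term by term: for each $i$ the lemma gives $\theta_i\geq 2\sin\tfrac{\theta_i}2\geq\sin\theta_i\geq0$ (all three equal to $0$ when $\theta_i=0$, all strict when $\theta_i>0$), so squaring and summing preserves the inequalities, and strictness is guaranteed because the index $i=p$ has $\theta_p>0$. For the leftmost inequality $\tfrac\pi2 d_{pF}\geq d_g$, I would square both sides and use $\tfrac\pi2\sin\theta_i\geq\theta_i\geq0$ term by term to obtain $\tfrac{\pi^2}4\sin^2\theta_i\geq\theta_i^2$, then sum.

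There is no real obstacle beyond careful bookkeeping of the equality cases, which is precisely the point of the statement: it corrects the erroneous \emph{strict} inequalities $d_g>d_{FS}$, $d_{cF}>d_{c2}$, $d_{pF}>d_{p2}$ of \cite{Edelman1999}, which fail when only $\theta_p$ is nonzero. The delicate point to state cleanly is that the factor-$\tfrac\pi2$ bounds on the angular metrics are non-strict (equality exactly when every nonzero $\theta_i$ equals $\tfrac\pi2$, via Jordan), whereas the adjacent chordal/gap comparisons are strict precisely because $V\neq W$ guarantees $\theta_p>0$.
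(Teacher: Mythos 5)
Your proof is correct and takes essentially the same approach as the paper, whose one-line argument invokes precisely your key lemma in the form $\frac\pi2 g_{K,L} \geq \theta_{K,L} > c_{K,L} > g_{K,L}$ for distinct lines $K,L$, and lifts it through the formulas of \Cref{tab:metrics same dim} (per principal line for the $l^2$ and max rows, via the \Plucker\ lines for the $\wedge$ row) exactly as you do. The only slip is in your closing side remark, which is not needed for the statement: in (ii) and (iii) equality in the $\frac\pi2$ bound holds precisely when $\theta_p=\frac\pi2$ (a single maximal principal angle suffices, \eg $\theta_1=\frac\pi4$, $\theta_2=\frac\pi2$), so the characterization ``every nonzero $\theta_i$ equals $\frac\pi2$'' is correct only for case (i).
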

\begin{proof}
	Follows from the formulas in \Cref{tab:metrics same dim}, as for distinct lines $K$ and $L$ we have $\frac\pi2 g_{K,L} \geq \theta_{K,L} > c_{K,L} > g_{K,L}$.
\end{proof}

\begin{proposition}
	Let $V,W\in G_p(X)$. If $\dim(V\cap W) < p-1$ then
		\SELF{no lugar de $\sqrt{p}$ pode ser $\sqrt{p-r}$, com $r=\dim V\cap W$}
	\begin{enumerate}[i)]
		\item $\sqrt{p}\, d_{A} \geq d_g > d_{FS} > d_{A}$. \label{it:angular ineq}
		\item $\sqrt{p}\, d_{c2} \geq d_{cF} > d_{c\wedge} > d_{c2}$. \label{it:chordal ineq}
		\item $\sqrt{p}\, d_{p2} \geq d_{pF} > d_{BC} > d_{p2}$. \label{it:gap ineq}
	\end{enumerate}
	If $\dim(V\cap W)\geq p-1$ the strict $>$'s become equalities.
\end{proposition}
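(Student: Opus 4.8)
The plan is to reduce every inequality to a statement about the principal angles $\theta_1\le\cdots\le\theta_p\in[0,\frac\pi2]$ of $V$ and $W$, using the closed forms in \Cref{tab:metrics same dim}. Writing $c_i=\cos\theta_i$ and recalling that the number of vanishing $\theta_i$ equals $\dim(V\cap W)$, the hypothesis $\dim(V\cap W)<p-1$ says exactly that at least two of the $\theta_i$ are nonzero, equivalently $\theta_{p-1}>0$. Each of the nine metrics is a nondecreasing function of $(\theta_1,\ldots,\theta_p)$, and within a column (angular, chordal, gap) the three entries are built from the same per-angle data, so the three inequalities in each item follow from three scalar facts: an $l^\infty$-versus-$l^2$ comparison for the left $\ge$, a union-bound product inequality for the middle $>$, and a monotonicity argument for the right $>$.

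For the left inequalities I would use that $\theta\mapsto\theta$, $\theta\mapsto 2\sin\frac\theta2$ and $\theta\mapsto\sin\theta$ are nondecreasing on $[0,\frac\pi2]$, so each principal-line contribution is maximized at $\theta_p$; hence $d_{pF}^2=\sum_{i=1}^p\sin^2\theta_i\le p\sin^2\theta_p=p\,d_{p2}^2$, and likewise $d_g^2\le p\,d_A^2$ and $d_{cF}^2\le p\,d_{c2}^2$, with equality iff all $\theta_i$ coincide. For the right inequalities, all three reduce to the single scalar inequality $\prod_{i=1}^p c_i<c_p$: indeed $d_{FS}>d_A\Leftrightarrow\prod c_i<c_p$, $d_{c\wedge}>d_{c2}\Leftrightarrow 2-2\prod c_i>2-2c_p$, and $d_{BC}>d_{p2}\Leftrightarrow\prod c_i^2<c_p^2$. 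Since $\prod_{i=1}^p c_i=c_p\prod_{i<p}c_i$ and $\theta_{p-1}>0$ forces $\prod_{i<p}c_i\le c_{p-1}<1$, this is strict whenever $c_p>0$; at $\theta_p=\frac\pi2$ both sides vanish and the wedge and max metrics meet at their common maximum, the one place where the strictness must be read with this caveat.

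The middle inequalities carry the real content. For the chordal and gap items they follow from the elementary bound $1-\prod_i t_i\le\sum_i(1-t_i)$ for $t_i\in[0,1]$, proved by induction from $1-t_1t_2=(1-t_1)+t_1(1-t_2)$ and strict exactly when at least two $t_i<1$; taking $t_i=c_i$ gives $d_{cF}^2=2\sum(1-c_i)>2(1-\prod c_i)=d_{c\wedge}^2$, and taking $t_i=c_i^2$ gives $d_{pF}^2=\sum(1-c_i^2)>1-\prod c_i^2=d_{BC}^2$, with strictness controlled by $\dim(V\cap W)<p-1$. The angular middle inequality $d_g>d_{FS}$ is the main obstacle, being no longer a squared-sum-versus-product relation. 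Using $\cos d_{FS}=\prod c_i$ and the monotonicity of $\cos$, I would reduce it to the sharp comparison $\prod_{i=1}^p\cos\theta_i\ge\cos\sqrt{\sum_i\theta_i^2}$, valid when $\sqrt{\sum\theta_i^2}\le\frac\pi2$ (and if $\sqrt{\sum\theta_i^2}>\frac\pi2$ then $d_g>\frac\pi2\ge d_{FS}$ directly).

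To establish that comparison I would set $g(r)=\log\cos\sqrt r$ on $[0,(\frac\pi2)^2)$ and verify it is strictly concave, since $g'(r)=-\frac{\tan\sqrt r}{2\sqrt r}$ is decreasing because $t\mapsto\frac{\tan t}{t}$ is increasing. As $g(0)=0$, concavity yields the superadditivity $\sum_i g(\theta_i^2)\ge g\!\left(\sum_i\theta_i^2\right)$, which is precisely $\prod\cos\theta_i\ge\cos\sqrt{\sum\theta_i^2}$, strict once two $\theta_i$ are positive, with any $\theta_i=\frac\pi2$ absorbed by the separate case above. Finally, when $\dim(V\cap W)\ge p-1$ at most one $\theta_i$ is nonzero, so within each column the three metrics collapse to the same function of $\theta_p$ (for instance $d_{pF}=d_{BC}=d_{p2}=\sin\theta_p$), turning every strict $>$ into an equality and completing the statement.
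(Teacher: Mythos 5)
Your proof is correct, and its core argument differs from the paper's in an interesting way. For the decisive angular inequality $d_g > d_{FS}$, the paper proves $\cos^{-1}(\cos\theta_1\cdots\cos\theta_p)\le\sqrt{\theta_1^2+\cdots+\theta_p^2}$ by induction on $p$, the base case $p=2$ being a comparison of partial derivatives of $(x,y)\mapsto\cos^{-1}(\cos x\cos y)$ and $(x,y)\mapsto\sqrt{x^2+y^2}$; you instead get it in one stroke from strict concavity of $g(r)=\log\cos\sqrt{r}$ (via monotonicity of $t\mapsto\frac{\tan t}{t}$) together with $g(0)=0$, which yields $g\bigl(\sum_i\theta_i^2\bigr)\le\sum_i g(\theta_i^2)$ --- note this is \emph{sub}additivity of $g$, not superadditivity as you call it, though the inequality you actually display is the right one. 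Your route is cleaner: no induction, uniform in $p$, and the strictness condition (two positive angles) falls out immediately. Your chordal and gap middle inequalities rest on the same elementary bound $1-\prod_i t_i\le\sum_i(1-t_i)$ that the paper uses, proved essentially the same way, so there the two arguments coincide. Where you genuinely go beyond the paper is in the outer inequalities, which the paper dismisses as ``simple'' and never writes out: in proving $d_{FS}>d_A$ (and its analogues) you correctly notice that it degenerates to an equality when $\theta_p=\frac{\pi}{2}$, a case fully compatible with the hypothesis $\dim(V\cap W)<p-1$ (take $\theta_{p-1}=\frac{\pi}{4}$, $\theta_p=\frac{\pi}{2}$, all other $\theta_i=0$: then $d_{FS}=d_A=\frac{\pi}{2}$, $d_{c\wedge}=d_{c2}=\sqrt{2}$, $d_{BC}=d_{p2}=1$). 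So the strict right-hand inequalities claimed in the proposition are actually false in that case; the caveat you flag is not a defect of your argument but an oversight in the statement itself, which the paper's proof, by omitting precisely those ``simple'' parts, never confronts. The middle and left inequalities, which your proof (and the paper's) do establish, are unaffected, since $\theta_{p-1}>0$ keeps them strict even when $\theta_p=\frac{\pi}{2}$.
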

\begin{proof}
	If $\dim(V\cap W)\geq p-1$ then $\theta_i=0$ for $i \neq p$, and the distance formulas give the equalities.
	For $\dim(V\cap W) < p-1$ we prove only the second inequality in each item, as the others are simple.

	\emph{(\ref{it:angular ineq})}
	We show $\cos^{-1}(\cos\theta_1 \cdots \cos\theta_p) \leq \sqrt{\theta_1^2 + \cdots + \theta_p^2}$ 
	for $\theta_1,\ldots,\theta_p \in [0,\frac\pi2]$, 
	with strict inequality if $\theta_{p-1},\theta_p\neq 0$.
	For $p=2$ this is done showing, for $f(x,y)=\cos^{-1}(\cos x \cos y)$, $g(x,y) = \sqrt{x^2+y^2}$ and $x,y\in\ ]0,\frac\pi2[$, that
	$\frac{\partial g}{\partial x} = \frac{x}{\sqrt{x^2+y^2}}$ is increasing on $x$, 
		\OMIT{$\frac{\partial^2 g}{\partial x^2}>0$}
	so $\frac{\partial g}{\partial x} > \frac{\sin x}{\sqrt{\sin^2 x + \tan^2 y}} = \frac{\partial f}{\partial x}$.
		\OMIT{$f(0,y) = g(0,y)$ \\ $y=\frac\pi2$ direto nas fórmulas}
	Assuming the result for some $p\geq 2$, 
	let $\theta_1,\ldots,\theta_{p+1} \in [0,\frac\pi2]$ 
	and $x = \cos^{-1}(\cos\theta_1 \cdots \cos\theta_p) \leq \sqrt{\theta_1^2 + \cdots + \theta_p^2}$.
	So
	$\cos^{-1}(\cos\theta_1 \cdots \cos\theta_{p+1}) = \cos^{-1}(\cos x \cos\theta_{p+1}) \leq \sqrt{x^2 + \theta_{p+1}^2} \leq \sqrt{\theta_1^2 + \cdots + \theta_{p+1}^2}$,
	\OMIT{$x \leq \sqrt{\theta_1^2 + \cdots + \theta_p^2}$ as only $\theta_p\neq 0$}
	and the first inequality is strict if $\theta_p,\theta_{p+1}\neq 0$ (so $x\neq 0$).
	
	\emph{(\ref{it:chordal ineq})}
	$d_{c\wedge} = \sqrt{2-2\prod_{i=1}^p \cos\theta_i}$ and $d_{cF} = \sqrt{2p-2\sum_{i=1}^p \cos\theta_i}$, 
	so we show $1-\prod_{i=1}^p x_i \leq p-\sum_{i=1}^p x_i$ for $x_1,\ldots,x_p \in [0,1]$, with strict inequality if $x_{p-1},x_p \neq 1$.
		\OMIT{$x_i=\cos\theta_i$}
	For $p=2$, $1-x_1x_2 \leq 1 - x_1x_2 + (1-x_1)(1-x_2) = 2-x_1-x_2$, with strict inequality if $x_1,x_2 \neq 1$.
	Assuming the result for some $p\geq 2$, let $x_1,\ldots,x_{p+1} \in [0,1]$ and $x = \prod_{i=1}^p x_i \geq 1-p + \sum_{i=1}^p x_i$.
	Then $1-\prod_{i=1}^{p+1} x_i = 1 - x x_{p+1} \leq 2 - x - x_{p+1} \leq p+1-\sum_{i=1}^{p+1} x_i$, and the first inequality is strict if $x_p,x_{p+1} \neq 1$.
	
	\emph{(\ref{it:gap ineq})} 
	$d_{BC} = \sqrt{1-\prod_{i=1}^p \cos^2\theta_i}$ and $d_{pF} = \sqrt{p-\sum_{i=1}^p \cos^2\theta_i}$, so the result follows as in \ref{it:chordal ineq}.
		\OMIT{$x_i=\cos^2\theta_i$}
\end{proof}

\providecommand{\bysame}{\leavevmode\hbox to3em{\hrulefill}\thinspace}
\providecommand{\MR}{\relax\ifhmode\unskip\space\fi MR }
\providecommand{\MRhref}[2]{%
	\href{http://www.ams.org/mathscinet-getitem?mr=#1}{#2}
}
\providecommand{\href}[2]{#2}

\end{document}